\numberwithin{equation}{section}
\theoremstyle{definition}
\newtheorem{theorem}{Theorem}[section]
\newtheorem{lemma}{Lemma}[section]
\newtheorem{proposition}{Proposition}[section]
\newtheorem{definition}{Definition}[section]
\newtheorem{corollary}{Corollary}[section]
\newtheorem{remark}{Remark}[section]
\newtheorem{example}{Example}[section]
\newcommand{\K}{\mathcal{K}}
\newcommand{\U}{\mathrm{U}}
\newcommand{\N}{\mathbb{N}}
\newcommand{\Z}{\mathbb{Z}}
\newcommand{\C}{\mathbb{C}}
\newcommand{\Cu}{\mathrm{Cu}}
\newcommand{\CCu}{\mathbf{Cu}}
\newcommand{\Hom}{\mathrm{Hom}}
\newcommand{\Aut}{\mathrm{Aut}}
\newcommand{\id}{\mathrm{id}}
\newcommand{\Ad}{\mathrm{Ad}}
\newcommand{\M}{\mathrm{M}}
\title[]{Equivariant *-homomorphisms, Rokhlin constraints and equivariant UHF-absorption}
\author{Eusebio Gardella}
\email{gardella@uoregon.edu}
\address{Department of Mathematics, Fenton Hall, University  of Oregon, Eugene OR 97403-1222, USA.}
\author{Luis Santiago}
\email{lmoreno@abdn.ac.uk}
\address{Institute of Mathematics, University of Aberdeen, Fraser Noble Building, Aberdeen AB24 3UE, UK}
\date{\today}
\begin{document}
\maketitle

\begin{abstract}
We classify equivariant *-homomorphisms between C*-dynamical systems associated to actions of finite groups with the Rokhlin property. In addition, the given actions are classified. An obstruction is obtained for the Cuntz semigroup of a C*-algebra allowing such an action. We also obtain an equivariant UHF-absorption result.
\end{abstract}

\tableofcontents

\section{Introduction}

Classification is a major subject in all areas of mathematics and has attracted the attention of many talented mathematicians. In the category
of C*-algebras, the program of classifying all amenable C*-algebras was initiated by Elliott, first with the classification of AF-algebras,
and later with the classification of certain simple C*-algebras of real rank zero. His work was followed by many other classification results for
nuclear C*-algebras, both in the stably finite and the purely infinite case.

The classification theory for von Neumann algebras precedes the classification program initiated by Elliott. In fact, the classification of
amenable von Neumann algebras with separable pre-dual, which is due to Connes, Haagerup, Krieger and Takesaki, was completed more than 30 years
ago. Connes moreover classified automorphisms of the type II$_1$ factor up to cocycle conjugacy in \cite{Connes-I}. This can be regarded as the
first classification result for actions on von Neumann algebras, and it was followed by his own work on the classification of pointwise outer
actions of amenable groups on von Neumann algebras in \cite{Connes-II}.

Several people have since then tried to obtain similar classification results for actions on C*-algebras. Early results in this direction
include the work of Herman and Ocneanu in \cite{Herman-Ocneanu} on integer actions with the Rokhlin property on UHF-algebras, the  work of Fack and
Mar\'echal in \cite{Fack-Marechal-I} and \cite{Fack-Marechal-II} for cyclic groups actions on UHF-algebras, and the work of Handelman and
Rossmann \cite{Handelman-Rossmann} for locally representable compact group actions on AF-algebras. Other results have been obtained by Elliott
and Su in \cite{Elliott-Su} for direct limit actions of $\Z_2$ on AF-algebras, and by Izumi in \cite{Izumi-I} and \cite{Izumi-II}, where he
proved a number of classification results for actions of finite groups on arbitrary unital separable C*-algebras with the Rokhlin property, as well as for approximately representable actions. The classification result of Izumi regarding actions with the Rokhlin property has been extended recently by Nawata in \cite{Nawata} to cover actions on certain not-necessarily unital separable C*-algebras (specifically for algebras $A$ such that $A\subseteq \overline{\mathrm{GL}(\widetilde{A})}$). It should
be emphasized that the classification of group actions on C*-algebras is a far less developed subject than the classification of C*-algebras
and even farther less developed than the classification of group actions on von Neumann algebras.

In this paper we extend the classification results of Izumi and Nawata of finite  group actions on C*-algebras with the Rokhlin property to actions of finite groups with the Rokhlin property on arbitrary separable C*-algebras. This is done by first obtaining a classification result for equivariant *-homomorphism between C*-dynamical systems associated to actions of finite groups with the Rokhlin property, and then applying Elliott's intertwining argument. In this paper we also obtain obstructions on the Cuntz semigroup, the Murray-von Neumann semigroup, and the K-groups of a C*-algebra allowing an action of a finite group with the Rokhlin property. These results are used together with the classification result of actions to obtain an equivariant UHF-absorption result.

This paper is organized as follows. In Section 2, we collect a number of definitions and results that will be used throughout the paper. In Section 3, we give an abstract classification for equivariant *-homomorphism between C*-dynamical systems associated to actions of finite groups with the Rokhlin property, as well as, a classification for the given actions. These abstract classification results are used together with known classification results of C*-algebras to obtain specific classification of equivariant *-homomorphisms and actions of finite groups on C*-algebras that can be written as inductive limits of 1-dimensional NCCW-complexes with trivial K$_1$-groups and for unital simple AH-algebras of no dimension growth.

In Section 4, we obtain obstructions on the Cuntz semigroup, the Murray-von Neumann semigroup, and the K$_*$-groups of a C*-algebra allowing an action of a finite group with the Rokhlin property. Then using the Cuntz semigroup obstruction we show that the Cuntz semigroup of a C*-algebra that admits an action of finite group with the Rokhlin property has certain divisibility property. In this section we also compute the Cuntz semigroup, the Murray-von Neumann semigroup, and the K$_*$-groups of the fixed-point and crossed product C*-algebras associated to an action of a finite group with the Rokhlin property.

In Section 5, we obtain divisibility results for the Cuntz semigroup of certain classes of C*-algebras and use this together with the classification results for actions obtained in Section 3 to prove an equivariant UHF-absorbing result.


\section{Preliminary definitions and results}
Let $A$ be a C*-algebra. We denote by $\M(A)$ its multiplier algebra, by $\widetilde{A}$ its unitization (that is, the C*-algebra obtained by adjoining a unit to $A$, even if $A$ is unital). If $A$ is unital, we denote by $\U(A)$ its unitary group. We denote by $\Aut(A)$ the automorphism group of $A$. The identity map of $A$ is denoted $\mathrm{id}_A$.

Topological groups are always assumed to be Hausdorff. If $G$ is a locally compact group and $A$ is a C*-algebra, then an \emph{action} of $G$ on $A$ is a strongly continuous group homomorphism $\alpha\colon G\to\Aut(A)$. Strong continuity for $\alpha$ means that for each $a$ in $A$, the map from $G$ to $A$ given by $g\mapsto\alpha_g(a)$ is continuous with respect to the norm topology on $A$.

We denote by $\K$ the C*-algebra of compact operators on a separable Hilbert space. We take $\N=\{1,2,\ldots\}$, $\mathbb{Z}_+=\{0,1,2,\ldots\}$, and $\overline{\mathbb{Z}_+}=\mathbb{Z}_+\cup \{\infty\}$.

\subsection{The Rokhlin property for finite group actions}\label{Rokhlin}
Let us briefly recall the definition of the Rokhlin property, in the sense of \cite[Definition 2]{SantiagoRP}, for actions of
finite groups on (not necessarily unital) C*-algebras. Actions with the Rokhlin property are the main object of study of this work.

\begin{definition}\label{def: Rokhlin}
Let $A$ be a C*-algebra and let $\alpha\colon G\to \Aut(A)$ be an action of a finite group $G$ on $A$. We say that $\alpha$ has
the \emph{Rokhlin property} if for any $\varepsilon>0$ and any finite subset $F\subseteq A$ there exist mutually orthogonal positive
contractions $r_g$ in $A$, for $g\in G$, such that
\begin{itemize}
\item[(i)] $\|\alpha_g(r_h)-r_{gh}\|<\varepsilon$ for all $g,h\in G$;
\item[(ii)] $\|r_ga-ar_g\|<\varepsilon$ for all $a\in F$ and all $g\in G$;
\item[(iii)] $\|(\sum_{g\in G} r_g)a-a\|<\varepsilon$ for all $a\in F$.
\end{itemize}
The elements $r_g$, for $g\in G$, will be called {\it Rokhlin elements} for $\alpha$ for the choices of $\varepsilon$ and $F$.
\end{definition}

It was shown in \cite[Corollary 1]{SantiagoRP} that Definition \ref{def: Rokhlin} agrees with \cite[Definition 3.1]{Izumi-I} whenever
the C*-algebra $A$ is unital. It is also shown in \cite[Corollary 2]{SantiagoRP} that Definition \ref{def: Rokhlin} agrees with
\cite[Definition 3.1]{Nawata} whenever the C*-algebra $A$ is separable.

If $A$ is a C*-algebra, we denote by $\ell^\infty(\N,A)$ the set of all bounded sequences $(a_n)_{n\in\N}$ in $A$ with the supremum norm $\|(a_n)_{n\in\N}\|=\sup\limits_{n\in\N}\|a_n\|$,
and pointwise operations. Then $\ell^\infty(\N,A)$ is a C*-algebra, and it is unital when $A$ is (the unit being the constant sequence $1_A$). Let
$$c_0(\N,A)=\left\{(a_n)_{n\in\N}\in\ell^\infty(\N,A)\colon \lim\limits_{n\to\infty}\|a_n\|=0\right\}.$$
Then $c_0(\N,A)$ is an ideal in $\ell^\infty(\N,A)$, and we denote the quotient
$$\ell^\infty(\N,A)/c_0(\N,A)$$
by $A^\infty$, which we call the \emph{sequence algebra} of $A$.

Write $\pi_A\colon \ell^\infty(\N,A)\to A^\infty$ for the quotient map, and identify $A$ with the subalgebra of $\ell^\infty(\N,A)$
consisting of the constant sequences, and with the subalgebra of $A^\infty$ by taking its image under $\pi_A$. We write $A_\infty=A^\infty\cap A'$
for the relative commutant of $A$ inside of $A^\infty$, and call it the \emph{central sequence algebra} of $A$.

Let $G$ be a finite group and let $\alpha\colon G\to\Aut(A)$ be an action of $G$ on $A$. Then there are actions of $G$ on $A^\infty$ and
$A_\infty$ which, for simplicity and ease of notation, and unless confusion is likely to arise, we denote simply by $\alpha$.

The following is a characterization of the Rokhlin property in terms of elements of the sequence algebra $A^\infty$ (\cite[Proposition 1]{SantiagoRP}):

\begin{lemma}\label{lem: Rokhlin equivalence}
Let $A$ be a C*-algebra and let $\alpha\colon G\to \Aut(A)$ be an action of a finite group $G$ on $A$. Then the following are equivalent:
\begin{itemize}
\item[(i)] $\alpha$ has the  Rokhlin property.
\item[(ii)] For any finite subset $F\subseteq A$  there exist mutually orthogonal positive contractions $r_g$ in $A^\infty\cap F'$, for $g\in G$, such that
\begin{itemize}
\item[(a)] $\alpha_g(r_h)=r_{gh}$ for all $g,h\in G$;
\item[(b)] $(\sum_{g\in G} r_g)b=b$ for all $b\in F$.
\end{itemize}
\item[(iii)] For any separable C*-subalgebra $B\subseteq A$ there are orthogonal positive contractions $r_g$ in $A^\infty\cap B'$ for $g\in G$
such that
\begin{itemize}
\item[(a)] $\alpha_g(r_h)=r_{gh}$ for all $g,h\in G$;
\item[(b)] $(\sum_{g\in G} r_g)b=b$ for all $b\in B$.
\end{itemize}
\end{itemize}
\end{lemma}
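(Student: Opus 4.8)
The plan is to prove the cycle of implications (iii) $\Rightarrow$ (ii) $\Rightarrow$ (i) $\Rightarrow$ (iii), since (iii) $\Rightarrow$ (ii) is immediate (take $B$ to be the C*-algebra generated by $F$) and (ii) $\Rightarrow$ (i) is a routine reindexing argument. For (ii) $\Rightarrow$ (i): given $\varepsilon>0$ and a finite set $F\subseteq A$, apply (ii) to obtain orthogonal positive contractions $r_g\in A^\infty\cap F'$ satisfying (a) and (b), lift them to bounded sequences $(r_g^{(n)})_{n\in\N}$ in $\ell^\infty(\N,A)$ of positive contractions (using that positive contractions lift to positive contractions, and that orthogonality can be arranged on a lift after multiplying by a suitable positive function of the sum, by a standard functional-calculus trick), and then observe that conditions (i)(i)--(i)(iii) of Definition \ref{def: Rokhlin} translate into norm-estimates that hold in the quotient $A^\infty$ by (a) and (b); hence they hold approximately for $r_g^{(n)}$ for $n$ large enough, and any such $n$ gives the desired Rokhlin elements.

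The substantive implication is (i) $\Rightarrow$ (iii). First I would reduce condition (i) to a reformulation inside $A^\infty$: if $\alpha$ has the Rokhlin property, then for every finite $F\subseteq A$ and every $\varepsilon>0$ there are Rokhlin elements, and a standard diagonal-sequence argument over an increasing sequence of finite sets $F_1\subseteq F_2\subseteq\cdots$ with dense union in a fixed separable $B\subseteq A$, together with $\varepsilon_n\to 0$, produces orthogonal positive contractions $s_g\in A^\infty$ with $s_g b=bs_g$ and $(\sum_g s_g)b=b$ for all $b\in B$, and $\alpha_g(s_h)=s_{gh}$ — but here the exact equalities (a), (b) and the exact commutation $s_g\in B'$ need to be upgraded from the approximate versions. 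The mechanism for this upgrade is the key technical point: because $B$ is separable, one uses a reindexation/absorption argument in the sequence algebra (the fact that $A^\infty\cap B'$ is itself a C*-algebra into which approximately central, approximately multiplicative data can be perfected) to pass from approximate relations holding modulo $c_0(\N,A)$ to exact relations. Concretely, starting from a sequence $(r_g^{(n)})_n$ representing elements that are only approximately central, approximately $\alpha$-equivariant, and approximately summing to a local unit, one applies a standard "$\varepsilon$-test" or successive correction argument to replace it by a sequence whose image in $A^\infty$ lies exactly in $B'$, is exactly permuted by $\alpha$, and exactly satisfies (b) on $B$.

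The main obstacle I anticipate is precisely this last upgrade, in two respects. First, arranging \emph{exact} orthogonality of the $r_g$ in $A^\infty$: the naive lifts are only approximately orthogonal, and one must cut down by functional calculus applied to $\sum_g r_g$ (or use that in the sequence algebra approximately orthogonal positive elements are orthogonal modulo $c_0$) without destroying equivariance — equivariance is preserved because $\alpha_g$ commutes with continuous functional calculus and permutes the $r_h$. Second, obtaining exact equivariance $\alpha_g(s_h)=s_{gh}$ rather than $\|\alpha_g(s_h)-s_{gh}\|$ small: here one symmetrizes by averaging over $G$, replacing a given $s_e$ by a family indexed by $G$ built from the orbit, but one must check that symmetrization preserves the near-unit property (b) and does not reintroduce non-orthogonality — again handled by a further functional-calculus cut-down after symmetrizing. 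None of these steps is deep, but chaining them so that each correction does not spoil the relations already achieved is where the care lies; this is exactly the content cited as \cite[Proposition 1]{SantiagoRP}, and I would structure the proof to isolate each correction as its own short paragraph.
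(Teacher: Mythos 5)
The paper does not prove this lemma at all: it is quoted verbatim from \cite[Proposition 1]{SantiagoRP}, so there is no in-paper argument to compare against. Your cycle (iii) $\Rightarrow$ (ii) $\Rightarrow$ (i) $\Rightarrow$ (iii) is the standard and correct route, and the plan would go through. Two remarks on emphasis. First, you locate the ``key technical point'' of (i) $\Rightarrow$ (iii) in upgrading approximate relations to exact ones, and you propose an $\varepsilon$-test/reindexation/successive-correction apparatus for it; none of that is needed. Along the diagonal sequence built from $F_n$ and $\varepsilon_n=1/n$, every defect --- $\alpha_g(r_h^{(n)})-r_{gh}^{(n)}$, $[r_g^{(n)},b]$, $(\sum_g r_g^{(n)})b-b$ --- has norm tending to $0$, hence vanishes exactly in the quotient $A^\infty=\ell^\infty(\N,A)/c_0(\N,A)$; exact mutual orthogonality is inherited directly since Definition \ref{def: Rokhlin} already provides exactly orthogonal elements of $A$; and the relations extend from the dense union $\bigcup_n F_n$ to all of $B$ because the set of $b$ satisfying them is a closed subalgebra. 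Second, the one place where a genuine (though standard) lifting fact is required is the direction you treat as routine, namely (ii) $\Rightarrow$ (i): Definition \ref{def: Rokhlin} demands \emph{exactly} mutually orthogonal positive contractions in $A$, so you must lift a mutually orthogonal family from $A^\infty$ to a mutually orthogonal family in $\ell^\infty(\N,A)$. Your ``multiply by a function of the sum'' heuristic is not obviously sufficient as stated; the clean statement to invoke is the projectivity of $C_0\bigl((0,1]\bigr)\otimes \C^{|G|}$ (mutually orthogonal positive contractions lift to mutually orthogonal positive contractions), after which choosing $n$ large gives the required Rokhlin elements.
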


The first part of the following proposition is \cite[Theorem 2 (i)]{SantiagoRP}. The second part follows trivially from the definition of the
Rokhlin property.

\begin{proposition}\label{Rp properties}
Let $G$ be a finite group, let $A$ be a C*-algebra, and let $\alpha\colon G\to\Aut(A)$ be an action with the Rokhlin property.
\begin{enumerate}
\item If $B$ is any C*-algebra and $\beta\colon G\to\Aut(B)$ is any action of $G$ on $B$, then the action
$$\alpha\otimes\beta\colon G\to\Aut(A\otimes_{\mathrm{min}} B)$$
defined by $(\alpha\otimes\beta)_g=\alpha_g\otimes\beta_g$ for all $g\in G$, has the Rokhlin property.

\item If $B$ is a C*-algebra and $\varphi\colon A\to B$ is an isomorphism, then the action $g\mapsto \varphi\circ\alpha_g\circ\varphi^{-1}$ of $G$
on $B$ has the Rokhlin property.

\end{enumerate}
\end{proposition}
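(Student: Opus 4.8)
The plan is to verify the $\varepsilon$–$F$ formulation of Definition \ref{def: Rokhlin} directly for part (1), and to transport Rokhlin elements through $\varphi$ for part (2). For (1), fix $\varepsilon>0$ and a finite subset $F\subseteq A\otimes_{\mathrm{min}}B$. Approximating each element of $F$ by a finite sum of elementary tensors and using that $\sum_{g\in G}r_g$ will be a positive contraction whenever the $r_g$ are mutually orthogonal positive contractions, one reduces to producing Rokhlin elements that work simultaneously, up to a small error, for tensors $a\otimes b$ with $a$ ranging over a finite set $F_A\subseteq A$ of contractions and $b$ over a finite set $F_B\subseteq B$ of contractions.

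The key device is a positive contraction $t\in B$ that is exactly $G$-invariant, and both approximately central for and an approximate unit for $F_B$. To build it, take a quasicentral approximate identity $(u_\lambda)_\lambda$ for $B$ (Arveson) and set $t_\lambda=\tfrac{1}{|G|}\sum_{g\in G}\beta_g(u_\lambda)$. Then each $t_\lambda$ is a positive contraction with $\beta_h(t_\lambda)=t_\lambda$ for every $h\in G$, and since each $\beta_g(u_\lambda)$ is again a quasicentral approximate identity for $B$, for $\lambda$ large enough $t:=t_\lambda$ satisfies $\|tb-b\|$ and $\|tb-bt\|$ as small as we wish for all $b\in F_B$. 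Now choose Rokhlin elements $s_g\in A$ for $\alpha$ with respect to $F_A$ and a small tolerance $\varepsilon'$, and put $r_g:=s_g\otimes t$. Mutual orthogonality and the positivity/contractivity of the $r_g$ are immediate. Condition (i) collapses, using $\beta_g(t)=t$, to $\|(\alpha\otimes\beta)_g(r_h)-r_{gh}\|\le\|\alpha_g(s_h)-s_{gh}\|<\varepsilon'$; conditions (ii) and (iii) follow by expanding $r_g(a\otimes b)-(a\otimes b)r_g$ and $(\sum_g r_g)(a\otimes b)-a\otimes b$ into two telescoping terms and applying the estimates $\|s_g a-as_g\|<\varepsilon'$, $\|(\sum_g s_g)a-a\|<\varepsilon'$ together with the smallness of $\|tb-bt\|$ and $\|tb-b\|$. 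Choosing $\varepsilon'$ and $\lambda$ small enough makes all three estimates smaller than $\varepsilon$ on $F$. (Alternatively one can run the same argument inside $(A\otimes_{\mathrm{min}}B)^\infty$ via Lemma \ref{lem: Rokhlin equivalence}, replacing $t$ by a $G$-invariant element of $B^\infty$ acting as a unit on a chosen separable subalgebra.)

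For (2), set $\gamma_g=\varphi\circ\alpha_g\circ\varphi^{-1}$, fix $\varepsilon>0$ and a finite subset $F'\subseteq B$, apply the Rokhlin property of $\alpha$ to $\varepsilon$ and $F:=\varphi^{-1}(F')$ to get Rokhlin elements $r_g\in A$, and put $r_g':=\varphi(r_g)$. Since $\varphi$ is an isometric $*$-isomorphism it preserves positivity, norms, orthogonality and sums, and $\gamma_g(\varphi(r_h))=\varphi(\alpha_g(r_h))$, so (i)–(iii) for $(r_g')$ relative to $(\gamma,F')$ follow verbatim from (i)–(iii) for $(r_g)$ relative to $(\alpha,F)$.

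\textbf{Main obstacle.} Essentially everything is $\varepsilon$-bookkeeping; the one point needing care is the construction of $t$ in part (1): since $B$ need not be unital one cannot simply tensor the Rokhlin elements of $A$ with $1_B$, and one must exhibit an approximately central, approximately unital, \emph{exactly $G$-invariant} positive contraction in $B$. This is precisely where finiteness of $G$ is used, through the averaging $t_\lambda=\tfrac{1}{|G|}\sum_{g\in G}\beta_g(u_\lambda)$.
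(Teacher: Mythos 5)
Your proof is correct. Note that the paper itself supplies no argument here: part (1) is quoted from \cite[Theorem 2 (i)]{SantiagoRP} and part (2) is declared to follow trivially from the definition, so there is no in-paper proof to compare against. Your direct verification is sound: part (2) is indeed the routine transport of Rokhlin elements through the isomorphism, and for part (1) the essential point you isolate --- that $B$ need not be unital, so one cannot take $s_g\otimes 1_B$ --- is handled correctly by averaging a quasicentral approximate unit over the $G$-orbit to get an exactly $\beta$-invariant positive contraction $t$ that is approximately central and approximately a unit on the relevant finite set; the estimates for (i)--(iii) with $r_g=s_g\otimes t$ then go through exactly as you describe.
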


The following example may be regarded as the ``generating" Rokhlin action for a given finite group $G$. For some classes of C*-algebras, it can be shown that every action of $G$ with the Rokhlin property tensorially absorbs the action we construct below. See \cite[Theorems 3.4 and 3.5]{Izumi-II} and Theorem \ref{thm: Rp action absorbs model action} below.

\begin{example}\label{eg: model action} Let $G$ be a finite group. Let $\lambda\colon G\to  U(\ell^2(G))$ be the left regular representation, and identify $\ell^2(G)$ with $\C^{|G|}$. Define an action $\mu^G\colon G\to \Aut(\mathrm M_{|G|^\infty})$ by
$$\mu^G_g=\bigotimes_{n=1}^\infty \Ad(\lambda_g)$$
for all $g\in G$. It is easy to check that $\alpha$ has the Rokhlin property. Note that $\mu^G_g$ is approximately inner for all $g\in G$. \end{example}

It follows from part (i) of Proposition \ref{Rp properties} that any action of the form $\alpha\otimes\mu^G$ has the Rokhlin property.
One of our main results, Theorem \ref{thm: Rp action absorbs model action}, states that in some circumstances, every action with the Rokhlin
property has this form.

\subsection{The category $\CCu$, the Cuntz semigroup, and the $\Cu^\sim$-semigroup}

In this subsection, we will recall the definitions of the Cuntz and $\Cu^\sim$ semigroups, as well as the category $\CCu$, to which these semigroups naturally belong.

\subsubsection{The category $\CCu$}
Let $S$ be an ordered semigroup and let $s,t\in S$. We say that $s$ is \emph{compactly contained} in $t$, and denote this by $s\ll t$,
if whenever $(t_n)_{n\in\N}$ is an increasing sequence in $S$ such that $t\le \sup\limits_{n\in\N} t_n$, there exists $k\in \N$ such that $s\le t_k$.
 A sequence $(s_n)_{n\in\N}$ is said to be \emph{rapidly increasing} if $s_n\ll s_{n+1}$ for all $n\in \N$.

\begin{definition}\label{def: CCu} An ordered abelian semigroup $S$ is an object in the category $\CCu$ if it has a zero element and it satisfies the following properties:
\begin{itemize}
\item[(O1)] Every increasing sequence in $S$ has a supremum;
\item[(O2)] For every $s\in S$ there exists a rapidly increasing sequence $(s_n)_{n\in\N}$ in $S$ such that $s=\sup\limits_{n\in\N} s_n$.
\item[(O3)] If $(s_n)_{n\in\N}$ and $(t_n)_{n\in\N}$ are increasing sequences in $S$, then
$$\sup\limits_{n\in\N} s_n +\sup\limits_{n\in\N} t_n=\sup\limits_{n\in\N} (s_n+t_n);$$
\item[(O4)] If $s_1,s_2,t_1,t_2\in S$ are such that $s_1\ll t_1$ and $s_2\ll t_2$, then $s_1+s_2\ll t_1+t_2$.
\end{itemize}

Let $S$ and $T$ be semigroups in the category $\CCu$. An order-preserving semigroup map $\varphi\colon S\to T$ is a morphism in the category $\CCu$ if it preserves the zero element and it satisfies the following properties:
\begin{itemize}
\item[(M1)] If $(s_n)_{n\in\N}$ is an increasing sequence in $S$, then
$$\varphi\left(\sup\limits_{n\in\N} s_n\right)=\sup\limits_{n\in\N} \varphi(s_n);$$
\item[(M2)] If $s, t\in S$ are such that $s\ll t$, then $\varphi(s)\ll \varphi(t)$.
\end{itemize}
\end{definition}

It is shown in \cite[Theorem 2]{Coward-Elliott-Ivanescu} that the category $\CCu$ is closed under sequential inductive limits. The following description of inductive limits in the category $\CCu$ follows from the proof of this theorem.

\begin{proposition}\label{prop: inductivelimitCu}
Let $(S_n,\varphi_n)_{n\in\N}$, with $\varphi_n\colon S_n\to S_{n+1}$, be an inductive system in the category $\CCu$. For $m, n\in \N$
with $m\geq n$, let $\varphi_{n,m}\colon S_n\to S_{m+1}$ denote the composition $\varphi_{n,m}=\varphi_m\circ\cdots\circ\varphi_n$. A pair $(S,(\varphi_{n,\infty})_{n\in\N})$, consisting of a semigroup $S$ and morphisms $\varphi_{n,\infty}\colon S_n\to S$ in the category $\CCu$ satisfying $\varphi_{n+1,\infty}\circ\varphi_{n}=\varphi_{n,\infty}$ for all $n\in \N$, is the inductive limit of the system $(S_n,\varphi_n)_{n\in\N}$ if and only if:
\begin{itemize}
\item[(i)] For every $s\in S$ there exist elements $s_n\in S_n$ for $n\in\N$, such that $\varphi_n(s_n)\ll s_{n+1}$ for all $n\in \N$ and
$$s=\sup\limits_{n\in\N} \varphi_{n,\infty}(s_n);$$
\item[(ii)] Whenever $s,s',t\in S_n$ satisfy $\varphi_{n,\infty}(s)\le\varphi_{n,\infty}(t)$ and $s'\ll s$, there exists $m\ge n$ such that $\varphi_{n,m}(s')\le\varphi_{n,m}(t)$.
\end{itemize}
\end{proposition}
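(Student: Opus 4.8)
The plan is to deduce the statement from the explicit construction of sequential inductive limits in $\CCu$ carried out in the proof of \cite[Theorem 2]{Coward-Elliott-Ivanescu}, together with the fact that an inductive limit, when it exists, is unique up to canonical isomorphism of pairs. Recall from that proof that the inductive limit $\widehat S$ of $(S_n,\varphi_n)_n$ may be described as a set of equivalence classes of sequences $(s_n)_n$ with $s_n\in S_n$ and $\varphi_n(s_n)\le s_{n+1}$ for all $n$, with termwise addition, with order determined by declaring that $[(s_n)_n]\le[(t_n)_n]$ precisely when for every $n$ and every $s'\ll s_n$ there is some $m\ge n$ with $\varphi_{n,m}(s')\le t_{m+1}$, and with structure maps $\widehat\varphi_{n,\infty}(s)=[(0,\dots,0,s,\varphi_n(s),\varphi_{n,n+1}(s),\dots)]$, the entry $s$ sitting in position $n$.

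First I would check that the pair $(\widehat S,(\widehat\varphi_{n,\infty})_n)$ itself satisfies (i) and (ii). For (i): given a class $x=[(s_n)_n]$, one reads off directly from the definition of the order on $\widehat S$ that $x=\sup_n\widehat\varphi_{n,\infty}(s_n)$, and one uses axiom (O2) at each stage, together with a routine refinement (replacing the given $s_n$ by a suitable element $\ll s_n$ at each stage), to arrange in addition that $\varphi_n(s_n)\ll s_{n+1}$, as required in (i). Condition (ii) is, essentially by definition, the order relation on $\widehat S$ read off for elements coming from a single stage $n$: from $\widehat\varphi_{n,\infty}(s)\le\widehat\varphi_{n,\infty}(t)$ and $s'\ll s$ one extracts, straight from the defining inequality, some $m\ge n$ with $\varphi_{n,m}(s')\le\varphi_{n,m}(t)$. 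Since (i) and (ii) are visibly invariant under an isomorphism of pairs, and any pair that is the inductive limit of $(S_n,\varphi_n)_n$ is isomorphic as a pair to $(\widehat S,(\widehat\varphi_{n,\infty})_n)$, the forward implication follows.

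For the converse I would take a pair $(S,(\varphi_{n,\infty})_n)$ satisfying (i) and (ii) and verify the universal property of the inductive limit. So let $T$ be an object of $\CCu$ and let $\eta_n\colon S_n\to T$ be morphisms in $\CCu$ with $\eta_{n+1}\circ\varphi_n=\eta_n$ for all $n$. For $s\in S$ choose $(s_n)_n$ as in (i) and set $\eta(s)=\sup_n\eta_n(s_n)$; this supremum exists by (O1), and $\eta_n(s_n)=\eta_{n+1}(\varphi_n(s_n))\le\eta_{n+1}(s_{n+1})$ shows the sequence is increasing. Any $\CCu$-morphism $\eta$ with $\eta\circ\varphi_{n,\infty}=\eta_n$ must satisfy $\eta(s)=\eta(\sup_n\varphi_{n,\infty}(s_n))=\sup_n\eta_n(s_n)$ by (M1), which gives uniqueness; the substance of the argument is that this formula does define such a morphism. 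The heart of this is a descent step: if $a''\ll a'\ll a$ in $S_n$ and $\varphi_{n,\infty}(a)\le t$, where $t=\sup_m\varphi_{m,\infty}(t_m)$ is a representative as in (i) of some $t\in S$, then — applying (M2) of $\varphi_{n,\infty}$ to get $\varphi_{n,\infty}(a')\ll\varphi_{n,\infty}(a)\le t$, then the definition of $\ll$ in $S$ to get $\varphi_{n,\infty}(a')\le\varphi_{M,\infty}(t_M)$ for some $M\ge n$, then transporting $a'$ and $a''$ to stage $M$ and invoking (ii) there, and finally applying $\eta_M$ — one obtains $\eta_n(a'')\le\eta_M(t_M)\le\sup_m\eta_m(t_m)$. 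Feeding in the elements $\varphi_{n-2,n-1}(s_{n-2})\ll\varphi_{n-1}(s_{n-1})\ll s_n$ supplied by condition (i) then shows at once that $\eta(s)$ is independent of the representative $(s_n)_n$ and that $s\le s'$ in $S$ implies $\eta(s)\le\eta(s')$. The remaining points are checked similarly: the compatibility $\eta\circ\varphi_{n,\infty}=\eta_n$ by evaluating the formula on the representative of $\varphi_{n,\infty}(s)$ built via (O2) from a rapidly increasing sequence in $S_n$ with supremum $s$; property (M1) for $\eta$ by the descent step applied to a representative of a supremum; property (M2) from the fact that $s\ll t$ in $S$ forces $s\le\varphi_{N,\infty}(t_N)\ll\varphi_{N+1,\infty}(t_{N+1})\le t$ for some $N$, using the compatibility and monotonicity of $\eta$ together with (M2) of $\eta_{N+1}$; and preservation of $0$ and of addition from (O3), (O4) and the corresponding properties of the $\eta_n$. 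This makes $(S,(\varphi_{n,\infty})_n)$ an inductive limit and completes the converse.

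I expect the main obstacle to be the organization of the converse: extracting a clean descent lemma from condition (ii) and the staircase in condition (i), and then using it — in the right logical order, without circular appeals to properties of $\eta$ not yet established — to verify that $\eta$ is a well-defined, order-preserving morphism in $\CCu$. By comparison, the forward implication is a matter of unwinding the construction in \cite[Theorem 2]{Coward-Elliott-Ivanescu}, and the uniqueness of inductive limits is purely formal.
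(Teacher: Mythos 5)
Your argument is correct. The paper itself offers no proof of this proposition: it simply asserts that the description ``follows from the proof'' of \cite[Theorem 2]{Coward-Elliott-Ivanescu}, i.e.\ from the explicit construction of the limit as equivalence classes of compatible sequences. Your forward direction is exactly that unwinding (checking (i) and (ii) on the concrete model and transporting them along the canonical isomorphism of pairs), so there you coincide with what the paper implicitly intends; the routine refinement needed to upgrade $\varphi_n(s_n)\le s_{n+1}$ to $\varphi_n(s_n)\ll s_{n+1}$ via (O2) and a diagonal choice is standard, as is the padding by $0$ (which is $\ll$ every element). Where you genuinely add content is the converse, which the paper leaves entirely unaddressed: you verify the universal property directly, and you correctly isolate the one non-formal ingredient, namely the descent lemma that turns $\varphi_{n,\infty}(a)\le t$ into $\eta_n(a'')\le\sup_m\eta_m(t_m)$ for $a''\ll a'\ll a$ by combining (M2), the definition of $\ll$ in $S$, and condition (ii) at a later stage. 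The two levels of compact containment you require are exactly what the staircase in (i) supplies, so the well-definedness, monotonicity, (M1) and (M2) of $\eta$ all follow as you indicate, and uniqueness is forced by (M1). An alternative, slightly shorter organization of the converse would be to show that any two pairs satisfying (i) and (ii) are canonically isomorphic and then invoke the existence of the limit from \cite{Coward-Elliott-Ivanescu}; your direct verification buys self-containedness at the cost of more bookkeeping, but nothing in it fails.
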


\begin{lemma}\label{lem: sup}
Let $S$ be a semigroup in $\CCu$, let $s$ be an element in $S$ and let $(s_n)_{n\in\N}$ be a rapidly increasing sequence in $S$ such that $s=\sup\limits_{n\in\N} s_n$. Let $T$ be a subset of $S$ such that every element of $T$ is the supremum of a rapidly increasing sequence of elements in $T$. Suppose that for every $n\in \N$ there is $t\in T$ such that $s_n\ll t\le s$. Then there exists an increasing sequence $(t_n)_{n\in\N}$ in $T$ such that $s=\sup\limits_{n\in\N} t_n$.
\end{lemma}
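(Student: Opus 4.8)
The plan is to build the sequence $(t_n)$ recursively, at each stage choosing an element of $T$ that dominates the next term of the rapidly increasing sequence $(s_n)$ while still being dominated (in a suitable approximate sense) by $s$, and then upgrading the resulting sequence to an increasing one by passing to a subsequence and using property (O1). First I would fix, for each $n$, an element $u_n \in T$ with $s_n \ll u_n \le s$, as provided by hypothesis. The issue is that the $u_n$ need not be increasing, so I cannot simply take $t_n = u_n$. To remedy this, I would use that each $u_n$ is itself the supremum of a rapidly increasing sequence of elements of $T$; the key point is that $s = \sup_m s_m$ and $s_n \ll u_n$, so there is room to interpolate.

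More precisely, here is the recursion I would carry out. Suppose $t_1 \ll t_2 \ll \dots \ll t_{k}$ have been chosen in $T$ with $t_i \le s$ and $s_i \le t_i$ for $i \le k$ (I will also arrange $s_i \ll t_{i}$, which is automatic once $s_i\ll s_{i+1}\le t_i$ is unavailable; instead I keep the weaker $s_i \le t_i$ and the chain condition $t_{i-1}\ll t_i$). To produce $t_{k+1}$: pick $u\in T$ with $s_{k+2}\ll u\le s$. Since $u$ is the supremum of a rapidly increasing sequence $(v_j)_{j\in\N}$ in $T$, and since $u \le s = \sup_m s_m$ while $t_k \ll s_{k+1}$ would be needed — here instead I observe $t_k \le s = \sup v_j$ combined with $t_k$ being compactly contained in nothing a priori, so I must be more careful. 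The clean way: choose $t_{k+1}\in T$ to be a term $v_j$ of the rapidly increasing sequence converging to $u$ that is large enough so that $t_{k+1} \ge s_{k+1}$ (possible because $s_{k+1}\ll s_{k+2}\ll u = \sup_j v_j$, hence $s_{k+1}\le v_j$ for some $j$) and simultaneously $t_{k+1}\gg t_k$; this last requires knowing $t_k \ll u$. To get $t_k \ll u$, I should strengthen the induction hypothesis to include $t_k \ll s$ is not enough — rather I will arrange at each stage that $t_k \le s_{\ell}$ for some $\ell$, equivalently keep $t_k$ sandwiched as $s_k \le t_k \le s$ with $t_k$ compactly contained in $s$; then since $s\le u$ fails in general...

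The correct fix, which I would implement, is to interleave: at stage $k$ I maintain $s_{n_k} \le t_k \le s$ for an increasing sequence of indices $n_k$, together with $t_{k}\ll t_{k+1}$. Given $t_k$ with $t_k \le s = \sup_m s_m$ and $t_k \ll$ (something), pick $m$ with $t_k \le s_m$ — possible if $t_k \ll s$, which holds because $t_k$ was chosen as a proper term of a rapidly increasing sequence with supremum $\le s$, hence $t_k \ll (\text{that supremum}) \le s$, so $t_k \ll s$ by (O1)-compatibility. Wait, $t_k \ll u_k \le s$ gives $t_k \ll s$ directly. Then $t_k \ll s = \sup s_m$ yields $t_k \le s_{m}$ for some $m$; enlarging $m$, also $s_{k+1}\ll s_{m}$. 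Now apply the hypothesis to get $w\in T$ with $s_{m}\ll w \le s$, so $t_k \le s_m \ll w$, giving $t_k \ll w$, and also $s_{k+1}\le w$. Writing $w = \sup_j v_j$ with $(v_j)$ rapidly increasing in $T$, compact containment $t_k \ll w$ gives $t_k \le v_j$ for $j$ large; taking $t_{k+1}:= v_{j+1}$ for such $j$ ensures $t_k \le v_j \ll v_{j+1}=t_{k+1}$, and $t_{k+1} \le w \le s$. By enlarging $j$ we also keep $s_{k+1}\le v_{j'}$ for suitable $j'$, so we may assume $s_{k+1}\le t_{k+1}$.

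This recursion produces an increasing (indeed rapidly increasing) sequence $(t_n)$ in $T$ with $s_n \le t_n \le s$ for all $n$. By (O1) the supremum $t := \sup_n t_n$ exists in $S$, and $t \le s$ since each $t_n \le s$. Conversely $s = \sup_n s_n \le \sup_n t_n = t$. Hence $s = \sup_n t_n$, as required. The main obstacle, as the discussion above shows, is purely bookkeeping: one must thread the compact-containment relations correctly so that at each step the newly chosen element of $T$ both dominates the prescribed $s_{k+1}$ and sits strictly above the previously chosen $t_k$, and the trick is to interpose an index $m$ with $t_k \le s_m$ and apply the hypothesis at level $s_m$ rather than at level $s_{k+1}$, exploiting that $t_k \ll s$ (which follows from $t_k \ll u_k \le s$) together with axiom (O2) for $s$.
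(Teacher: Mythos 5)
Your final paragraph contains a correct proof, and it is essentially the paper's own argument: both recursively interleave the chosen elements of $T$ with the sequence $(s_n)$, at each stage invoking the hypothesis to get $w\in T$ with $s_m\ll w\le s$ and then using the rapidly increasing sequence in $T$ converging to $w$ to select a term that dominates the previous data while being compactly contained in $s$. The only differences are cosmetic (the paper records the interleaving as $s_{n_k}\le t_k\le s_{n_{k+1}}$ instead of arranging $t_k\ll t_{k+1}$ directly), so you should simply delete the false starts and keep the last paragraph.
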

\begin{proof}
It is sufficient to construct an increasing sequence $(n_k)_{k\in\N}$ of natural numbers and a sequence $(t_k)_{k\in\N}$ in $T$ such
that $s_{n_k}\le t_k\le s_{n_{k+1}}$ for all $k\in \N$, since this implies that $s=\sup\limits_{k\in\N} t_k$.\\
\indent For $k=1$, set $n_1=1$ and $s_{n_1}=0$. Assume inductively that we have constructed $n_j$ and $t_j$ for all $j\le k$ and let us
construct $n_{k+1}$ and $t_{k+1}$. By the assumptions of the lemma, there exists $t\in T$ such that $s_{n_k}\ll t\le s$. Also by assumption,
$t$ is the supremum of a rapidly increasing sequence of elements of $T$. Hence there exists $t'\in T$ such that $s_{n_k}\le t'\ll s$. Use
that $s=\sup\limits_{n\in\N} s_n$ and $t'\ll s$, to choose $n_{k+1}\in\N$ with $n_{k+1}>n_k$ such that $t'\le s_{n_{k+1}}\ll s$. Set $t_{k+1}=t'$.
Then $s_{n_k}\le t_{k+1}\le s_{n_{k+1}}$. This completes the proof of the lemma.
\end{proof}

\begin{definition} \label{df: S gamma}
Let $S$ be a semigroup in the category $\CCu$. Let $I$ be a nonempty set and let $\gamma_i\colon S\to S$ for $i\in I$, be a family of
endomorphisms of $S$ in the category $\CCu$. We introduce the following notation:
$$S^\gamma=\left\{s\in S\colon \exists \ (s_t)_{t\in(0,1]} \mbox{ in } S \colon
\begin{aligned}
& s_r\ll s_{t} \mbox{ if } r<t,\, s_t=\sup\limits_{r<t} s_r \ \forall \ t\in (0,1],\\
& s_1=s, \mbox{ and } \gamma_i(s_t)=s_t  \ \forall \ t\in (0, 1] \mbox{ and } \forall\  i\in I
\end{aligned}
\right\},$$
and
$$S^\gamma_\N=\left\{s\in S\colon \exists \ (s_n)_{n\in\N} \mbox{ in } S\colon
\begin{aligned}
& s_n\ll s_{n+1} \ \forall \ n\in\N, \, s=\sup\limits_{n\in\N} s_n,\\
& \mbox{ and } \gamma_i(s_n)=s_n \ \forall \ n\in\N \mbox{ and } \forall\  i\in I
\end{aligned}
\right\}.$$
\end{definition}

\begin{lemma}\label{lem: closure}
Let $S$ be a semigroup in the category $\CCu$. Let $I$ be a nonempty set and let $\gamma_i\colon S\to S$ for $i\in I$, be a family of
endomorphisms of $S$ in the category $\CCu$. Then
\begin{itemize}
\item[(i)] $S^\gamma_\N$ is closed under suprema of increasing sequences;
\item[(ii)] $S^\gamma$ is an object in $\CCu$.
\end{itemize}
\end{lemma}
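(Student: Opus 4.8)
The plan is to prove (i) by a diagonal argument and to deduce (ii) from the same argument, combined with a rescaling trick for the paths appearing in the definition of $S^\gamma$.

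For (i), let $(u^{(k)})_{k\in\N}$ be increasing in $S^\gamma_\N$ and set $u=\sup_k u^{(k)}$, which exists by (O1). For each $k$ fix a rapidly increasing sequence $(s^{(k)}_n)_{n\in\N}$ of elements invariant under every $\gamma_i$ with $\sup_n s^{(k)}_n=u^{(k)}$, and fix an enumeration $(k_j,n_j)_{j\in\N}$ of $\N\times\N$. I will build inductively a rapidly increasing sequence $(t_j)_{j\in\N}$ of invariant elements, each of the form $t_j=s^{(K_j)}_{N_j}$ for suitable $K_j,N_j\in\N$. Put $t_1=s^{(1)}_1$. Given $t_j=s^{(K_j)}_{N_j}$, let $J=\max\{k_j,K_j,j+1\}$; from $s^{(K_j)}_{N_j}\ll s^{(K_j)}_{N_j+1}\le u^{(K_j)}\le u^{(J)}=\sup_m s^{(J)}_m$ we get $a\in\N$ with $t_j\le s^{(J)}_a$, and from $s^{(k_j)}_{n_j}\ll s^{(k_j)}_{n_j+1}\le u^{(k_j)}\le u^{(J)}$ we get $b\in\N$ with $s^{(k_j)}_{n_j}\le s^{(J)}_b$; set $t_{j+1}=s^{(J)}_{\max\{a,b\}+1}$. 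Then $t_{j+1}$ is invariant, $t_j\ll t_{j+1}$, and $s^{(k_j)}_{n_j}\le t_{j+1}$. Since $t_j\le u^{(K_j)}\le u$ we have $\sup_j t_j\le u$, while for every pair $(k,n)=(k_j,n_j)$ we get $s^{(k)}_n\le t_{j+1}$, so taking suprema over $n$ and then over $k$ gives $\sup_j t_j\ge u$. Hence $(t_j)_j$ certifies $u\in S^\gamma_\N$.

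For (ii), I first record the rescaling observation: if $s\in S^\gamma$ with witnessing path $(s_t)_{t\in(0,1]}$ and $r_0\in(0,1]$, then $(s_{r_0 t})_{t\in(0,1]}$ is a witnessing path for $s_{r_0}$; hence, fixing $r_m\nearrow 1$, every $s\in S^\gamma$ equals the supremum of the rapidly increasing sequence $(s_{r_m})_m$, all of whose terms lie in $S^\gamma$. Also, $S^\gamma$ is closed under addition and contains $0$: for $s,s'\in S^\gamma$ with paths $(s_t),(s'_t)$ the path $(s_t+s'_t)$ witnesses $s+s'\in S^\gamma$, using (O3), (O4) in $S$ and additivity of each $\gamma_i$. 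The main step is that $S^\gamma$ is closed under suprema of increasing sequences: given $(u^{(k)})$ increasing in $S^\gamma$ with $u=\sup_k u^{(k)}$, apply the diagonal construction of (i) to the rapidly increasing sequences $(s^{(k)}_{r_m})_m$, whose terms all lie in $S^\gamma$; this yields a rapidly increasing sequence $(t_n)_n$ in $S^\gamma$ with $\sup_n t_n=u$. Each $t_n$ carries an invariant path $(t^{(n)}_s)_{s\in(0,1]}$ with $t^{(n)}_1=t_n$, and since $t_{n-1}\ll t_n=\sup_{s<1}t^{(n)}_s$ there is $s_n<1$ with $t_{n-1}\le t^{(n)}_{s_n}$ (with the convention $s_1=0$, so that the first segment below uses the full path of $t_1$). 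Now splice: fix $0=a_1<a_2<\cdots$ with $a_n\nearrow 1$, set $u_1=u$, and for $t\in(a_n,a_{n+1}]$ set $u_t=t^{(n)}_{h_n(t)}$, where $h_n\colon(a_n,a_{n+1}]\to(s_n,1]$ is the increasing affine bijection. Each $u_t$ is invariant and $u_{a_{n+1}}=t_n$; the identities $u_t=\sup_{r<t}u_r$ hold because the values carried over from earlier intervals are $\le t_{n-1}\le t^{(n)}_{s_n}$ and hence absorbed; and $u_r\ll u_t$ for $r<t$ holds because, when $r$ lies in an earlier interval $(a_m,a_{m+1})$, one has $u_r\ll t^{(m)}_1=t_m\le u_t$. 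Thus $u\in S^\gamma$.

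It follows that suprema in $S^\gamma$ exist and coincide with those in $S$, and hence that compact containment in $S^\gamma$ coincides with the restriction of that in $S$; the only nonroutine implication here is that $a\ll_{S^\gamma}b$ (for $a,b\in S^\gamma$) forces $a\ll_S b$, which follows by writing $b=\sup_m b_{r_m}$ as above and using $a\le b_{r_{m_0}}\ll b_{r_{m_0+1}}\le b$ for some $m_0$. With suprema and $\ll$ in $S^\gamma$ under control, (O1) is the closure just established, (O2) is the rescaling observation, and (O3), (O4) follow at once from the corresponding facts in $S$ because $S^\gamma$ is closed under addition; hence $S^\gamma$ is an object of $\CCu$. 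The crux of the argument — and the step I expect to be most delicate — is the passage from a rapidly increasing \emph{sequence} of invariant elements (which is all that (i), and the semigroup $S^\gamma_\N$, directly provide) to an actual invariant \emph{path}; the splicing construction accomplishes this, the one point requiring care being the compact-containment relations across the junctions $a_{n+1}$, which hold precisely because left-hand approximants at a junction are compactly contained in the junction value $t_n$.
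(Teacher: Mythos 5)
Your proof is correct and follows essentially the same strategy as the paper's: a diagonal argument for (i), and for (ii) a reduction of axiom (O1) to splicing reparametrized tail segments of the witnessing paths, with (O2)--(O4) treated as routine. If anything, you are more careful than the paper in two places: your diagonal sequence in (i) is explicitly rapidly increasing (as membership in $S^\gamma_\N$ requires), and you explicitly check that suprema and the compact-containment relation on $S^\gamma$ agree with those inherited from $S$.
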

\begin{proof} (i). Let $(s_n)_{n\in\N}$ be an increasing sequence in $S_{\N}^\gamma$. For each $n\in \N$, choose a rapidly increasing
sequence $(s_{n,m})_{m\in\N}$ in $S$ such that $s_n=\sup\limits_{m\in\N} s_{n,m}$ and $\gamma_i(s_{n,m})=s_{n,m}$ for all $i\in I$
and $m\in \N$. By the definition of the compact containment relation, there exist increasing sequences $(n_j)_{j\in\N}$ and $(m_j)_{j\in\N}$
in $\N$ such that $s_{k,l}\le s_{n_j,m_j}$ whenever $1\le k,l\le j$, and such that $( s_{n_j,m_j})_{j\in \N}$ is increasing. Let $s$ be the supremum of $(s_{n_j,m_j})_{j\in\N}$ in $S$. Then
$s\in S_\N^\gamma$, and it is straightforward to check, using a diagonal argument, that $s=\sup\limits_{n\in\N} s_n$, as desired.

(ii). It is clear that $S^\gamma$ satisfies O2, O3 and O4.
Now let us check that $S^\gamma$ satisfies axiom O1. Let $(s^{(n)})_{n\in\N}$ be an increasing sequence in $S^\gamma$ and let $s$ be its supremum in $S$. It is sufficient to show that $s\in S^\gamma$.

For each $n \in \N$, choose a path $(s_t^{(n)})_{t\in (0,1]}$ as in the definition of $S^\gamma$ for $s^{(n)}$.
Using that $s_t^{(n)}\ll s^{(n+1)}$ for all $n\in\N$ and all $t\in (0,1)$, together with a diagonal argument,
choose an increasing sequence $(t_n)_{n\in\N}$ in $(0,1]$ converging to $1$, such that
$$s_{t_n}^{(n)}\ll s_{t_{n+1}}^{(n+1)} \quad\forall \ n\in \N, \mbox{ and } s=\sup\limits_{n\in\N}s_{t_n}^{(n)}.$$
This implies, using the definition of the compact containment relation, that for each $n\in \N$ there exists $t_{n+1}'$ such that
$t_n<t_{n+1}'<t_{n+1}$ and
$$s_{t_n}^{(n)}\ll s_t^{(n+1)}\le s_{t_{n+1}}^{(n+1)} \mbox{ for all } t\in (t_{n+1}', t_{n+1}].$$
Choose an increasing function $f\colon (0,1]\to (0,1]$ such that
$$f\left(\left(1-\frac{1}{n}, 1-\frac{1}{n+1}\right]\right)=(t'_{n+1}, t_{n+1}]$$
for all $n\in \N$.
Define a path $(s_t)_{t\in (0,1]}$ in $S$ by taking $s_1=s$ and
$$s_t=s_{f(t)}^{(n+1)} \mbox{ for } t\in \left(1-\frac{1}{n}, 1-\frac{1}{n+1}\right].$$
Then $\gamma_i(s_t)=s_t$ for all $t\in (0,1]$ and all $i\in I$, so $s\in S^\gamma$. It is clear that this path satisfies the conditions in the definition of $S^\gamma$ for $s$.
\end{proof}

\subsubsection{The Cuntz semigroup}

Let $A$ be a C*-algebra and let $a, b\in A$ be
positive elements. We say that $a$ is \emph{Cuntz subequivalent} to $b$, and denote this by $a\precsim b$, if there exists a sequence $(d_n)_{n\in\N}$
in $A$ such that $\lim\limits_{n\to \infty} \|d_n^*bd_n- a\|=0$. We say that $a$ is \emph{Cuntz equivalent} to $b$, and denote this by $a\sim b$,
if $a\precsim b$ and $b\precsim a$. It is clear that $\precsim$ is a preorder relation in the set of positive elements of $A$, and thus
$\sim$ is an equivalence relation. We denote by $[a]$ the Cuntz equivalence class of the element $a\in A_+$.

The first conclusion of the following lemma was proved in \cite[Proposition 2.2]{Rordam} (see also \cite[Lemma 2.2]{Kirchberg-Rordam}).
The second statement was shown in \cite[Lemma 1]{Robert-Santiago}.

\begin{lemma}\label{lem: Cuntz relation}
Let $A$ be a C*-algebra and let $a$ and $b$ be positive elements in $A$ such that $\|a-b\|<\varepsilon$. Then $(a-\varepsilon)_+\precsim b$.
More generally, if $r$ is a non-negative real number, then $(a-r-\varepsilon)_+\precsim (b-r)_+$.
\end{lemma}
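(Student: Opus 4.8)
The plan is to establish both assertions at once, the first being the special case $r=0$ of the second. Fix a non-negative real $r$, put $\eta=\|a-b\|$ (so $0\le\eta<\varepsilon$) and $c=(a-r-\varepsilon)_+$; the goal is to prove $c\precsim(b-r)_+$. The strategy is to produce an element $x\in A$ with $x^*x\ge(\varepsilon-\eta)c$ and $xx^*\le\|c\|\,(b-r)_+$: the first inequality will give $c\precsim x^*x$ and the second $xx^*\precsim(b-r)_+$, and since $x^*x\sim xx^*$ (a standard fact about Cuntz equivalence) these combine to $c\precsim(b-r)_+$.

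I would first record two elementary facts, used freely below: (a) if $0\le u\le v$ then $u\precsim v$, and (b) $u\sim\lambda u$ for every scalar $\lambda>0$. For (a): since $v\ge u$ one has $(v+\tfrac1n)^{-1}\le(u+\tfrac1n)^{-1}$ in $\widetilde A$, so $d_n:=(v+\tfrac1n)^{-1/2}u^{1/2}\in A$ satisfies $0\le u-d_n^*vd_n=\tfrac1n\,u^{1/2}(v+\tfrac1n)^{-1}u^{1/2}\le\tfrac1n\,u(u+\tfrac1n)^{-1}\le\tfrac1n$, whence $d_n^*vd_n\to u$; (b) is immediate. Now set $x=(b-r)_+^{1/2}c^{1/2}$. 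Note $x\in A$, because the functions $t\mapsto(t-r)_+$ and $t\mapsto(t-r-\varepsilon)_+$ vanish at $0$ (as $r\ge0$, $\varepsilon>0$), so $(b-r)_+$ and $c$ lie in $A$, not merely in $\widetilde A$. Then $xx^*=(b-r)_+^{1/2}\,c\,(b-r)_+^{1/2}\le\|c\|\,(b-r)_+$, so $xx^*\precsim(b-r)_+$ by (a) and (b).

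The computational heart is the inequality $x^*x\ge(\varepsilon-\eta)c$. Here $x^*x=c^{1/2}(b-r)_+c^{1/2}$. From $t_+\ge t$ we get $(b-r)_+\ge b-r\cdot1$ in $\widetilde A$, hence $x^*x\ge c^{1/2}b\,c^{1/2}-r\,c$. Writing $b=a+(b-a)$ and using $\|b-a\|=\eta$ gives $c^{1/2}b\,c^{1/2}\ge c^{1/2}a\,c^{1/2}-\eta\,c=a\,c-\eta\,c$, the last equality because $c$ is a function of $a$. Finally, functional calculus for $a$ gives $a\,c=a\,(a-r-\varepsilon)_+\ge(r+\varepsilon)\,c$, since $t\,(t-r-\varepsilon)_+\ge(r+\varepsilon)(t-r-\varepsilon)_+$ for all $t\ge0$. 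Assembling: $x^*x\ge(r+\varepsilon)c-\eta c-r c=(\varepsilon-\eta)c\ge0$; since $\varepsilon-\eta>0$ this yields $c\le\tfrac1{\varepsilon-\eta}x^*x$, hence $c\precsim x^*x$ by (a) and (b). Chaining, $c\precsim x^*x\sim xx^*\precsim(b-r)_+$.

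I do not expect a genuine obstacle; the result is classical (essentially \cite[Proposition 2.2]{Rordam} together with \cite[Lemma 1]{Robert-Santiago}). The only points requiring care are the bookkeeping of which elements lie in $A$ rather than in $\widetilde A$ (in forming $(b-r)_+^{1/2}$ and the resolvents $(v+\tfrac1n)^{-1/2}$), and justifying each operator inequality in the chain above either by order-preservation under two-sided multiplication by a fixed element or by ordinary functional calculus applied to the single positive element $a$; the inequality $x^*x\ge(\varepsilon-\eta)c$ is the crux.
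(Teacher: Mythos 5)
Your proof is correct. Note that the paper itself gives no argument for this lemma --- it simply cites \cite[Proposition 2.2]{Rordam} and \cite[Lemma 1]{Robert-Santiago} --- so there is no in-paper proof to compare against; your argument is essentially the standard one from those references (form $x=(b-r)_+^{1/2}c^{1/2}$, bound $x^*x$ below by a positive multiple of $c$ using $\,(b-r)_+\ge b-r\ge a-\eta-r\,$ and functional calculus for $a$, bound $xx^*$ above by a multiple of $(b-r)_+$, and chain through $x^*x\sim xx^*$), and all the individual steps, including the verification that $0\le u\le v$ implies $u\precsim v$ and the membership of the relevant elements in $A$ rather than $\widetilde A$, check out.
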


The Cuntz semigroup of $A$, denoted by $\Cu(A)$, is defined as the set of Cuntz equivalence classes of positive elements of $A\otimes \K$.
Addition in $\Cu(A)$ is given by
$$[a]+[b]=[a'+b'],$$
where $a',b'\in (A\otimes\K)_+$ are orthogonal and satisfy $a'\sim a$ and $b'\sim b$. Furthermore, $\Cu(A)$ becomes an ordered semigroup when
equipped with the order $[a]\le [b]$ if $a\precsim b$. If $\phi\colon A\to B$ is a *-homomorphism, then $\phi$ induces an order-preserving
map $\Cu(\phi)\colon \Cu(A)\to \Cu(B)$, given by $\Cu(\phi)([a])=[(\phi\otimes \mathrm{id}_\K)(a)]$ for every $a\in (A\otimes \K)_+$.

\begin{remark}
Let $A$ be a C*-algebra, let $a\in A$ and let $\varepsilon>0$. It can be checked that $[(a-\varepsilon)_+]\ll [a]$ and that
$[a]=\sup\limits_{\varepsilon>0} [(a-\varepsilon)_+]$, thus showing that $\Cu(A)$ satisfies Axiom O2.
\end{remark}

It is shown in \cite[Theorem 1]{Coward-Elliott-Ivanescu} that $\Cu$ is a functor from the category of C*-algebras to the category $\CCu$.

\begin{lemma}\label{lem: morphism}
Let $A$ and $B$ be C*-algebras and let $\rho\colon \Cu(A)\to \Cu(B)$ be an order-preserving semigroup map. Suppose that for all $a\in (A\otimes\K)_+$ one has
\begin{itemize}
\item[(i)] $\rho([a])=\sup\limits_{\varepsilon>0}\rho([(a-\varepsilon)_+])$,
\item[(ii)] $\rho([(a-\varepsilon)_+])\ll \rho ([a])$ for all $\varepsilon>0$.
\end{itemize}
Then $\rho$ is a morphism in the category $\CCu$; that is, it preserves suprema of increasing sequences and the compact containment relation.
\end{lemma}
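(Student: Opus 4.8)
The plan is to check the two remaining conditions for $\rho$ to be a morphism in $\CCu$, namely that it preserves suprema of increasing sequences (property (M1) of Definition \ref{def: CCu}) and the compact containment relation (property (M2)). Everything follows from three facts: the decomposition recorded in the Remark preceding this lemma, that for every $a \in (A \otimes \K)_+$ one has $[(a-1/n)_+] \ll [a]$ for all $n \in \N$ and $[a] = \sup_{n} [(a-1/n)_+]$; the two hypotheses (i) and (ii); and the elementary observation, valid in any ordered semigroup, that $x \le y$ and $y \ll z$ imply $x \ll z$. I will read the suprema ``$\sup_{\varepsilon > 0}$'' in the hypotheses (and in the Remark) as suprema along the cofinal sequence $\varepsilon = 1/n$, so that axiom O1 of $\CCu$, which is phrased for sequences, applies verbatim.

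For (M2), suppose $s \ll t$ in $\Cu(A)$ and write $t = [a]$ with $a \in (A \otimes \K)_+$. The sequence $([(a-1/n)_+])_{n \in \N}$ is increasing with supremum $[a] = t$, so the definition of compact containment gives $k \in \N$ with $s \le [(a-1/k)_+]$. Applying the order-preserving map $\rho$ yields $\rho(s) \le \rho([(a-1/k)_+])$, while hypothesis (ii) with $\varepsilon = 1/k$ gives $\rho([(a-1/k)_+]) \ll \rho([a]) = \rho(t)$. The elementary observation above then gives $\rho(s) \ll \rho(t)$.

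For (M1), let $(s_n)_{n \in \N}$ be an increasing sequence in $\Cu(A)$ with supremum $s$, and write $s = [a]$ with $a \in (A \otimes \K)_+$. Since $\rho$ is order-preserving the sequence $(\rho(s_n))_n$ is increasing, its supremum exists by axiom O1 in $\Cu(B)$, and $\sup_n \rho(s_n) \le \rho(s)$; only the reverse inequality requires work. By hypothesis (i) we have $\rho(s) = \rho([a]) = \sup_{m} \rho([(a-1/m)_+])$, so it suffices to bound each term. Fix $m$. Since $[(a-1/m)_+] \ll [a] = s = \sup_n s_n$, there is $n$ with $[(a-1/m)_+] \le s_n$, and hence $\rho([(a-1/m)_+]) \le \rho(s_n) \le \sup_n \rho(s_n)$. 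Taking the supremum over $m$ gives $\rho(s) \le \sup_n \rho(s_n)$, as desired.

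I do not anticipate a genuine difficulty: the statement is a formal consequence of the two hypotheses and of axiom O2 for $\Cu(A)$, with hypothesis (i) responsible for (M1) and hypothesis (ii) responsible for (M2). The only step that needs mild care is the one already flagged, replacing the suprema ``over $\varepsilon > 0$'' by suprema over the cofinal sequence $\varepsilon = 1/n$ so that the sequence-based machinery of $\CCu$ is directly applicable; no other subtlety arises.
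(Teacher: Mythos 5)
Your proof is correct and follows essentially the same route as the paper's: hypothesis (ii) combined with $[a]=\sup_n[(a-1/n)_+]$ gives (M2), and hypothesis (i) combined with $[(a-\varepsilon)_+]\ll[a]$ gives (M1). The only cosmetic differences are that you bound $\rho(s)$ directly by $\sup_n\rho(s_n)$ rather than by an arbitrary upper bound, and that you explicitly pass to the cofinal sequence $\varepsilon=1/n$; neither changes the substance.
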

\begin{proof}
We show first that $\rho$ preserves suprema of increasing sequences. Let $a$ be a positive element in $A\otimes\K$ and
let $(a_n)_{n\in\N}$ be an increasing sequence of positive elements in $A\otimes\K$ such that $\sup\limits_{n\in\N}[a_n]=[a]$.
Then $\rho([a_n])\le \rho([a])$ for all $n\in\N$. Suppose that $b\in (B\otimes\K)_+$ is such that $\rho([a_n])\le [b]$ for
all $n\in \N$ and let $\varepsilon>0$. By the definition of the compact containment relation and the fact that $[(a-\varepsilon)_+]\ll [a]$,
there exists $n_0\in \N$ such that $[(a-\varepsilon)_+]\le [a_{n_0}]$. By applying $\rho$ to this inequality we get
$$\rho([(a-\varepsilon)_+])\le \rho([a_{n_0}])\le [b].$$
By taking supremum in $\varepsilon>0$ and applying (i) we get
$$\rho([a])=\sup\limits_{\varepsilon>0}\rho([(a-\varepsilon)_+])\le [b].$$
This shows that $\rho([a])$ is the supremum of $(\rho([a_n]))_{n\in \N}$, as desired.

We proceed to show that $\rho$ preserves the compact containment relation. Let $a$ and $b$ be positive elements in $A\otimes\K$ such
that $[a]\ll [b]$. Choose $\varepsilon>0$ such that $[a]\le [(b-\varepsilon)_+]\le [b]$. It follows that
$$\rho([a])\le \rho([(b-\varepsilon)_+])\le \rho([b]).$$
By (ii) applied to $[b]$ we get $\rho([a])\ll \rho([b])$, which concludes the proof.
\end{proof}

The following lemma is a restatement of \cite[Lemma 4]{Robert-Santiago}.

\begin{lemma}\label{lem: interpolation}
Let $A$ be a C*-algebra, let $(x_i)_{i=0}^n$ be elements of $\Cu(A)$ such that $x_{i+1}\ll x_i$ for all $i=0,\ldots,n$, and let $\varepsilon>0$.
Then there exists $a\in (A\otimes \K)_+$ such that
\begin{align*}
x_n\ll [(a-(n-1)\varepsilon)_+]&\ll x_{n-1}\ll  [(a-(n-2)\varepsilon)_+]\ll\cdots\\
\cdots\ll &x_3\ll [(a-2\varepsilon)_+]\ll x_2\ll [(a-\varepsilon)_+] \ll x_1\ll [a]=x_0.
\end{align*}
\end{lemma}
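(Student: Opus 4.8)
The plan is to induct on $n$, the length of the chain. The case $n=1$ is immediate: given $x_1\ll x_0$ in $\Cu(A)$, pick any representative $a\in(A\otimes\K)_+$ of $x_0=[a]$; the standing fact (recorded in the Remark after the definition of $\Cu$) that $[(a-\varepsilon)_+]\ll[a]$ together with $[a]=\sup_{\varepsilon>0}[(a-\varepsilon)_+]$ and the definition of compact containment yields, for $x_1\ll x_0$, some $\varepsilon'>0$ with $x_1\le[(a-\varepsilon')_+]$. After rescaling $a$ (replacing $a$ by a scalar multiple, which does not change its Cuntz class but rescales the cut-down parameter) we may arrange $\varepsilon'=\varepsilon$, and then $x_1\ll[(a-\varepsilon)_+]\ll[a]=x_0$ follows since cut-downs of a fixed positive element are compactly contained in the element and $x_1\ll x_0$ refines to $x_1\ll[(a-\varepsilon)_+]$ by another application of compact containment against the increasing approximation of $[(a-\varepsilon')_+]$ by its own cut-downs. (This bookkeeping with the $\varepsilon$'s is routine; the point is only that cut-downs produce the required $\ll$-relations.)

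For the inductive step, suppose the statement holds for chains of length $n-1$. Given $x_n\ll x_{n-1}\ll\cdots\ll x_0$ and $\varepsilon>0$, apply the inductive hypothesis to the chain $x_n\ll\cdots\ll x_1$ to obtain $b\in(A\otimes\K)_+$ realizing
$$x_n\ll[(b-(n-2)\varepsilon)_+]\ll x_{n-1}\ll\cdots\ll x_2\ll[(b-\varepsilon)_+]\ll x_1\ll[b].$$
Now I must insert $x_0$ on top, i.e.\ produce a single positive element $a$ whose first cut-down $[(a-\varepsilon)_+]$ sits between $x_1$ and $[b]$ (with $[a]=x_0$ or just $x_1\ll[(a-\varepsilon)_+]$ and $[a]\le x_0$, depending on the exact formulation one wants), while its higher cut-downs $[(a-k\varepsilon)_+]$ for $k\ge 2$ reproduce the role played by $[(b-(k-1)\varepsilon)_+]$. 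The natural device is to choose a positive element $a$ with $(a-\varepsilon)_+$ Cuntz equivalent to $b$ and $[a]$ controlled by $x_0$; concretely, since $x_1\ll x_0$ we can find (by the $n=1$ case applied to $x_1\ll x_0$) a positive element $c$ with $[c]=x_0$ and $x_1\ll[(c-\varepsilon)_+]$, and then one wants to ``glue'' $c$ near the top with a rescaled copy of $b$ lower down so that $(a-\varepsilon)_+\sim b$ while $[(c-\varepsilon)_+]\le[(a-\varepsilon)_+]$.

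The main obstacle is precisely this gluing: arranging a \emph{single} positive element whose successive scalar cut-downs are simultaneously Cuntz equivalent (or compactly sandwiched) to a prescribed finite decreasing chain of classes. The standard trick, which I expect to use, is to work inside $A\otimes\K\otimes C_0(0,1]$ or with a function-valued model: realize the chain by a positive element of the form $\sum_j g_j(t)\otimes e_j$ on disjoint intervals, exploiting that in $\Cu$ one has $[f\otimes p]$ for a positive function $f$ with prescribed ``spectral'' behavior, so that cutting down by $k\varepsilon$ walks along the chain. Alternatively—and this is the route \cite{Robert-Santiago} presumably takes—one uses Lemma \ref{lem: Cuntz relation} repeatedly: choose $b$ from the inductive step, choose $c$ with $[c]=x_0$ and $x_1\ll[(c-\varepsilon)_+]$, note $[(c-\varepsilon)_+]\ll[c]$ and $b\precsim$ (something dominated by $[(c-\varepsilon)_+]$ up to the next cut), and then invoke an approximation result producing $a$ with $\|a-(c + (\text{shifted copy of }b))\|$ small enough that Lemma \ref{lem: Cuntz relation}'s conclusion $(a-k\varepsilon)_+\precsim(\,\cdot\,-(k-1)\varepsilon)_+$ transfers all the required subequivalences. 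Verifying that all $2n$ compact-containment relations survive this one construction—rather than proving them one at a time—is the delicate part; everything else is a matter of tracking the $\varepsilon$-shifts.
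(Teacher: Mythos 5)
First, note that the paper does not prove this lemma at all: it is stated as ``a restatement of \cite[Lemma 4]{Robert-Santiago}'' and the proof is delegated to that reference, so the comparison here is really between your sketch and the argument of Robert--Santiago.

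Your proposal has the right skeleton (induction on the length of the chain, with the new class inserted via a single new element) and your base case is fine, but the inductive step contains a genuine gap: the element $a$ is never actually constructed, and you say so yourself (``the main obstacle is precisely this gluing'', ``verifying that all $2n$ compact-containment relations survive \ldots is the delicate part''). The missing content is exactly the heart of the lemma. Concretely, what is needed is the following. After shrinking the inductively obtained $b$ to $b'=(b-\eta)_+$ for $\eta$ small enough that all the relations $x_{k+2}\ll[(b'-k\varepsilon)_+]\ll x_{k+1}$ survive and moreover $[b']\ll x_1$, one chooses $c$ with $[c]=x_0$ and $\delta>0$ with $b'\precsim (c-\delta)_+$, and uses R{\o}rdam's lemma (\cite[Lemma 2.3]{Kirchberg-Rordam}) to replace $b'$ by a Cuntz-equivalent element sitting inside the hereditary subalgebra $\overline{(c-\delta)_+(A\otimes\K)(c-\delta)_+}$; here one must also use that passing from $x^*x$ to $xx^*$ preserves \emph{every} cut-down $(\,\cdot\,-t)_+$, not just the Cuntz class. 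Then one takes $a=\varepsilon h_\delta(c)+b'$, where $h_\delta$ is a continuous function vanishing at $0$ and equal to $1$ on $[\delta,\infty)$, so that $h_\delta(c)$ is a unit for that hereditary subalgebra; a functional-calculus computation in the commutative C*-algebra generated by $h_\delta(c)$ and $b'$ gives $(a-k\varepsilon)_+=(b'-(k-1)\varepsilon)_+$ for all $k\ge 1$ and $[a]=[c]=x_0$, which yields all the required relations at once. None of this appears in your sketch. Moreover, two of the devices you do propose would not work as stated: passing to $A\otimes\K\otimes C_0(0,1]$ produces an element of the wrong algebra, and any orthogonal sum $c\oplus b$ fails because its cut-downs are the \emph{sums} $[(c-t)_+]+[(b-t)_+]$, which are too large; the hereditary positioning via R{\o}rdam's lemma is unavoidable, and it is precisely the step you leave unverified.
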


\subsubsection{The $\Cu^\sim$-semigroup}
Here we define the $\Cu^\sim$-semigroup of a C*-algebra. This semigroup was introduced in \cite{Robert} in order to classify certain inductive limits of 1-dimensional NCCW-complexes.

\begin{definition}\label{df: Cu sim}
Let $A$ be C*-algebra and let
$\pi\colon \widetilde{A} \to \widetilde{A}/A\cong\C$ denote the quotient map.
Then $\pi$ induces a semigroup homomorphism
$$\mathrm{Cu}(\pi)\colon \mathrm{Cu}(\widetilde{A})\to \mathrm{Cu}(\C)\cong \overline{\mathbb{Z}_+}.$$
We define the semigroup $\Cu^\sim(A)$ by
\[
\mathrm{Cu}^\sim(A)=\{([a], n) \in \Cu(\widetilde{A})\times \mathbb{Z}_+ \mid \mathrm{Cu}(\pi)([a])=n\}/\sim,
\]
where $\sim$ is the equivalence relation defined by
\[
([a], n)\sim ([b],m) \quad\text{ if }\quad  [a]+m[1]+k[1]= [b]+n[1]+k[1],
\]
for some $k\in \N$. The
image of the element $([a], n)$ under the canonical quotient map is denoted by $[a]-n[1]$.

Addition in $\mathrm{Cu}^\sim(A)$ is induced by pointwise addition in $\Cu(\widetilde{A})\times\mathbb{Z}_+$.
The semigroup $\Cu^\sim(A)$ can be endowed with an order: we say that $[a]-n[1]\le [b]-m[1]$ in $\Cu^\sim(A)$ if there exists $k$ in $\mathbb{Z}_+$ such that
$$[a]+(m+k)[1]\le [b]+(n+k)[1]$$
in $\Cu(\widetilde{A})$.

The assignment $A\mapsto \Cu^\sim(A)$ can be turned into a functor as follows.
Let $\phi\colon A\to B$ be a *-homomorphism and let $\widetilde{\phi}\colon \widetilde{A}\to \widetilde{B}$ denote the unital extension of
$\phi$ to the unitizations of $A$ and $B$. Let us denote by $\Cu^\sim(\phi)\colon \Cu^\sim(A)\to \Cu^\sim(B)$ the map defined by
$$\Cu^\sim(\phi)([a]-n[1])=\Cu(\widetilde{\phi})([a])-n[1].$$
It is clear that $\Cu^\sim(\phi)$ is order-preserving, and thus $\Cu^\sim$ becomes a functor from the category of C*-algebras to
the category of ordered semigroups.
\end{definition}

It was shown in \cite{Robert} that the $\Cu^\sim$-semigroup of a C*-algebra with stable rank one
belongs to the category $\CCu$, that $\Cu^\sim$ is a functor from the category of C*-algebras of
stable rank one to the category $\CCu$, and that it preserves inductive limits of sequences.

\section{Classification of actions and equivariant *-homomorphisms}

In this section we classify equivariant *-homomorphisms whose codomain C*-dynamical system have the Rokhlin property. We use this results to classify actions of finite groups on separable C*-algebras with the Rokhlin property.
Our results complement and extend those obtained by Izumi in \cite{Izumi-I} and \cite{Izumi-II} in the unital setting, and by Nawata in \cite{Nawata} for C*-algebras $A$ that satisfy $A\subseteq \overline{\mathrm{GL}(\widetilde{A})}$.

\subsection{Equivariant *-homomorphisms}

Let $A$ and $B$ be C*-algebras and let $G$ be a compact group. Let $\alpha\colon G\to \Aut(A)$ and $\beta\colon G\to \Aut(B)$ be (strongly continuous) actions. Recall that a *-homomorphism $\phi\colon A\to B$ is said to be \emph{equivariant} if $\phi\circ\alpha_g=\beta_g\circ\phi$ for all $g\in G$.

\begin{definition}\label{df: B^G}
Let $A$ and $B$ be C*-algebras and let $\alpha\colon G\to \Aut(A)$ and $\beta\colon G\to \Aut(B)$ be actions of a compact group $G$. Let $\phi, \psi\colon A\to B$ be equivariant *-homomorphisms. We say that $\phi$ and $\psi$ are \emph{equivariantly approximately unitarily equivalent}, and denote this by $\phi\sim_{\mathrm{G-au}}\psi$, if for any finite subset $F\subseteq A$ and for any $\varepsilon>0$ there exists a unitary $u\in \widetilde{B^\beta}$ such that
$$\|\phi(a)-u^*\psi(a)u\|<\varepsilon,$$
for all $a\in F$.
\end{definition}

Note that when $G$ is the trivial group, this definition agrees with the standard definition of approximate unitary equivalence of *-homomorphisms. In this case we will omit the group $G$ in the notation $\sim_{\mathrm{G-au}}$, and write simply $\sim_{\mathrm{au}}$.

The following lemma can be proved using a standard semiprojectivity argument. Its proof is left to the reader.

\begin{lemma}\label{lem: unitaries can be lifted}
Let $A$ be a unital C*-algebra and let $u$ be a unitary in $A_\infty$. Given $\varepsilon>0$ and given a finite subset $F\subseteq A$, there exists a unitary $v\in A$ such that $\|va-av\|<\varepsilon$ for all $a\in F$. If moreover $A$ is separable, then there exists a sequence $(u_n)_{n\in\N}$ of unitaries in $A$ with
$$\lim\limits_{n\to\infty} \|u_na-au_n\|=0$$
for all $a\in A$, such that $\pi_A((u_n)_{n\in\N})=u$ in $A_\infty$.
\end{lemma}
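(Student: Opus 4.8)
The plan is to realize the ``standard semiprojectivity argument'' concretely: lift $u$ to a bounded sequence of elements of $A$ and then polish it into a sequence of unitaries by polar decomposition. (Equivalently one could invoke semiprojectivity of $C(\mathbb{T})$, the universal unital C*-algebra on one unitary generator, applied to the quotient $\ell^\infty(\N,A)\to A^\infty$ with respect to the ideals of finitely supported sequences; but the polar decomposition route is self-contained, so I will spell that one out.)

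First I would choose $(w_n)_{n\in\N}\in\ell^\infty(\N,A)$ with $\pi_A((w_n)_{n\in\N})=u$. Since $u$ is a unitary in $A^\infty$ we get $\|w_n^*w_n-1\|\to 0$ and $\|w_nw_n^*-1\|\to 0$, and since $u$ lies in $A'$ we get $\|w_na-aw_n\|\to 0$ for every $a\in A$. Pick $N$ so that for $n\ge N$ the spectra of $w_n^*w_n$ and of $w_nw_n^*$ are contained in $[1/2,3/2]$; then $w_n$ is invertible in $A$, the element $|w_n|:=(w_n^*w_n)^{1/2}$ is invertible with $\||w_n|^{-1}-1\|\le\||w_n|^{-1}\|\,\||w_n|-1\|\to 0$ by continuous functional calculus, and $v_n:=w_n|w_n|^{-1}$ is a unitary: $v_n^*v_n=|w_n|^{-1}(w_n^*w_n)|w_n|^{-1}=1$, and $v_nv_n^*=w_n(w_n^*w_n)^{-1}w_n^*=1$ using that $w_n$ is invertible. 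Setting $v_n=1$ for $n<N$, we have $\|v_n-w_n\|=\|w_n(|w_n|^{-1}-1)\|\to 0$, so $(v_n)_{n\in\N}$ again represents $u$, i.e.\ $\pi_A((v_n)_{n\in\N})=u$, and
\[
\|v_na-av_n\|\le 2\|v_n-w_n\|\,\|a\|+\|w_na-aw_n\|\longrightarrow 0\qquad\text{for every }a\in A.
\]

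From here both assertions follow. For the first, given $\varepsilon>0$ and a finite set $F\subseteq A$, the finitely many inequalities $\|v_na-av_n\|<\varepsilon$ with $a\in F$ all hold once $n$ is large, and one takes $v=v_n$ for such an $n$. For the second, one simply takes $(u_n)_{n\in\N}=(v_n)_{n\in\N}$: the displayed estimate together with $\pi_A((v_n)_{n\in\N})=u$ is exactly what is required. (In fact separability of $A$ plays no role in this argument; it is presumably included only because that is the setting in which the lemma is used.)

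I do not expect a substantive obstacle here. The only two points that require a little care are: (a) the finitely many indices $n<N$ where $w_n$ may fail to be invertible, which is harmless since redefining $v_n$ on a finite set does not change the class in $A^\infty$; and (b) the verification that $v_nv_n^*=1$, not merely $v_n^*v_n=1$, which genuinely uses that $w_n$ is invertible rather than only left-invertible --- this is precisely why one must also control $w_nw_n^*$ and not just $w_n^*w_n$.
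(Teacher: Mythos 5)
Your proof is correct, and since the paper explicitly leaves this lemma to the reader as ``a standard semiprojectivity argument,'' your lift-and-polar-decompose construction is precisely the standard argument being alluded to (it is the hands-on form of the semiprojectivity of $C(\mathbb{T})$ that you mention in passing). Your side observation that separability of $A$ is not actually needed for the second assertion is also accurate, since $u\in A'$ already gives $\|w_na-aw_n\|\to 0$ for every individual $a\in A$.
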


\begin{proposition}\label{prop: uniqueness}
Let $A$ and $B$ be C*-algebras and let $\alpha\colon G\to \Aut(A)$ and $\beta\colon G\to \Aut(B)$ be actions of a finite group $G$ such that $\beta$ has the Rokhlin property. Let $\phi,\psi\colon (A,\alpha) \to (B,\beta)$ be equivariant *-homomorphisms such that $\phi\sim_{\mathrm{au}}\psi$. Then $\phi\sim_{G-\mathrm{au}}\psi$.
\end{proposition}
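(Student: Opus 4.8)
The plan is to upgrade an ordinary approximate unitary equivalence to an equivariant one by averaging the implementing unitaries against Rokhlin elements for the action $\beta$ on the central sequence algebra. Fix a finite set $F \subseteq A$ and $\varepsilon > 0$; by replacing $F$ with $\bigcup_{g} \alpha_g(F)$ we may assume $F$ is $\alpha$-invariant. Since $\phi \sim_{\mathrm{au}} \psi$, we can find a unitary $u$ in $\widetilde{B^\infty}$ — more precisely, a sequence of unitaries in $\widetilde{B}$ whose image we call $u$ — such that $u^*\psi(a)u = \phi(a)$ for all $a$ in the separable subalgebra of $B^\infty$ generated by $\phi(A) \cup \psi(A)$. (Here I use the standard reindexing/saturation trick to promote the approximate statement to an exact one in the sequence algebra, exactly as in Lemma~\ref{lem: unitaries can be lifted}.) I would then pass to the relative commutant: because $\phi, \psi$ are equivariant and $F$ is $\alpha$-invariant, the C*-subalgebra $C \subseteq B^\infty$ generated by $\phi(F) \cup \psi(F)$ is $\beta$-invariant and separable.

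The key step is to apply Lemma~\ref{lem: Rokhlin equivalence}(iii) to the action $\beta$ on $B$, with the separable subalgebra taken large enough to contain $C$ and to control $u$: choose orthogonal positive contractions $r_g \in B^\infty \cap C'$, $g \in G$, with $\beta_g(r_h) = r_{gh}$ and $\sum_g r_g$ acting as a unit on $C$. Now form the "averaged" unitary
\[
w = \sum_{g \in G} r_g^{1/2}\, \beta_g(u)\, r_g^{1/2},
\]
computed in a further sequence algebra $(B^\infty)^\infty$ (or by a suitable double-sequence argument), together with a correction $(1 - \sum_g r_g)$ on the orthogonal complement so that $w$ is genuinely a unitary in $\widetilde{(B^\infty)^{\beta}}$ up to the relevant approximation. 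The orthogonality of the $r_g$ makes $w$ unitary (the cross terms vanish), and the relation $\beta_g(r_h) = r_{gh}$ makes $w$ fixed by $\beta$. The point is that for $a \in F$ one has $\phi(a), \psi(a)$ commuting with each $r_g$, so
\[
w^*\psi(a)w = \sum_{g} r_g^{1/2}\, \beta_g(u)^*\, \psi(a)\, \beta_g(u)\, r_g^{1/2}
= \sum_g r_g^{1/2}\, \beta_g\bigl(u^*\psi(\alpha_{g^{-1}}(a))u\bigr)\, r_g^{1/2},
\]
using equivariance of $\psi$ to move $\psi(a) = \beta_g(\psi(\alpha_{g^{-1}}(a)))$ inside. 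Since $\alpha_{g^{-1}}(a) \in F$, the inner expression equals $\beta_g(\phi(\alpha_{g^{-1}}(a))) = \phi(a)$, and summing with $\sum_g r_g$ acting as a unit on $\phi(a)$ gives $w^*\psi(a)w = \phi(a)$. Finally I would reindex back: the image of $w$ under a diagonal embedding yields, for each $\varepsilon$ and $F$, an honest unitary $v \in \widetilde{B^\beta}$ — the fixed point algebra of $B$ itself, obtained by lifting and using that $B^\beta$ is the quotient-compatible fixed-point algebra — with $\|\phi(a) - v^*\psi(a)v\| < \varepsilon$ on $F$; that is exactly $\phi \sim_{G\text{-}\mathrm{au}} \psi$.

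The main obstacle is the bookkeeping at the level of sequence algebras: one must lift the Rokhlin elements and the averaged unitary from $B^\infty$ (or $(B^\infty)^\infty$) back down to $\widetilde{B^\beta}$ in a way that keeps exact equivariance and keeps the approximation on $F$. This requires care because the Rokhlin elements live in a relative commutant of the sequence algebra, not in $B$, and because $w$ as written is only approximately unitary before the correction term is added and the non-unital case ($1 \notin B$) is handled via $\widetilde{B}$ and $\M(B)$. I would handle this with the now-standard reindexation argument (choosing the separable subalgebra in Lemma~\ref{lem: Rokhlin equivalence}(iii) to absorb all the data at each finite stage) so that the final unitary can be taken in $\widetilde{B^\beta}$ exactly; none of the individual estimates is deep, but assembling them cleanly is where the real work lies.
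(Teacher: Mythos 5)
Your proposal is correct and follows essentially the same route as the paper: replace the intertwining unitary $u$ by the Rokhlin average of its translates $\beta_g(u)$, observe that the result is $\beta$-invariant and still approximately intertwines $\phi$ and $\psi$ on $F$, and then lift it to a genuine unitary in $\widetilde{B^\beta}$. The only real difference is that the paper keeps $u=x+1_{\widetilde{B}}$ as an honest unitary in $\widetilde{B}$ with merely approximate intertwining, so the Rokhlin elements can be taken in $B_\infty=B^\infty\cap B'$ (no reindexation for separable subalgebras of $B^\infty$ and no double sequence algebra are needed), the average $v=\sum_{g}\beta_g(x)r_g+1_{\widetilde{B}}$ is exactly unitary and exactly fixed, and the descent to $\widetilde{B^\beta}$ is handled in one stroke by Lemma~\ref{lem: unitaries can be lifted}.
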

\begin{proof}
Let $F$ be a finite subset of $A$ and let $\varepsilon>0$. We have to show that there exists a unitary $w\in \widetilde{B^\beta}$ such that
$$\|\phi(a)-w^*\psi(a)w\|<\varepsilon,$$
for all $a\in F$. Set $F'=\bigcup\limits_{g\in G}\alpha_g(F)$, which is again a finite subset of $A$. Since $\phi\sim_{\mathrm{au}}\psi$, there exists a unitary $u\in \widetilde{B}$ such that
\begin{equation}\label{au} \|\phi(b)-u^*\psi(b)u\|<\varepsilon\end{equation}
for all $b\in F'$. Choose $x\in B$ and $\lambda\in \C$ of modulus 1 such that $u=x+\lambda 1_{\widetilde{B}}$. Then equation (\ref{au}) above
is satisfied if one replaces $u$ with $\overline{\lambda}u$. Thus, we may assume that the unitary $u$ has the form $u=x+1_{\widetilde{B}}$ for some $x\in B$.

Fix $g\in G$ and $a\in F$. Then $b=\alpha_{g^{-1}}(a)$ belongs to $F'$. Using equation (\ref{au}) and the fact that
$\phi$ and $\psi$ are equivariant, we get
$$\|\beta_{g^{-1}}(\phi(a))-u^*\beta_{g^{-1}}(\psi(a))u\|<\varepsilon.$$
By applying $\beta_g$ to the inequality above, we conclude that
$$\|\phi(a)-\beta_g(u)^*\psi(a)\beta_g(u)\|<\varepsilon$$
for all $a\in F$ and $g\in G$

Choose positive orthogonal contractions $(r_g)_{g\in G}\subseteq B_\infty$ as in the definition of the Rokhlin property for $\beta$, and set $v=\sum\limits_{g\in G}\beta_g(x)r_g+1_{\widetilde{B}}$. Using that $x_g+1_{\widetilde{B}}$ is a unitary in $\widetilde{B}$, one checks that
\begin{align*}
& v^*v=\sum\limits_{g\in G} \left(\beta_g(x^*x)r_g^2+\beta_g(x)r_g+\beta_g(x)r_g\right)+1_{\widetilde{B}}=1_{\widetilde{B}}.
\end{align*}
Analogously, we have $vv^*=1_{\widetilde{B}}$, and hence $v$ is a unitary in $\widetilde{B}$. For every $b\in B$, we have
$$v^*bv=\sum\limits_{g\in G}r_g\beta_g(u)^*b\beta_g(u).$$
Therefore,
$$\|\phi(a)-v^*\psi(a)v\|=\left\|\sum\limits_{g\in G}r_g\phi(a)-\sum\limits_{g\in G}r_g\beta_g(u)^*\psi(a)\beta_g(u)\right\|<\varepsilon,$$
for all $a\in F$ (here we are considering $\phi$ and $\psi$ as maps from $A$ to $(\widetilde B)^\infty$, by composing them with the natural inclusion of $B$ in $(\widetilde B)^\infty$). Since $v=\sum\limits_{g\in G}\beta_g(xr_e)+1_{\widetilde{B}}$, we have $v\in (\widetilde{B^\beta})^\infty\subseteq(\widetilde{B})^\infty$. By Lemma \ref{lem: unitaries can be lifted}, we can choose a unitary $w\in \widetilde{B^\beta}$ such that
$$\|\phi(a)-w^*\psi(a)w\|<\varepsilon,$$
for all $a\in F$, and the proof is finished.
\end{proof}

\begin{lemma}\label{lem: limit of homomorphisms}
Let $A$ and $B$ be C*-algebras and let $\psi\colon A\to B$ be a *-homomorphism. Suppose there exists a sequence $(v_n)_{n\in\N}$ of unitaries in $\widetilde{B}$ such that the sequence $(v_n\phi(x)v_n^*)_{n\in\N}$ converges in $B$ for all $x$ in a dense subset of $A$. Then there exists a *-homomorphism $\psi\colon A\to B$ such that
$$\lim\limits_{n\to\infty} v_n\phi(x)v_n^*=\psi(x)$$
for all $x\in A$.
\end{lemma}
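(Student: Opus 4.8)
The plan is to run a routine $\varepsilon/3$-argument, the only mild subtlety being to notice that conjugation by a unitary of $\widetilde B$ preserves the ideal $B$. For each $n\in\N$, define $\phi_n\colon A\to B$ by $\phi_n(x)=v_n\phi(x)v_n^*$ (here I read the map appearing in the hypothesis as $\phi$). Since $B$ is an ideal in $\widetilde B$, each $\phi_n$ does take values in $B$, and it is plainly a *-homomorphism; in particular it is contractive. By hypothesis there is a dense subset $D\subseteq A$ such that $(\phi_n(x))_{n\in\N}$ converges, hence is Cauchy, for every $x\in D$.

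First I would show that $(\phi_n(x))_{n\in\N}$ is Cauchy for every $x\in A$. Given $x\in A$ and $\varepsilon>0$, choose $y\in D$ with $\|x-y\|<\varepsilon/3$, and then pick $N\in\N$ so that $\|\phi_n(y)-\phi_m(y)\|<\varepsilon/3$ for all $m,n\ge N$. Using that each $\phi_n$ is contractive, for $m,n\ge N$ we get
$$\|\phi_n(x)-\phi_m(x)\|\le\|\phi_n(x-y)\|+\|\phi_n(y)-\phi_m(y)\|+\|\phi_m(y-x)\|<\varepsilon.$$
As $B$ is complete, this shows that the limit $\psi(x):=\lim_{n\to\infty}\phi_n(x)$ exists in $B$ for every $x\in A$, and of course $\psi(x)=\lim_n v_n\phi(x)v_n^*$ by construction.

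Finally I would verify that $\psi$ is a *-homomorphism, which is immediate upon passing to limits: additivity and homogeneity follow from those of the $\phi_n$; multiplicativity follows from joint continuity of multiplication on bounded sets (the relevant sequences are bounded since $\|\phi_n(x)\|\le\|x\|$), giving $\psi(xy)=\lim_n\phi_n(x)\phi_n(y)=\psi(x)\psi(y)$; and $\psi(x^*)=\lim_n\phi_n(x)^*=\psi(x)^*$ by continuity of the involution. There is no genuine obstacle in this argument; the one point worth stating explicitly is the observation, used at the outset, that $v\,b\,v^*\in B$ whenever $b\in B$ and $v$ is a unitary in $\widetilde B$, so that the limit $\psi$ indeed lands in $B$.
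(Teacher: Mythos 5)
Your proof is correct and is essentially the same routine $\varepsilon/3$-argument as the paper's: where the paper defines the limit map on the dense subalgebra of convergence, extends by continuity, and then checks the limit formula everywhere, you instead prove the Cauchy property everywhere first and verify the homomorphism identities in the limit, which amounts to the same computation. You also correctly read the hypothesis map as $\phi$ (the statement has a typo naming it $\psi$) and rightly flagged that conjugation by a unitary of $\widetilde B$ preserves the ideal $B$.
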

\begin{proof} Let
$$S=\{x\in A\colon (v_n\phi(x) v_n^*)_{n\in\N}\mbox{ converges in }B\}\subseteq A.$$
Then $S$ is a dense *-subalgebra of $A$. For each $x\in S$, denote by $\psi_0(x)$ the limit of the sequence $(v_n\phi(x)v_n^*)_{n\in\N}$. The map $\psi_0\colon S\to B$ is linear, multiplicative, preserves the adjoint operation, and is bounded by $\|\phi\|$, so it extends by continuity to a *-homomorphism $\psi \colon A\to B$. Given $a\in A$ and given $\varepsilon>0$, use density of $S$ in $A$ to choose $x\in S$ such that $\|a-x\|<\frac{\varepsilon}{3}$. Choose $N\in\N$ such that $\|v_N\phi(x)v_N^*-\psi(x)\|<\frac{\varepsilon}{3}$. Then
\begin{align*} \|\psi(a)-v_N\phi(a)v_N^*\|&\leq \|\psi(a-x)\|+\|\psi(x)-v_N\phi(x)v_N^*\|+\|v_N\phi(x)v_N^*-v_N\phi(a)v_N^*\|\\
&< \frac{\varepsilon}{3} + \frac{\varepsilon}{3}+\frac{\varepsilon}{3}=\varepsilon,\end{align*}
It follows that $\psi(a)=\lim\limits_{n\to\infty} v_n\phi(a)v_n^*$ for all $a\in A$, as desired.
\end{proof}

The unital case of the following proposition is \cite[Lemma 5.1]{Izumi-I}. Our proof for arbitrary C*-dynamical systems follows similar ideas.


\begin{proposition}\label{prop: existence}
Let $A$ and $B$ be C*-algebras and let $\alpha\colon G\to \Aut(A)$ and $\beta\colon G\to \Aut(B)$ be actions of a finite group $G$. Suppose that $A$ is separable and that $\beta$ has the Rokhlin property. Let $\phi\colon A\to B$ be a *-homomorphism such that $\beta_g\circ \phi\sim_{\mathrm{au}}\phi\circ \alpha_g$ for all $g\in G$. Then:
\begin{itemize}
\item[(i)] For any $\varepsilon>0$ and for any finite set $F\subseteq A$ there exists a unitary $u\in \widetilde{B}$ such that
\begin{align}\label{eq: phiphi}
\begin{aligned}
&\|(\beta_g\circ\Ad(w)\circ \phi)(x)-(\Ad(w)\circ\phi\circ \alpha_g)(x)\|<\varepsilon, \quad \forall g\in G,\, \forall x \in F,\\
&\|(\Ad(w)\circ\phi)(x)-\phi(x)\|<\varepsilon+\sup\limits_{g\in G}\|(\beta_g\circ\phi\circ\alpha_{g^{-1}})(x)-\phi(x)\|, \quad \forall x\in F.
\end{aligned}
\end{align}
\item[(ii)] There exists an equivariant *-homomorphism $\psi\colon A\to B$ that is approximately unitarily equivalent to $\phi$.
\end{itemize}
\end{proposition}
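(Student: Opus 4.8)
The plan is to prove (i) by a Rokhlin-averaging argument inside the sequence algebra, reusing the unitary construction from the proof of Proposition \ref{prop: uniqueness}, and then to deduce (ii) from (i) by a one-sided Elliott intertwining. It is convenient to encode the hypothesis through the \emph{defect homomorphisms} $\phi_g:=\beta_g\circ\phi\circ\alpha_{g^{-1}}\colon A\to B$: one has $\phi_e=\phi$, the cocycle identity $\beta_h\circ\phi_g=\phi_{hg}\circ\alpha_h$, and $\phi_g\sim_{\mathrm{au}}\phi$ for every $g\in G$ (the last being $\beta_g\circ\phi\sim_{\mathrm{au}}\phi\circ\alpha_g$ precomposed with $\alpha_{g^{-1}}$).

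For (i), the plan is as follows. First enlarge $F$ to the finite $\alpha$-invariant set $F'=\bigcup_{g\in G}\alpha_g(F)$ and fix a small $\delta>0$. For each $g\in G$ choose a unitary $u_g\in\widetilde B$ with $\|u_g\phi(a)u_g^*-\phi_g(a)\|<\delta$ for all $a\in F'$; after scaling by a unimodular constant we may take $u_g=x_g+1_{\widetilde B}$ with $x_g\in B$, and $u_e=1_{\widetilde B}$. Next apply Lemma \ref{lem: Rokhlin equivalence}(ii) to the finite set $F_0=\bigcup_{h\in G}\beta_h(\phi(F')\cup\{x_g:g\in G\})$ to obtain mutually orthogonal positive contractions $r_g\in B^\infty\cap F_0'$ with $\beta_g(r_h)=r_{gh}$ and $(\sum_h r_h)b=b$ for $b\in F_0$; in particular $\sum_h r_h$ acts as a unit on $\phi(F')$ and each $r_g$ commutes with every $\beta_h\phi(a)$, $a\in F'$, and with every $x_g$. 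Exactly as in the proof of Proposition \ref{prop: uniqueness}, the $u_g$ and $r_g$ then assemble (out of $\sum_g u_g r_g$ together with $1_{\widetilde B}-\sum_g r_g$) into a unitary $w\in(\widetilde B)^\infty$ for which $(\Ad(w)\circ\phi)(a)$ agrees on $F'$, to within $\delta$, with the averaged map $\Psi(a):=\sum_{g\in G}r_g\,\phi_g(a)$. Now $\Psi$ is \emph{exactly} equivariant, since $\beta_h(\Psi(a))=\sum_g\beta_h(r_g)\,\beta_h(\phi_g(a))=\sum_g r_{hg}\,\phi_{hg}(\alpha_h(a))=\Psi(\alpha_h(a))$ by the cocycle identity and $\beta_h(r_g)=r_{hg}$; and $\|\Psi(a)-\phi(a)\|\le\sup_{g\in G}\|\phi_g(a)-\phi(a)\|=\sup_{g\in G}\|\beta_g\phi\alpha_{g^{-1}}(a)-\phi(a)\|$ on $F'$, because $\sum_g r_g$ is a unit on $\phi(F')$ and commutes with each $\phi_g(a)-\phi(a)$. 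Combining these three facts with the choice $\delta<\varepsilon/2$ yields both displayed inequalities of (i) for $w$; finally $w$ is represented by a sequence of asymptotically unitary elements of $\widetilde B$, so a standard lifting argument (as in Lemma \ref{lem: unitaries can be lifted}) replaces $w$ by a genuine unitary of $\widetilde B$ with the same properties.

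For (ii), the plan is to iterate (i). Choose an increasing sequence $F_1\subseteq F_2\subseteq\cdots$ of finite $\alpha$-invariant subsets of $A$ with dense union and a summable sequence $(\varepsilon_n)_n$ of positive reals. Set $\phi_0=\phi$ and, inductively, apply (i) to $\phi_{n-1}$ — which is legitimate because $\phi_{n-1}=\Ad(w_{n-1}\cdots w_1)\circ\phi$ still satisfies $\beta_g\circ\phi_{n-1}\sim_{\mathrm{au}}\phi_{n-1}\circ\alpha_g$ (conjugation by a unitary of $\widetilde B$ preserves this, after conjugating the implementing unitaries) — with the set $F_n$ and tolerance $\varepsilon_n$, obtaining a unitary $w_n\in\widetilde B$; put $\phi_n=\Ad(w_n)\circ\phi_{n-1}$. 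The first inequality of (i) gives $\|\beta_g\phi_n(x)-\phi_n\alpha_g(x)\|<\varepsilon_n$ for $x\in F_n$, so the equivariance defect of $\phi_n$ on $F_n$ is under $\varepsilon_n$; the second inequality of (i) for $\phi_{n-1}$, combined with that bound at stage $n-1$, gives $\|\phi_n(x)-\phi_{n-1}(x)\|<\varepsilon_n+\varepsilon_{n-1}$ for $x\in F_{n-1}$. Hence $(\phi_n(a))_n$ converges for each $a$ in the dense union, and then for all $a\in A$ since $\|\phi_n(a)-\phi_n(x)\|\le\|a-x\|$; the limit $\psi(a)=\lim_n(w_n\cdots w_1)\phi(a)(w_n\cdots w_1)^*$ is a $*$-homomorphism by Lemma \ref{lem: limit of homomorphisms}, is equivariant upon letting $n\to\infty$ in the defect bound on the dense union, and satisfies $\psi\sim_{\mathrm{au}}\phi$ by the very form of the convergence.

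The main obstacle lies in part (i): constructing the unitary $w$ and verifying that conjugation by it reproduces the averaged homomorphism $\Psi$ up to $\delta$. This rests on the orthogonality of the $r_g$, the relation $\beta_g(r_h)=r_{gh}$, and the fact that $\sum_h r_h$ acts as a unit on $\phi(F')$, and it requires some care in the non-unital setting, where the $r_g$ are only positive contractions rather than projections — this is exactly the computation performed in the proof of Proposition \ref{prop: uniqueness}. The rest is $\varepsilon$-bookkeeping, but one must be sure to extract from (i) the \emph{sharp} second inequality, with error ``$\varepsilon$ plus the equivariance defect of $\phi$ at $x$'' and not merely ``$\varepsilon$ plus a bounded quantity''; it is precisely this sharp form that makes the telescoping estimate in (ii) summable.
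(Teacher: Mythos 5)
Your proposal is correct and follows essentially the same route as the paper: the same Rokhlin-averaged unitary $\sum_g x_g r_g + 1_{\widetilde B}$ in $(\widetilde B)^\infty$, lifted via Lemma \ref{lem: unitaries can be lifted}, followed by the same telescoping iteration and an appeal to Lemma \ref{lem: limit of homomorphisms}. Your reorganization of part (i) around the exactly equivariant averaged map $\Psi=\sum_g r_g\phi_g$ and the cocycle identity for the defect homomorphisms is only a cleaner packaging of the termwise comparison the paper carries out directly, and you correctly isolate the sharp form of the second estimate needed for summability in (ii).
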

\begin{proof}
(i) Let $F$ be a finite subset of $A$ and let $\varepsilon>0$. Set $F'=\bigcup\limits_{g\in G}\alpha_g(F)$, which is a finite subset of $A$. Since $\beta_g\circ \phi\sim_{\mathrm{au}}\phi\circ\alpha_g$ for all $g\in G$, there exist unitaries $(u_g)_{g\in G}\subseteq \widetilde{B}$ such that
$$\left\|(\beta_g\circ\phi)(a)-(\Ad(u_g)\circ\phi\circ \alpha_g)(a)\right\|<\frac{\varepsilon}{2},$$
for all $a\in F'$ and $g\in G$. Upon replacing $u_g$ with a scalar multiple of it, one can assume that there are $(x_g)_{g\in G}\subseteq B$ such that $u_g=x_g+1_{\widetilde{B}}$ for all $g\in G$. For $a\in F$ and $g, h\in G$, we have
\begin{align*}
\|(\Ad(u_g)&\circ\phi\circ\alpha_h)(a)-(\beta_h\circ\Ad(u_{h^{-1}g})\circ\phi)(a)\| \\
&=\left\|(\Ad(u_g)\circ\phi\circ\alpha_g)(\alpha_{g^{-1}h}(a))-(\beta_h\circ\Ad(u_{h^{-1}g})\circ\phi\circ\alpha_{h^{-1}g})(\alpha_{g^{-1}h}(a))\right\|\\
&\le \left\|(\Ad(u_g)\circ\phi\circ\alpha_g)(\alpha_{g^{-1}h}(a))-(\beta_g\circ\phi)(\alpha_{g^{-1}h}(a))\right\|\\
& \ \ \ \ +\left\|(\beta_g\circ\phi)(\alpha_{g^{-1}h}(x))-(\beta_h\circ\Ad(u_{h^{-1}g})\circ\phi\circ\alpha_{h^{-1}g})(\alpha_{g^{-1}h}(x))\right\|\\
&\le \frac\varepsilon 2+\frac\varepsilon 2=\varepsilon.
\end{align*}
Choose positive orthogonal contractions $(r_g)_{g\in G}\subseteq B_\infty$ as in the definition of the Rokhlin property for $\beta$, and set
$$u=\sum\limits_{g\in G}r_gx_g+1_{\widetilde{B}}\in (\widetilde{B})^\infty.$$
Using that $x_g+1_{\widetilde{B}}$ is a unitary in $\widetilde{B}$, one checks that
$$u^*u=1_{\widetilde{B}}+\sum\limits_{g\in G}(r_g^2x_g^*x_g+r_gx_g+r_gx_g^*)=1_{\widetilde{B}}.$$
Analogously, one also checks that $uu^*=1_{\widetilde{B}}$, thus showing that $u$ is a unitary in $(\widetilde{B})^\infty$. The map $\Ad(u)$ can be written
in terms of the maps $\Ad(u_g)$ and the contractions $(r_g)_{g\in G}$, as follows:
$$(\Ad(u))(x)=uxu^*=\sum\limits_{g\in G}(u_gxu_g^*) r_g=\sum\limits_{g\in G}(\Ad(u_g))(x)r_g,$$
for all $x\in A$. Now for $a\in F$ and considering $\phi$ as a map from $A$ to $(\widetilde B)^\infty$ by composing it with the natural inclusion of $B$ in $(\widetilde B)^\infty$, we have the following identities
\begin{align*}
&(\beta_h\circ\Ad(u)\circ\phi)(a)=\sum\limits_{g\in G}r_{hg}(\beta_h\circ\Ad(u_g)\circ\phi)(a)=\sum\limits_{g\in G}r_g(\beta_h\circ\Ad(u_{h^{-1}g})\circ\phi)(a),\\
& (\Ad(u)\circ\phi\circ\alpha_h)(a)=\sum\limits_{g\in G}r_g(\Ad(u_g)\circ\phi\circ\alpha_h)(a).
\end{align*}
Therefore,
\begin{align*}
\|(\beta_h\circ\Ad(u)\circ\phi)(a) &- (\Ad(u)\circ\phi\circ\alpha_h)(a)\|\\
& \le \sup\limits_{g\in G}\left\|(\Ad(u_g)\circ\phi\circ\alpha_h)(a)-(\beta_h\circ\Ad(u_{h^{-1}g})\circ\phi)(a)\right\|<\varepsilon.\end{align*}
This in turn implies that
\begin{align*}
&\|(\Ad(u)\circ\phi)(a)-\phi(a)\|=\left\|\sum\limits_{g\in G}r_g((\Ad(u_g)\circ\phi)(a)-\phi(a))\right\|\\
&\le \sup\limits_{g\in G}\left\|(\Ad(u_g)\circ\phi)(a)-\phi(a)\right\|\\
&\le\sup\limits_{g\in G} \left(\left\|(\Ad(u_g)\circ\phi\circ\alpha_g)(\alpha_{g^{-1}}(a))-(\beta_g\circ\phi)(\alpha_{g^{-1}}(a))\right\|+\left\|(\beta_g\circ\phi\circ\alpha_{g^{-1}})(a)-\phi(a)\right\|\right)\\
&\le \varepsilon + \sup\limits_{g\in G}\left\|(\beta_g\circ\phi\circ\alpha_{g^{-1}})(a)-\phi(a)\right\|.
\end{align*}
We have shown that the inequalities in \eqref{eq: phiphi} hold for a unitary $u\in (\widetilde{B})^\infty$. By Lemma \ref{lem: unitaries can be lifted}, we can replace $u$ with a unitary in $w\in \widetilde{B}$ in such a way that both inequalities still hold for $w$ in place of $u$.


(ii) Let $(F_n)_{n\in \N}$ be an increasing sequence of finite subsets of $A$ whose union is dense in $A$. Upon replacing each $F_n$ with $\bigcup\limits_{g\in G}\alpha_g(F_n)$, we may assume that $\alpha_g(F_n)=F_n$ for all $g\in G$ and $n\in \N$. Set $\phi_1=\phi$ and find a unitary $u_1\in \widetilde{B}$ such that the conclusion of the first part of the proposition is satisfied with $\phi_1$ and $\varepsilon=1$. Set $\phi_2=\Ad(u_1)\circ \phi_1$, and find a unitary $u_2\in \widetilde{B}$ such that the conclusion of the first part of the proposition is satisfied with $\phi_2$ and $\varepsilon=\frac{1}{2}$. Iterating this process, there exist *-homomorphisms $\phi_n\colon A\to B$ with $\phi_1=\phi$ and unitaries $(u_n)_{n\in \N}$ in $\widetilde{B}$ such that $\phi_{n+1}=\Ad(u_n)\circ \phi_{n}$, for all $n\in \N$, which moreover for all $n\in \N$ satisfy
\begin{align*}
&\|(\beta_g\circ\phi_n)(x)-(\phi_n\circ\alpha_g)(x)\|<\frac{1}{2^n}\end{align*}
for all $g\in G$ and for all $x\in F_n$, and
\begin{align*}
\|\phi_{n+1}(x)-\phi_{n}(x)\|<\frac{3}{2^n}
\end{align*}
for all $x\in F_n$. For each $n\in \N$ set $v_n=u_n\cdots u_1$. Then the sequence of unitaries $(v_n)_{n\in\N}$ in $\widetilde{B}$ and the *-homomorphism
$\phi\colon A\to B$ satisfy the hypotheses of Lemma \ref{lem: limit of homomorphisms}, so it follows that the sequence $(\phi_n)_{n\in \N}$
converges to a *-homomorphism $\psi\colon A\to B$ that satisfies $\beta_g\circ\psi=\psi\circ\alpha_g$ for all $g\in G$; that is, $\psi$ is equivariant. Since each $\phi_n$ is unitarily equivalent to $\phi$, we conclude that $\phi$ and $\psi$ are approximately unitarily equivalent.
\end{proof}

\subsection{Categories of C*-dynamical systems and abstract classification}
Let $G$ be a second countable compact group and let $\mathbf A$ denote the category of separable C*-algebras. Let us denote by $\mathbf A_G $ the category whose objects are $G$-C*-dynamical systems $(A, \alpha)$, that is, $A$ is a C*-algebra and $\alpha\colon G\to \mathrm{Aut}(A)$ is a strongly continuous action, and whose morphisms are equivariant *-homomorphisms. We use the notation $\phi\colon (A,\alpha)\to (B,\beta)$ to denote equivariant *-homomorphisms $\phi\colon A\to B$. Approximate unitary equivalence of maps in this category is given in Definition \ref{df: B^G}.

If $\mathbf{B}$ is a subcategory of $\mathbf A$, we denote by $\mathbf B_G $ the full subcategory of $\mathbf A_G $ whose objects are C*-dynamical systems $(A,\alpha)$ with $A$ in $\mathbf B$, and whose morphisms are given by
$$\mathrm{Hom}_{\mathbf B_G }((A,\alpha),(B,\beta))=\mathrm{Hom}_{\mathbf A_G }((A,\alpha),(B,\beta)).$$

\begin{definition}
Let $\mathbf B$ be a subcategory of $\mathbf A$. Let $\mathrm F\colon \mathbf B_G \to \mathbf C$ be a functor from the category $\mathbf B_G$ to a category $\mathbf C$.
We say that the functor $\mathrm F$ {\it classifies homomorphisms} if:
\begin{itemize}
\item[(a)] For every pair of objects $(A,\alpha)$ and $(B,\beta)$ in $\mathbf B_G $ and for every morphism
$$\lambda\colon \mathrm F(A, \alpha)\to \mathrm F(B,\beta)$$
in $\mathbf C$, there exists a homomorphism $\phi\colon (A, \alpha)\to (B,\beta)$ in $\mathbf B_G $ such that $\mathrm F(\phi)=\lambda$.
\item[(b)] For every pair of objects $(A,\alpha)$ and $(B,\beta)$ in $\mathbf B_G $ and every pair of homomorphisms
$$\phi,\psi\colon (A, \alpha)\to (B,\beta),$$
one has $\mathrm F(\phi)=\mathrm F(\psi)$ if and only if $\phi\sim_{\mathrm{G-au}}\psi$.
\end{itemize}
We say that the functor $\mathrm F$ \emph{classifies isomorphisms} if it satisfies (a) and (b) above for ismorphisms instead of homomorphisms (such a functor is a \emph{strong classifying functor} in the sense of Elliott (see \cite{Elliott})).
\end{definition}


Let $\mathbf C_1$ and $\mathbf C_2$ be two categories. Recall that a functor $\mathrm F\colon \mathbf C_1\to \mathbf C_2$ is said to be \emph{sequentially continuous} if whenever $C=\varinjlim (C_n,\theta_n)$ in $\mathbf C_1$ for some sequential direct system $(C_n,\theta_n)_{n\in\N}$ in $\mathbf C_1$, then the inductive limit $\varinjlim (\mathrm F(C_n),\mathrm F(\theta_n))$ exists in $\mathbf C_2$, and one has
$$\mathrm F(\varinjlim (C_n,\theta_n))=\varinjlim (\mathrm F(C_n),\mathrm F(\theta_n)).$$

The following theorem is a consequence of \cite[Theorem 3]{Elliott}.

\begin{theorem}\label{thm: isomorphism}
Let $G$ be a second countable compact group, let $\mathbf B$ be a subcategory of $\mathbf A$, let $\mathbf B_G$ be the associated category of C*-dynamical systems, and
let $\mathbf C$ be a category in which inductive limits of sequences exist.
Let $\mathrm F\colon \mathbf B_G \to \mathbf C$ be a sequentially continuous functor that classifies homomorphisms. Then $\mathrm F$ classifies isomorphisms.
\end{theorem}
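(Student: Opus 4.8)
The plan is to reduce Theorem \ref{thm: isomorphism} to the classical Elliott intertwining argument, exactly as packaged in \cite[Theorem 3]{Elliott}, by verifying that the data at hand fit the hypotheses of that abstract result. The key point is that a sequentially continuous functor $\mathrm F$ that classifies homomorphisms (in the sense of conditions (a) and (b) of the definition, with $\sim_{\mathrm{G-au}}$ as the relevant equivalence of morphisms) is precisely an Elliott-type invariant with an "approximate uniqueness" and an "existence" statement, and Elliott's machine upgrades such an invariant from classifying homomorphisms up to approximate unitary equivalence to classifying isomorphisms. So the proof is essentially a citation, with the bulk of the work being to set up the correspondence between our notion of $\sim_{\mathrm{G-au}}$ and the abstract "path of morphisms" or "approximately unitarily equivalent" relation used in \cite{Elliott}.

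Concretely, I would proceed as follows. First, fix objects $(A,\alpha)$ and $(B,\beta)$ in $\mathbf B_G$ together with an isomorphism $\lambda\colon \mathrm F(A,\alpha)\to \mathrm F(B,\beta)$ in $\mathbf C$. Applying property (a) of "classifies homomorphisms" to $\lambda$ and to $\lambda^{-1}$, we obtain morphisms $\phi\colon (A,\alpha)\to(B,\beta)$ and $\psi\colon (B,\beta)\to(A,\alpha)$ in $\mathbf B_G$ with $\mathrm F(\phi)=\lambda$ and $\mathrm F(\psi)=\lambda^{-1}$. Then $\mathrm F(\psi\circ\phi)=\lambda^{-1}\circ\lambda=\mathrm F(\mathrm{id}_A)$ and $\mathrm F(\phi\circ\psi)=\mathrm F(\mathrm{id}_B)$, so by property (b) we get $\psi\circ\phi\sim_{\mathrm{G-au}}\mathrm{id}_A$ and $\phi\circ\psi\sim_{\mathrm{G-au}}\mathrm{id}_B$. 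This is precisely the situation in which the Elliott two-sided intertwining argument applies: one builds, inductively, finite subsets, tolerances, and equivariant unitaries (here in the relevant unitized fixed-point algebras, as dictated by Definition \ref{df: B^G}) realizing the approximate unitary equivalences on larger and larger finite sets, and then conjugates $\phi$ and $\psi$ by the resulting unitaries so that the compositions converge to isomorphisms in $\mathbf B_G$ that are mutually inverse and still induce $\lambda$, $\lambda^{-1}$ on $\mathrm F$. Sequential continuity of $\mathrm F$ is used to ensure that the inductive-limit object one constructs in the intertwining is identified, under $\mathrm F$, with the appropriate inductive limit in $\mathbf C$, so that the resulting isomorphism does induce $\lambda$.

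For part (a) of "classifies isomorphisms", the intertwining produces an isomorphism $\Phi\colon (A,\alpha)\to(B,\beta)$ in $\mathbf B_G$ with $\mathrm F(\Phi)=\lambda$. For part (b), if $\Phi,\Psi\colon(A,\alpha)\to(B,\beta)$ are isomorphisms with $\mathrm F(\Phi)=\mathrm F(\Psi)$, then they are in particular homomorphisms with the same image under $\mathrm F$, so property (b) for homomorphisms gives $\Phi\sim_{\mathrm{G-au}}\Psi$ directly; the converse is immediate since $\mathrm F$ is a functor and $\mathrm F$ is invariant under $\sim_{\mathrm{G-au}}$ by hypothesis. Thus conditions (a) and (b) hold for isomorphisms, which is exactly the assertion.

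The main obstacle is bookkeeping rather than conceptual: one must check that the abstract framework of \cite[Theorem 3]{Elliott} is flexible enough to accommodate equivariant $*$-homomorphisms and the specific equivalence relation $\sim_{\mathrm{G-au}}$ of Definition \ref{df: B^G}, in which unitaries are required to lie in $\widetilde{B^\beta}$ rather than in $\widetilde B$. This is a matter of observing that $\mathbf A_G$ (and hence $\mathbf B_G$) is a category with sequential inductive limits, that $\sim_{\mathrm{G-au}}$ is compatible with composition and with inductive limits, and that Elliott's argument only uses these formal properties together with the existence and uniqueness statements encoded in (a) and (b); once this is granted, the theorem follows verbatim from \cite[Theorem 3]{Elliott} applied to the category $\mathbf B_G$ in place of a category of plain C*-algebras.
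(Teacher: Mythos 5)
Your proposal matches the paper's proof: both reduce the statement to \cite[Theorem 3]{Elliott} by verifying that the category $\mathbf B_G$, with $\sim_{\mathrm{G-au}}$ implemented by conjugation by unitaries in the unitized fixed-point algebras, fits the hypotheses of that abstract intertwining result. The paper is slightly more explicit about which hypotheses must be checked (metrizability of the equivariant Hom-sets via separability of the algebras and second countability of $G$, the choice of inner automorphisms, and the identification of the classifying category), but the route is the same.
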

\begin{proof}
Let us briefly see that the conditions of \cite[Theorem 3]{Elliott} are satisfied for the category $\mathbf B_G$. First, using that the algebras in $\mathbf B_G$ are separable and that the group is second countable we can see that the set of equivariant *-homomorphisms between two C*-algebras in $\mathbf B_G$ is metrizable. Also, by taking the inner automorphisms of a C*-dynamical systems in $\mathbf B_G$ to be conjugation by unitaries in the unitization of the fixed point algebra of the given dynamical system, one can easily see that these automorphisms satisfy the conditions of \cite[Theorem 3]{Elliott}. Finally note that the category $\mathbf D$ whose objects are objects of $\mathbf C$ of the form $\mathrm F(A,\alpha)$ for some C*-dynamical system $(A, \alpha)$ in $\mathbf B_G$, and whose morphisms between two objects $\mathrm F(A,\alpha)$ and $\mathrm F(B,\alpha)$ are all the maps of the form $\mathrm F(\phi)$ for some equivariant *-homomorphism $\phi\colon (A, \alpha)\to (B, \beta)$, is just the classifying category of $B_G$ (in the sense of \cite{Elliott}), since $\mathrm F$ classifies homomorphisms by assumption. Therefore, by \cite[Theorem 3]{Elliott} the functor $\mathrm F$ is a strong classifying functor; in other words, it classifies *-isomorphisms.
\end{proof}

\begin{definition}\label{df: C^G, RB^G}
Let $G$ be a compact group. Let $\mathbf C$ be a category and let $\mathbf C_G $ denote the category whose objects are pairs $(C,\gamma)$,
where $C$ is an object in $\mathbf C$ and $\gamma\colon G\to \mathrm{Aut}(C)$ is a group homomorphism, also called an action of $G$ on $C$.
(We do not require any kind of continuity for this action since $C$ does not a priori have a topology.) The morphisms of $\mathbf C_G $ consist of
the morphisms of $\mathbf{C}$ that are equivariant.

Let $\mathbf B$ be a subcategory of $\mathbf A$ and let $\mathbf B_G$ be the associated category of C*-dynamical systems. Let $\mathrm F \colon \mathbf{B} \to \mathbf C$ be a functor.
Then $\mathrm F$ induces a functor $\mathrm F_G\colon \mathbf B_G \to \mathbf C_G $ as follows:
\begin{itemize}
\item[(i)] For an object $(A,\alpha)$ in $\mathbf B_G $, define an action $\mathrm F(\alpha)\colon G\to \mathrm{Aut}(\mathrm F(A))$ by
$(\mathrm F(\alpha))_g=\mathrm F(\alpha_g)$ for all $g\in G$. We then set $\mathrm F_G (A, \alpha)=(\mathrm F(A),\mathrm F(\alpha))$;
\item[(ii)] For a morphism $\phi\in \Hom_{\mathbf B_G }((A,\alpha),(B,\beta))$, we set $\mathrm F_G(\phi)= \mathrm F (\phi)$.
\end{itemize}
If $G$ is a finite group, we let $\mathbf{RB}_G$ denote the subcategory of $\mathbf B_G $ consisting of those C*-dynamical systems $(A,\alpha)$ in $\mathbf B_G $ with the Rokhlin property.
\end{definition}


The next theorem is a restatement, in the categorical setting, of Proposition \ref{prop: uniqueness} and Proposition \ref{prop: existence} (ii).

\begin{theorem}\label{thm: mainclassification}
Let $G$ be a finite group. Let $\mathbf B$, $\mathbf B_G$, $\mathbf{RB}_G$, $\mathbf C$, and $\mathbf C_G $ be as in Definition \ref{df: C^G, RB^G}. Let $\mathrm F\colon \mathbf B\to \mathbf C$ be a functor that classifies homomorphisms.
\begin{itemize}
\item[(i)] Let $(A, \alpha)$ be an object in $\mathbf B_G $ and let $(B,\beta)$ be an object in $\mathbf{RB}_G$.
\begin{itemize}
\item[(a)] For every morphism $\gamma\colon (\mathrm F(A), \mathrm F(\alpha))\to (\mathrm F(B), \mathrm F(\beta))$ in $\mathbf C_G $, there exists a morphism $\phi\colon (A, \alpha)\to(B, \beta)$ in $\mathbf B_G $ such that $\mathrm F_G(\phi)=\gamma$.
\item[(b)] If $\phi, \psi\colon (A, \alpha)\to(B, \beta)$ are morphisms in $\mathbf B_G $ such that $\mathrm F_G(\phi)=\mathrm F_G(\psi)$, then $\phi\sim_{G-\mathrm{au}}\psi$.
\end{itemize}
\item[(ii)] The restriction of the functor $\mathrm F_G$ to $\mathbf{RB}_G$ classifies homomorphisms.
\end{itemize}
\end{theorem}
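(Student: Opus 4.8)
The plan is to read this statement as a categorical repackaging of Proposition \ref{prop: uniqueness} and Proposition \ref{prop: existence}(ii): all of the analytic work has already been done, and what remains is to lift morphisms non-equivariantly using the hypothesis on $\mathrm F$, correct the lift to an equivariant one by invoking the Rokhlin property of the codomain, and check that the invariant $\mathrm F_G$ is unchanged by each correction. I expect no genuine obstacle; the only points needing care are a short functoriality computation and the observation that $\mathrm F$ sends equivariant *-homomorphisms to equivariant morphisms.

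To prove (i)(a), I would start from the equivariant morphism $\gamma\colon(\mathrm F(A),\mathrm F(\alpha))\to(\mathrm F(B),\mathrm F(\beta))$ and first regard it merely as a morphism $\mathrm F(A)\to\mathrm F(B)$ in $\mathbf C$. Since $\mathrm F\colon\mathbf B\to\mathbf C$ classifies homomorphisms, there is a *-homomorphism $\phi_0\colon A\to B$ with $\mathrm F(\phi_0)=\gamma$. This $\phi_0$ need not be equivariant, but for each $g\in G$ the functoriality of $\mathrm F$ together with the equivariance of $\gamma$ gives
\[
\mathrm F(\beta_g\circ\phi_0)=\mathrm F(\beta_g)\circ\gamma=\gamma\circ\mathrm F(\alpha_g)=\mathrm F(\phi_0\circ\alpha_g),
\]
so the uniqueness clause of the classification hypothesis (applied with trivial group) yields $\beta_g\circ\phi_0\sim_{\mathrm{au}}\phi_0\circ\alpha_g$ for every $g\in G$. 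As $A$ is separable (it is an object of $\mathbf A$) and $\beta$ has the Rokhlin property, Proposition \ref{prop: existence}(ii) produces an equivariant *-homomorphism $\psi\colon(A,\alpha)\to(B,\beta)$ with $\psi\sim_{\mathrm{au}}\phi_0$. Applying the uniqueness clause once more gives $\mathrm F(\psi)=\mathrm F(\phi_0)=\gamma$, and since $\psi$ is equivariant the morphism $\mathrm F(\psi)=\mathrm F_G(\psi)$ lives in $\mathbf C_G$ and equals $\gamma$ there; thus $\phi:=\psi$ is the required lift.

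For (i)(b), if $\phi,\psi\colon(A,\alpha)\to(B,\beta)$ satisfy $\mathrm F_G(\phi)=\mathrm F_G(\psi)$, then $\mathrm F(\phi)=\mathrm F(\psi)$, hence $\phi\sim_{\mathrm{au}}\psi$ by the classification hypothesis; since $\beta$ has the Rokhlin property, Proposition \ref{prop: uniqueness} promotes this to $\phi\sim_{G-\mathrm{au}}\psi$. Finally, for (ii) I would simply check the two clauses of ``classifies homomorphisms'' for the restriction $\mathrm F_G|_{\mathbf{RB}_G}\colon\mathbf{RB}_G\to\mathbf C_G$. Clause (a) is exactly (i)(a): given objects of $\mathbf{RB}_G$ and a morphism $\gamma$ between their images, (i)(a) produces a lift $\phi$ in $\mathbf B_G$, which is automatically a morphism of $\mathbf{RB}_G$ because $\mathbf{RB}_G$ is a full subcategory of $\mathbf B_G$. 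In clause (b), the forward implication $\mathrm F_G(\phi)=\mathrm F_G(\psi)\Rightarrow\phi\sim_{G-\mathrm{au}}\psi$ is (i)(b), and the reverse implication is immediate: if $\phi\sim_{G-\mathrm{au}}\psi$ then in particular $\phi\sim_{\mathrm{au}}\psi$, since unitaries in $\widetilde{B^\beta}$ are unitaries in $\widetilde B$, so $\mathrm F(\phi)=\mathrm F(\psi)$ by the classification hypothesis, i.e. $\mathrm F_G(\phi)=\mathrm F_G(\psi)$. The main ``obstacle'', such as it is, is purely bookkeeping: keeping track of when equivariance is used (only in the displayed functoriality identity and in the appeals to Propositions \ref{prop: existence} and \ref{prop: uniqueness}) and noting that the separability needed for Proposition \ref{prop: existence} is guaranteed by $\mathbf B\subseteq\mathbf A$.
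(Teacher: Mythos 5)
Your proposal is correct and follows essentially the same route as the paper: lift $\gamma$ non-equivariantly via the classification hypothesis, use functoriality to get $\beta_g\circ\phi_0\sim_{\mathrm{au}}\phi_0\circ\alpha_g$, correct by Proposition \ref{prop: existence}(ii), and deduce uniqueness from Proposition \ref{prop: uniqueness}. Your explicit verification of the reverse implication in part (ii) is a detail the paper leaves to the reader, but it is handled correctly.
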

\begin{proof}
(i) Let $(A, \alpha)$ be an object in $\mathbf B_G $ and let $(B,\beta)$ be an object in $\mathbf{RB}_G$.

(a) Let $\gamma\colon (\mathrm F(A), \mathrm F(\alpha))\to (\mathrm F(B), \mathrm F(\beta))$ be a morphism in $\mathbf C_G $. Using that $\mathrm F\colon \mathbf B\to \mathbf C$ classifies homomorphisms, choose a *-homomorphism $\psi\colon A\to B$ such that $\mathrm F(\psi)=\gamma$. Note that
$$\mathrm F(\psi\circ\alpha_g)=\mathrm F(\psi)\circ\mathrm F(\alpha_g)=\mathrm F(\beta_g)\circ \mathrm F(\psi)=\mathrm F(\beta_g\circ\psi),$$
for all $g\in G$. Using again that $\mathrm F$ classifies homomorphisms, we conclude that $\psi\circ\alpha_g$ and $\beta_g\circ\psi$ are approximately unitarily equivalent for all $g\in G$. Therefore, by part (ii) of Proposition \ref{prop: existence} there exists an equivariant *-homomorphism $\phi\colon (A,\alpha)\to (B,\beta)$ such that $\phi$ and $\psi$ are approximately unitarily equivalent. Thus $\phi$ is a morphism in $\mathbf B_G $ and
$$\mathrm F_G(\phi)=\mathrm F(\phi)=\mathrm F(\psi)=\gamma,$$
as desired.

(b) Let $\phi, \psi\colon (A,\alpha)\to(B,\beta)$ be morphisms in $\mathbf B_G $ such that $\mathrm F_G(\phi)=\mathrm F_G(\psi)$. Then $\phi\sim_{\mathrm{au}}\psi$ because $\mathrm F$ classifies homomorphisms and $\mathrm F$ agrees with $\mathrm F_G$ on morphisms. It then follows from Proposition \ref{prop: uniqueness} that $\phi\sim_{G-\mathrm{au}}\psi$.

Part (ii) clearly follows from (i).
\end{proof}

\begin{lemma}\label{lem: categories}
Let $G$ be a compact group, let $\Lambda$ be a directed set and let $\mathbf C$ be a category where inductive limits over $\Lambda$ exist.
Let $\mathbf C_G $ be the associated category as in Definition \ref{df: C^G, RB^G}. Then:
\begin{itemize}
\item[(i)] Inductive limits over $\Lambda$ exist in $\mathbf C_G $.
\item[(ii)] If $\mathbf D$ is a category where inductive limits over $\Lambda$ exist and $\mathrm F\colon \mathbf C\to \mathbf D$ is a functor that preserves direct limits over $\Lambda$, then the associated functor $\mathrm F_G\colon \mathbf C_G \to \mathbf D_G $ also preserves direct limits over $\Lambda$.
\end{itemize}
\end{lemma}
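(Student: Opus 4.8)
\emph{Proof proposal.} The plan is to show that inductive limits in $\mathbf C_G$ are computed ``on underlying objects'': one takes the inductive limit in $\mathbf C$ of the system obtained by forgetting the actions, equips it with the canonically induced $G$-action, and then checks the universal property in $\mathbf C_G$ by repeated appeals to the uniqueness clause of the universal property of colimits in $\mathbf C$.

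For (i), I would begin with an inductive system $\big((C_\lambda,\gamma^\lambda),\theta_{\lambda,\mu}\big)_{\lambda\le\mu}$ in $\mathbf C_G$ and, forgetting the actions, pass to the system $(C_\lambda,\theta_{\lambda,\mu})$ in $\mathbf C$, with inductive limit $\big(C,(\theta_{\lambda,\infty})_{\lambda\in\Lambda}\big)$. For each $g\in G$, equivariance of the connecting maps gives $\theta_{\lambda,\mu}\circ\gamma^\lambda_g=\gamma^\mu_g\circ\theta_{\lambda,\mu}$, so $(\theta_{\lambda,\infty}\circ\gamma^\lambda_g)_\lambda$ is a cocone on the system, and the universal property produces a unique morphism $\gamma_g\colon C\to C$ with $\gamma_g\circ\theta_{\lambda,\infty}=\theta_{\lambda,\infty}\circ\gamma^\lambda_g$ for all $\lambda$. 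Using uniqueness in the universal property one checks $\gamma_e=\mathrm{id}_C$ and $\gamma_g\circ\gamma_h=\gamma_{gh}$ (both sides induce the cocone $(\theta_{\lambda,\infty}\circ\gamma^\lambda_{gh})_\lambda$); in particular each $\gamma_g$ is invertible, so $\gamma\colon G\to\mathrm{Aut}(C)$ is a group homomorphism, $(C,\gamma)$ is an object of $\mathbf C_G$, and the $\theta_{\lambda,\infty}$ are equivariant by construction. Finally, given any compatible cocone of equivariant morphisms $\eta_\lambda\colon(C_\lambda,\gamma^\lambda)\to(D,\delta)$, the universal property in $\mathbf C$ yields a unique $\eta\colon C\to D$ with $\eta\circ\theta_{\lambda,\infty}=\eta_\lambda$; equivariance of $\eta$ follows since $\eta\circ\gamma_g$ and $\delta_g\circ\eta$ both induce the cocone $(\delta_g\circ\eta_\lambda)_\lambda$ and hence coincide, and uniqueness of $\eta$ in $\mathbf C_G$ is inherited from $\mathbf C$ because the forgetful functor $\mathbf C_G\to\mathbf C$ is faithful.

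For (ii), I would apply $\mathrm F_G$ to a system with limit $(C,\gamma)$ constructed as in (i) and show that $\mathrm F_G(\theta_{\lambda,\infty})$ exhibits $\mathrm F_G(C,\gamma)=(\mathrm F(C),\mathrm F\circ\gamma)$ as the inductive limit of $\big(\mathrm F(C_\lambda),\mathrm F(\theta_{\lambda,\mu})\big)$ in $\mathbf D_G$. Since $\mathrm F$ preserves $\Lambda$-colimits, $\big(\mathrm F(C),(\mathrm F(\theta_{\lambda,\infty}))\big)$ is the limit in $\mathbf D$; by part (i) applied to $\mathbf D$ this limit carries a canonical action $\widetilde\gamma$ determined by $\widetilde\gamma_g\circ\mathrm F(\theta_{\lambda,\infty})=\mathrm F(\theta_{\lambda,\infty})\circ\mathrm F(\gamma^\lambda_g)$, and the functoriality identity $\mathrm F(\gamma_g)\circ\mathrm F(\theta_{\lambda,\infty})=\mathrm F(\theta_{\lambda,\infty}\circ\gamma^\lambda_g)=\mathrm F(\theta_{\lambda,\infty})\circ\mathrm F(\gamma^\lambda_g)$ together with uniqueness forces $\widetilde\gamma_g=\mathrm F(\gamma_g)=(\mathrm F\circ\gamma)_g$. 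Hence the inductive limit of $\mathrm F_G$ applied to the system equals $\mathrm F_G$ applied to the inductive limit.

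The argument is entirely formal, so there is no real obstacle; the only delicate point is the repeated use of the uniqueness clause in the universal property of colimits in $\mathbf C$ (and in $\mathbf D$), which is exactly what guarantees both that the maps $\gamma_g$ assemble into a genuine action by automorphisms and that the induced universal morphisms are equivariant. One should also remark at the outset that functors preserve isomorphisms, so that $\mathrm F\circ\gamma$ takes values in $\mathrm{Aut}(\mathrm F(C))$ and $\mathrm F_G$ is well defined, consistently with Definition \ref{df: C^G, RB^G}.
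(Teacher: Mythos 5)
Your proposal is correct and follows essentially the same route as the paper: forget the actions, form the colimit in $\mathbf C$, induce the $G$-action and verify the group law and equivariance of the universal morphisms via the uniqueness clause of the universal property, and for (ii) identify the induced action on $\mathrm F(C)$ with $\mathrm F\circ\gamma$ by the same uniqueness argument. Your explicit check that $\gamma_e=\mathrm{id}_C$ and your reuse of part (i) for the codomain category in (ii) are minor presentational improvements over the paper's direct verification, but the substance is identical.
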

\begin{proof}
(i) Let $((C_\lambda,\alpha_\lambda)_{\lambda\in\Lambda}, (\gamma_{\lambda,\mu})_{\lambda,\mu\in\Lambda, \lambda<\mu})$ be a direct system in $\mathbf C_G $ over $\Lambda$, where $\gamma_{\lambda,\mu}\colon (C_\lambda,\alpha_\lambda)\to (C_\mu,\alpha_\mu, )$, for $\lambda<\mu$, is a morphism in $\mathbf C_G $. Let $(C,(\gamma_{\lambda,\infty})_{\lambda\in\Lambda})$, with $\gamma_{\lambda,\infty}\colon C_\lambda\to C$, be its direct limit in the category $\mathbf C$. Then
$$(\gamma_{\mu,\infty}\circ\alpha_\mu(g))\circ \gamma_{\lambda,\mu}=\gamma_{\lambda,\infty}\circ\alpha_\lambda(g)$$
for all $\mu\in \Lambda$ with $\lambda<\mu$. Hence, by the universal property of the inductive limit $(C,(\gamma_{\lambda,\infty})_{\lambda\in\Lambda})$, there exists a unique $\mathbf C$-morphism $\alpha(g)\colon C\to C$ that satisfies $\alpha(g)\circ \gamma_{\lambda,\infty}=\gamma_{\lambda,\infty}\circ \alpha_\lambda(g)$ for all $\lambda\in \Lambda$. Note that for $g,h\in G$, one has
$$(\alpha(g)\circ\alpha(h))\circ \gamma_{\lambda,\infty}= \gamma_{\lambda,\infty}\circ\alpha_\lambda(g)\circ\alpha(h)=\gamma_{\lambda,\infty}\circ\alpha_\lambda(gh)$$
for all $\lambda\in\Lambda$. By uniqueness of the morphism $\alpha(gh)$, it follows that $\alpha(g)\circ\alpha(h)=\alpha(gh)$ for all $g, h\in G$. This implies that $\alpha(g)$ is an automorphism of $C$ and that $\alpha\colon G\to \mathrm{Aut}(C)$ is an action. Thus $(C, \alpha)$ is an object in $\mathbf C_G $.

We claim that $(C, \alpha)$ is the inductive limit of $((C_\lambda,\alpha_\lambda)_{\lambda\in\Lambda}, (\gamma_{\lambda,\mu})_{\lambda,\mu\in\Lambda, \lambda<\mu})$ in the category $\mathbf C_G $. For $\lambda\in \Lambda$, The map $\gamma_{\lambda,\infty}$ is equivariant since $\gamma_{\lambda,\infty}\circ\alpha_\lambda(g)=\alpha(g)\circ\gamma_{\lambda,\infty}$ for all $g\in G$ and $\lambda\in\Lambda$. Let $(D, \beta)$ be an object in $\mathbf C_G $ and for $\lambda\in\Lambda$, let $\rho_\lambda\colon (C_\lambda,\alpha_\lambda)\to (D, \beta)$ be an equivariant morphism. By the universal property of the inductive limit $C$, there exists a unique morphism $\rho\colon C\to D$ satisfying $\rho_\lambda=\rho\circ \gamma_{\lambda,\infty}$ for all $\lambda\in\Lambda$. We therefore have
\begin{align*}
(\beta(g)^{-1}\circ \rho\circ\alpha(g))\circ \gamma_{\lambda,\infty}&=\beta^{-1}(g)\circ \rho\circ \gamma_{\lambda,\infty}\circ \alpha_\lambda(g)\\
&=\beta^{-1}(g)\circ\rho_{\lambda,\infty}\circ\alpha_\lambda(g)\\
&=\rho_{\lambda,\infty},
\end{align*}
for all $g\in G$ and $\lambda\in\Lambda$. Hence by uniqueness of $\rho$, we conclude that
$$\beta^{-1}(g)\circ \rho\circ \alpha(g)=\rho$$
for all $g\in G$. In other words, $\rho$ is equivariant. We have shown that $(C, \alpha)$ has the universal property of the inductive limit in $\mathbf C_G $, thus proving the claim and part (i).

(ii)  Let $((C_\lambda,\alpha_\lambda)_{\lambda\in\Lambda}, (\gamma_{\lambda,\mu})_{\lambda,\mu\in\Lambda, \lambda<\mu})$ be a direct system in $\mathbf C_G $ and let $(C, \alpha)$ be its inductive limit in $\mathbf C_G $, which exists by the first part of this lemma. We claim that $(\mathrm F(C), \mathrm F(\alpha))$ is the inductive limit of
$$\left((\mathrm F(C_\lambda),\mathrm F(\alpha_\lambda))_{\lambda\in\Lambda}, (\mathrm F(\gamma_{\lambda,\mu}))_{\lambda,\mu\in\Lambda, \lambda<\mu}\right)$$
in the category $\mathbf D_G $. Let $(D, \delta)$ be an object in $\mathbf D_G $ and for $\lambda\in\Lambda$, let
$\rho_\lambda\colon (\mathrm F(C_\lambda),\mathrm F(\alpha_\lambda))\to (D, \delta)$
be an equivariant morphism satisfying $\rho_\mu=\mathrm F(\gamma_{\lambda,\mu})\circ \rho_\lambda$ for all $\mu\in\Lambda$ with $\lambda<\mu$. Since $\mathrm F$ is continuous by assumption, we have
$$\mathrm F(C)=\varinjlim \left((\mathrm F(C_\lambda))_{\lambda\in\Lambda}, (\mathrm F(\gamma_{\lambda,\mu}))_{\lambda,\mu\in\Lambda, \lambda<\mu}\right)$$
in $\mathbf D$. By the universal property of the inductive limit $\mathrm F(C)$ in $\mathbf D$, there exits a unique morphism $\rho\colon \mathrm F(C)\to D$ in the category $\mathbf D$ satisfying $\rho\circ \mathrm F(\gamma_{\lambda,\infty})=\rho_\lambda$. It follows that
$$(\delta(g)^{-1}\circ \rho\circ \mathrm F(\alpha(g)))\circ \mathrm F(\gamma_{\lambda,\infty})=\delta(g)^{-1}\circ \rho_\lambda\circ \mathrm F(\alpha_\lambda(g))=\rho_\lambda,$$
for all $g\in G$ and $\lambda\in\Lambda$. By the uniqueness of the morphism $\rho$, we conclude that $\delta(g)^{-1}\circ \rho\circ \mathrm F(\alpha)(g)=\rho$ for all $g\in G$. That is, $\rho\colon (\mathrm F(C),\mathrm F(\alpha))\to (D,\delta)$ is equivariant. This shows that $(\mathrm F(C), \mathrm F(\alpha))$ has the universal property of inductive limits in $\mathbf D_G $.
\end{proof}


\begin{theorem}\label{thm: mainclassification1}
Let $G$ be a finite group, let $\mathbf B$ be a subcategory of $\mathbf A$, and let $\mathbf C$ be a category where inductive limits of sequences exist. Let $\mathbf{B}_G$, $\mathbf{RB}_G$, and $\mathbf C_G $ be as in Definition \ref{df: C^G, RB^G}. Let $\mathrm F\colon \mathbf{B}\to \mathbf C$ be a sequentially continuous functor that classifies homomorphisms and let $\mathrm{F}_G\colon \mathbf{B}_G\to \mathbf{C}_G$ be the associated functor as in Definition \ref{df: C^G, RB^G}. Then the restriction of $\mathrm{F}_G$ to $\mathbf{RB}_G$ classifies isomorphisms. In particular, if $(A,\alpha)$ and $(B,\beta)$ are C*-dynamical systems in $\mathbf{RB}_G$, then $\alpha$ and $\beta$ are conjugate if and only if there exists an isomorphism $\rho\colon \mathrm F_G(A,\alpha)\to \mathrm F_G(B,\beta)$ in $\mathbf C_G$.
\end{theorem}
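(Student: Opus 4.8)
The plan is to obtain this as a formal consequence of three results already established: Theorem \ref{thm: mainclassification} (ii), Lemma \ref{lem: categories}, and the abstract intertwining behind Theorem \ref{thm: isomorphism}. First, Theorem \ref{thm: mainclassification} (ii) already gives that the restriction of $\mathrm F_G$ to $\mathbf{RB}_G$ classifies homomorphisms. Next, since inductive limits of sequences exist in $\mathbf C$, Lemma \ref{lem: categories} (i) shows they exist in $\mathbf C_G$; and since $\mathrm F$ is sequentially continuous, Lemma \ref{lem: categories} (ii) shows that $\mathrm F_G\colon \mathbf B_G\to \mathbf C_G$ is sequentially continuous as well. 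Its restriction to $\mathbf{RB}_G$ is still sequentially continuous: indeed, the Rokhlin property passes to inductive limits of sequences — given a sequential direct system in $\mathbf{RB}_G$ with inductive limit $(C,\gamma)$ in $\mathbf B_G$, one checks directly from Definition \ref{def: Rokhlin}, using that the canonical maps into the limit are equivariant and contractive, that $(C,\gamma)$ again has the Rokhlin property — so $\mathbf{RB}_G$ is closed under such limits inside $\mathbf B_G$, whence a sequential direct system in $\mathbf{RB}_G$ has the same limit computed in either category and $\mathrm F_G$ preserves it.

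With these ingredients assembled, I would re-run the argument used to prove Theorem \ref{thm: isomorphism}, now for the functor $\mathrm F_G|_{\mathbf{RB}_G}\colon \mathbf{RB}_G\to \mathbf C_G$. The point that needs care is that $\mathbf{RB}_G$ is \emph{not} of the form $\mathbf B'_G$ for a subcategory $\mathbf B'$ of $\mathbf A$ — the Rokhlin property is a property of the action and not of the algebra — so Theorem \ref{thm: isomorphism} cannot be invoked verbatim. However, its proof only uses that the underlying C*-algebras are separable and that $G$ is second countable (so that the sets of equivariant *-homomorphisms between objects are metrizable), together with the fact that the inner automorphisms of an object $(B,\beta)$ — declared to be conjugation by unitaries in $\widetilde{B^\beta}$, as in Definition \ref{df: B^G} — satisfy the hypotheses of \cite[Theorem 3]{Elliott}. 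Since $G$ is finite, hence a second countable compact group, and the objects of $\mathbf{RB}_G$ are separable, all of this is inherited by the full subcategory $\mathbf{RB}_G$ of $\mathbf A_G$. Thus \cite[Theorem 3]{Elliott} applies to the sequentially continuous functor $\mathrm F_G|_{\mathbf{RB}_G}$, which classifies homomorphisms, and shows that it classifies isomorphisms.

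It remains to record the ``in particular'' clause. By the definition of classifying isomorphisms, for objects $(A,\alpha)$ and $(B,\beta)$ in $\mathbf{RB}_G$, every isomorphism $\rho\colon \mathrm F_G(A,\alpha)\to \mathrm F_G(B,\beta)$ in $\mathbf C_G$ is of the form $\mathrm F_G(\phi)$ for some isomorphism $\phi\colon (A,\alpha)\to(B,\beta)$ in $\mathbf{RB}_G$; such a $\phi$ is an equivariant *-isomorphism $A\to B$, so $\alpha$ and $\beta$ are conjugate. Conversely, if $\alpha$ and $\beta$ are conjugate via an equivariant *-isomorphism $\phi$, then $\mathrm F_G(\phi)$ is an isomorphism $\mathrm F_G(A,\alpha)\to \mathrm F_G(B,\beta)$ by functoriality.

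The only real obstacle in the whole argument is the one flagged in the second paragraph: one must check that the abstract intertwining machinery of \cite{Elliott} — which in Theorem \ref{thm: isomorphism} was set up for categories of the shape $\mathbf B_G$ — is available for the full subcategory $\mathbf{RB}_G$, which forces one to revisit the hypotheses (separability, metrizability of morphism spaces, the prescribed inner automorphisms, and stability of the Rokhlin property under inductive limits) rather than to quote the theorem as a black box. Everything else is a routine combination of Theorem \ref{thm: mainclassification} (ii) and Lemma \ref{lem: categories}.
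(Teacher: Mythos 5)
Your proposal is correct and follows essentially the same route as the paper: combine Theorem \ref{thm: mainclassification}~(ii) with Lemma \ref{lem: categories} and the closure of the Rokhlin property under sequential inductive limits (which the paper delegates to \cite[Theorem 2~(v)]{SantiagoRP} rather than verifying directly), and then feed the resulting sequentially continuous classifying functor into the Elliott intertwining argument of Theorem \ref{thm: isomorphism}. Your extra remark that $\mathbf{RB}_G$ is not literally of the form $\mathbf B'_G$, so that one must check the hypotheses of \cite[Theorem 3]{Elliott} for this full subcategory rather than quote Theorem \ref{thm: isomorphism} verbatim, is a legitimate point of care that the paper glosses over, but it does not change the substance of the argument.
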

\begin{proof}
Since by Theorem \ref{thm: mainclassification} the restriction of the functor $\mathrm{F}_G$ to $\mathbf{RB}_G$ classifies homomorphisms, it is sufficient to show that the conditions of Theorem \ref{thm: isomorphism} are satisfied. First note that sequential inductive limits exists in $\mathbf{B}_G$ since $G$ is finite and they exists in $\mathbf{B}$ by assumption. Now by \cite[Theorem 2 (v)]{SantiagoRP} the same is true for $\mathbf{RB}_G$. Given that sequential inductive limits exist in the category $\mathbf C$ and $\mathrm F\colon \mathbf{B}\to \mathbf C$ is sequentially continuous, it follows from Lemma \ref{lem: categories} applied to $\Lambda=\N$ that sequential inductive limits exist in $\mathbf{C}_G$ and the functor $\mathrm F_G\colon \mathbf{B}_G\to \mathbf{C}_G$ is sequentially continuous. In particular, it follows that the restriction of $\mathrm{F}_G$ to $\mathbf{RB}_G$ is sequentially continuous. This shows that the conditions of Theorem \ref{thm: isomorphism} are met. The last statement of the theorem follows from the definition of a functor that classifies isomorphisms.
\end{proof}

The following result was proved by Izumi in \cite[Theorem 3.5]{Izumi-I} for unital C*-algebras, and more recently by Nawata in \cite[Theorem 3.5]{Nawata} for C*-algebras with \emph{almost stable rank one} (that is, C*-algebras $A$ such that $A\subseteq\overline{\mathrm{GL}(\widetilde{A})}$).

\begin{theorem}\label{thm: isoexistence}
Let $G$ be a finite group, let $A$ be separable C*-algebra and let $\alpha$ and $\beta$ be actions of $G$ on $A$ with the Rokhlin property. Assume that $\alpha_g\sim_{\mathrm{au}}\beta_g$ for all $g\in G$. Then there exists an approximately inner automorphism $\psi$ of $A$ such that $\psi\circ\alpha_g=\beta_g\circ\psi$ for all $g\in G$.
\end{theorem}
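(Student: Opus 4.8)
The plan is to deduce the statement from a two-sided Elliott intertwining argument carried out inside the category of $G$-C*-dynamical systems, feeding it with the one-sided existence statement of Proposition \ref{prop: existence} and the uniqueness statement of Proposition \ref{prop: uniqueness}. As a first step I would apply Proposition \ref{prop: existence}(ii) twice, each time to the map $\id_A\colon A\to A$. Since the hypothesis $\alpha_g\sim_{\mathrm{au}}\beta_g$ is symmetric in $\alpha$ and $\beta$, since $A$ is separable, and since both $\alpha$ and $\beta$ have the Rokhlin property, applying it with source action $\alpha$ and target action $\beta$ produces an equivariant *-homomorphism $\psi\colon (A,\alpha)\to (A,\beta)$ with $\psi\sim_{\mathrm{au}}\id_A$, and applying it with the roles of $\alpha$ and $\beta$ interchanged produces an equivariant *-homomorphism $\varphi\colon (A,\beta)\to (A,\alpha)$ with $\varphi\sim_{\mathrm{au}}\id_A$.

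Next I would check that $\varphi\circ\psi\sim_{\mathrm{G-au}}\id_{(A,\alpha)}$ and $\psi\circ\varphi\sim_{\mathrm{G-au}}\id_{(A,\beta)}$. Using separability of $A$, choose unitaries $u_n\in\U(\widetilde A)$ with $\|\psi(a)-u_n^* a u_n\|\to 0$ for every $a\in A$; then $\|\varphi(\psi(a))-\widetilde{\varphi}(u_n)^*\varphi(a)\widetilde{\varphi}(u_n)\|\to 0$, so $\varphi\circ\psi\sim_{\mathrm{au}}\varphi\sim_{\mathrm{au}}\id_A$, and symmetrically $\psi\circ\varphi\sim_{\mathrm{au}}\id_A$. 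Now $\varphi\circ\psi$ and $\id_A$ are equivariant endomorphisms of $(A,\alpha)$, whose action has the Rokhlin property, so Proposition \ref{prop: uniqueness} upgrades this to $\varphi\circ\psi\sim_{\mathrm{G-au}}\id_{(A,\alpha)}$; the same proposition applied to $(A,\beta)$ gives $\psi\circ\varphi\sim_{\mathrm{G-au}}\id_{(A,\beta)}$.

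Finally I would run Elliott's two-sided approximate intertwining argument (\cite{Elliott}) in the equivariant category. Fix increasing $\alpha$-invariant finite subsets $F_n\subseteq A$ and $\beta$-invariant finite subsets $G_n\subseteq A$ with dense unions, and construct recursively equivariant *-homomorphisms $\psi_n\colon(A,\alpha)\to(A,\beta)$, with $\psi_1=\psi$, and $\varphi_n\colon(A,\beta)\to(A,\alpha)$, with $\varphi_1=\varphi$, each obtained from its predecessor by conjugating by a unitary, so that $\|\varphi_n\psi_n(a)-a\|<2^{-n}$ for $a\in F_n$ and $\|\psi_{n+1}\varphi_n(b)-b\|<2^{-n}$ for $b\in G_n$. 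The point is that the correcting unitaries can be taken in $\widetilde{A^\beta}$ when updating $\psi_n$ and in $\widetilde{A^\alpha}$ when updating $\varphi_n$: this is exactly what the relations $\varphi_n\psi_n\sim_{\mathrm{G-au}}\id$ and $\psi_n\varphi_n\sim_{\mathrm{G-au}}\id$ provide, and these relations persist along the recursion because $\sim_{\mathrm{G-au}}$ is stable under pre- and post-composition with equivariant *-homomorphisms and, whenever needed, can be re-derived from $\sim_{\mathrm{au}}$ through Proposition \ref{prop: uniqueness}. Keeping the unitaries in the fixed-point unitizations keeps every $\psi_n$ and $\varphi_n$ equivariant, and keeping the errors summable lets Lemma \ref{lem: limit of homomorphisms} produce point-norm limits $\Psi\colon(A,\alpha)\to(A,\beta)$ and $\Phi\colon(A,\beta)\to(A,\alpha)$ with $\Phi\circ\Psi=\id_A$ and $\Psi\circ\Phi=\id_A$. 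Thus $\Psi$ is an automorphism of $A$ with $\Psi\circ\alpha_g=\beta_g\circ\Psi$ for all $g\in G$, and since each $\psi_n$ is $\psi$ conjugated by a unitary in $\widetilde{A^\beta}$ we get $\Psi\sim_{\mathrm{G-au}}\psi$, hence $\Psi\sim_{\mathrm{au}}\psi\sim_{\mathrm{au}}\id_A$, so $\Psi$ is approximately inner; then $\psi:=\Psi$ is the desired automorphism.

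I expect the main obstacle to be precisely the equivariant bookkeeping in this last step: verifying that at each stage the correcting unitary can genuinely be chosen in the unitization of the fixed-point algebra rather than merely in $\widetilde A$ — which is where the Rokhlin property enters, through Proposition \ref{prop: uniqueness} — while simultaneously keeping the approximation errors summable and the two telescoping conditions compatible, so that Elliott's argument converges to an honest *-isomorphism. The analytic content needed to find the unitaries, however, is already isolated in Propositions \ref{prop: existence} and \ref{prop: uniqueness}, so the remaining work is a careful but standard adaptation of the intertwining technique.
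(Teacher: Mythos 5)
Your argument is correct, and it rests on exactly the same two pillars as the paper's proof --- Proposition \ref{prop: existence}(ii) applied to $\id_A$ in both directions, and Proposition \ref{prop: uniqueness} to upgrade $\sim_{\mathrm{au}}$ to $\sim_{\mathrm{G-au}}$ --- but it packages them differently. The paper does not run the two-sided intertwining by hand: it introduces the tautological category $\mathbf C$ whose morphisms are $\sim_{\mathrm{au}}$-classes of *-homomorphisms, observes that the functor $\mathrm F(A)=A$, $\mathrm F(\phi)=[\phi]_{\mathrm{au}}$ trivially classifies homomorphisms and is sequentially continuous, and then invokes Theorem \ref{thm: mainclassification1} (hence ultimately \cite[Theorem 3]{Elliott}) to conclude that $\mathrm F_G$ restricted to $\mathbf{RA}_G$ classifies isomorphisms; the equivariant isomorphism $\psi$ with $\mathrm F_G(\psi)=[\id_A]_{\mathrm{au}}$ then drops out in two lines. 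Your route makes the intertwining explicit: you verify directly that $\varphi\circ\psi\sim_{\mathrm{G-au}}\id_{(A,\alpha)}$ and $\psi\circ\varphi\sim_{\mathrm{G-au}}\id_{(A,\beta)}$ (the composition $\Ad(v)\circ\varphi\circ\Ad(w)\circ\psi=\Ad(v\widetilde{\varphi}(w))\circ\varphi\circ\psi$ with $v\in\U(\widetilde{A^\alpha})$, $w\in\U(\widetilde{A^\beta})$ and $\widetilde{\varphi}(w)\in\widetilde{A^\alpha}$ shows these relations persist along the recursion), and then carry out Elliott's approximate intertwining with correcting unitaries in the fixed-point unitizations. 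What the paper's approach buys is that all the intertwining bookkeeping is delegated to the already-established abstract machinery (at the cost of having to check the hypotheses of Elliott's categorical theorem, e.g.\ metrizability of Hom-sets and the behaviour of inner automorphisms, which is done once in Theorem \ref{thm: isomorphism}); what your approach buys is a self-contained proof in which the role of the Rokhlin property --- forcing the correcting unitaries into $\widetilde{A^\alpha}$ and $\widetilde{A^\beta}$ so that equivariance survives the limit --- is completely explicit. Both are valid; yours is essentially an unrolled version of the paper's.
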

\begin{proof}
Let $\mathbf C$ be the category whose objects are separable C*-algebras and whose morphisms are given by
$$\mathrm{Hom}(A, B)=\{[\phi]_{\mathrm{au}}\colon \phi\colon A\to B \mbox{ is a *-homomorphism}\},$$
where $[\phi]_{\mathrm{au}}$ denotes the approximate unitary equivalence class of $\phi$. (It is easy to check that composition of maps is well defined in $\mathbf{C}$, and thus $\mathbf{C}$ is indeed a category.) Let $\mathrm F\colon \mathbf{A}\to \mathbf{C}$ be the functor given by $\mathrm{F}(A)=A$ for any C*-algebra $A$ in $\mathbf A$, and $\mathrm F(\phi)=[\phi]_{\mathrm{au}}$ for any *-homomorphism $\phi$ in $\mathbf A$. It is straightforward to check that sequential inductive limits exist in $\mathbf C$ and that $\mathrm F$ is sequentially continuous. Moreover, by the construction of $\mathbf C$ and $\mathrm F$ it is clear that $\mathrm F$ classifies *-homomorphisms. Therefore, by Theorem \ref{thm: mainclassification1} the restriction of the associated functor $\mathrm{F}_G$ to $\mathbf{RA}_G$ classifies isomorphisms.

Let $A$ be a separable C*-algebra (that is, a C*-algebra in $\mathbf A$), and let $\alpha$ and $\beta$ be as in the statement of the theorem. Since $\alpha_g\sim_{\mathrm{au}}\beta_g$ for all $g\in G$, we have
$$\mathrm{F}(\mathrm{id_A})\circ\mathrm F(\alpha_g)=\mathrm{F}(\mathrm{id_A}\circ\alpha_g)=\mathrm F(\beta_g\circ\mathrm{id_A})=\mathrm F(\beta_g)\circ\mathrm{F}(\mathrm{id_A}),$$
for all $g\in G$.
In other words, the map $[\mathrm{id}_A]_{\mathrm{au}}$ is equivariant. Also, note that this map is an automorphism. Therefore, it is an isomorphism in the category $\mathbf{C}_G$. Since by the previous discussion, the restriction of $\mathrm{F}_G$ to $\mathbf{RA}_G$ classifies isomorphisms, it follows that that there exists an equivariant *-automorphism
$\psi\colon (A, \alpha)\to (A,\beta)$ such that $\mathrm{F}_G(\psi)=[\mathrm{id}_A]_{\mathrm{au}}$. In particular, $\alpha$ and $\beta$ are conjugate. Using that $\mathrm{F}(\psi)=\mathrm{F}_G(\psi)=[\mathrm{id}_A]_{\mathrm{au}}=\mathrm F(\mathrm{id}_A)$ and that $\mathrm F$ classifies homomorphisms, we get that $\psi\sim_{\mathrm{au}}\mathrm{id}_A$. In other words, $\psi$ is approximately inner.
\end{proof}

\begin{remark}
In view of \cite[Remark 3.6]{Nawata}, it may be worth pointing out that one can directly modify the proof of \cite[Lemma 3.4]{Nawata} to get rid of the assumption that $A$ has almost stable rank one. Indeed, one just needs to replace the element $w$ in the proof by the unitary $w'=\sum\limits_{g\in G}(v_g-\lambda_g 1_{\widetilde{A}})f_g+1_{\widetilde{A}}$, where $\lambda_g\in\C$ is such that $v_g-\lambda_g 1_{\widetilde{A}}\in A$.
\end{remark}

\subsection{Applications}
In this section we apply Theorems \ref{thm: mainclassification}, \ref{thm: mainclassification1}, and \ref{thm: isoexistence}, and known classification results, to obtain classification of equivariant *-homomorphisms and finite group actions on certain classes of 1-dimensional NCCW-complexes and AH-algebras.

\subsubsection{1-dimensional NCCW-complexes}
Let $E$ and $F$ be finite dimensional C*-algebras, and for $x\in [0,1]$, denote by $\mathrm{ev}_x\colon \mathrm{C}([0,1], F)\to F$ the
evaluation map at the point $x$. Recall that a C*-algebra $A$ is said to be a \emph{one-dimensional non-commutative CW-complex}, abbreviated 1-dimensional
NCCW-complex, if $A$ is given by a pullback diagram of the form:
\begin{equation*}
\xymatrix{A \ar[d] \ar[rr] & & E\ar[d] \\ \mathrm C([0,1], F)\ar[rr]_-{\mathrm{ev}_0\oplus \mathrm{ev}_1} & & F\oplus F.}
\end{equation*}

\begin{theorem}\label{thm: classification-h-RP-Cutilde}
Let $G$ be a finite group. Let $(A, \alpha)$ and $(B, \beta)$ be separable C*-dynamical systems such that $A$ can be written as an inductive limit of 1-dimensional NCCW-complexes with trivial $\mathrm K_1$-groups and such that $B$ has stable rank one. Assume that $\beta$ has the Rokhlin property.
\begin{itemize}
\item[(i)] Fix strictly positive elements $s_A$ and $s_B$ of $A$ and $B$, respectively. Let $\rho\colon \Cu^\sim(A)\to \Cu^\sim(B)$ be a morphism in the category $\CCu$ such that
    $$\rho([s_A])\le [s_B] \ \mbox{ and } \ \rho\circ\Cu^\sim(\alpha_g)=\Cu^\sim(\beta_g)\circ \rho$$
    for all $g\in G$. Then there exists an equivariant *-homomorphism
    $$\phi\colon (A, \alpha)\to (B, \beta) \ \mbox{ such that } \ \Cu^\sim(\phi)=\rho.$$
\item[(ii)] If $\phi, \psi\colon (A, \alpha)\to (B, \beta)$ are equivariant *-homomorphisms, then $\Cu^\sim(\phi)=\Cu^\sim(\psi)$ if and only if $\phi\sim_{G-\mathrm{au}}\psi$.
\end{itemize}

Moreover, if $A$ is unital, or if it is simple and has trivial $\mathrm K_0$-group, or if it can be written as an inductive limit of punctured-trees algebras, then the functor $\Cu^\sim$ can be replaced by the Cuntz functor $\Cu$ in the statement of this theorem.
\end{theorem}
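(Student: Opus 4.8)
The plan is to derive both parts from the non-equivariant classification theorem of Robert \cite{Robert} for inductive limits of $1$-dimensional NCCW-complexes with trivial $\mathrm K_1$-groups, combined with the equivariant perturbation results Proposition~\ref{prop: existence} and Proposition~\ref{prop: uniqueness}; the argument runs parallel to that of Theorem~\ref{thm: mainclassification}, except that here the codomain $B$ is only assumed to have stable rank one rather than to lie in the classifiable class, so the abstract categorical statement is not directly applicable and one argues by hand. Two preliminary remarks will be used repeatedly. First, $A$ has stable rank one (being an inductive limit of $1$-dimensional NCCW-complexes with trivial $\mathrm K_1$) and $B$ has stable rank one by hypothesis, so $\Cu^\sim$ is defined and functorial on the relevant algebras. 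Second, $\Cu^\sim$ is invariant under approximate unitary equivalence: if $\phi\sim_{\mathrm{au}}\psi$ via unitaries $u_n\in\widetilde B$, then the same unitaries witness $\widetilde\phi\sim_{\mathrm{au}}\widetilde\psi$, hence $\Cu(\widetilde\phi)=\Cu(\widetilde\psi)$ and therefore $\Cu^\sim(\phi)=\Cu^\sim(\psi)$.

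For part (ii), let $\phi,\psi\colon(A,\alpha)\to(B,\beta)$ be equivariant $*$-homomorphisms. If $\phi\sim_{G-\mathrm{au}}\psi$, then, since any unitary in $\widetilde{B^\beta}$ is a unitary in $\widetilde B$, one has $\phi\sim_{\mathrm{au}}\psi$, whence $\Cu^\sim(\phi)=\Cu^\sim(\psi)$ by the second remark above. Conversely, suppose $\Cu^\sim(\phi)=\Cu^\sim(\psi)$. Forgetting the $G$-actions and viewing $\phi,\psi$ simply as $*$-homomorphisms $A\to B$, Robert's uniqueness theorem gives $\phi\sim_{\mathrm{au}}\psi$. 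Since $\beta$ has the Rokhlin property, Proposition~\ref{prop: uniqueness} upgrades this to $\phi\sim_{G-\mathrm{au}}\psi$.

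For part (i), let $\rho\colon\Cu^\sim(A)\to\Cu^\sim(B)$ be a $\CCu$-morphism with $\rho([s_A])\le[s_B]$ intertwining the induced actions. By Robert's existence theorem (which uses precisely the normalization $\rho([s_A])\le[s_B]$), there is a $*$-homomorphism $\psi\colon A\to B$ with $\Cu^\sim(\psi)=\rho$. For each $g\in G$, functoriality of $\Cu^\sim$ and the intertwining hypothesis yield
$$\Cu^\sim(\beta_g\circ\psi)=\Cu^\sim(\beta_g)\circ\rho=\rho\circ\Cu^\sim(\alpha_g)=\Cu^\sim(\psi\circ\alpha_g),$$
so Robert's uniqueness theorem gives $\beta_g\circ\psi\sim_{\mathrm{au}}\psi\circ\alpha_g$ for all $g\in G$. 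As $A$ is separable and $\beta$ has the Rokhlin property, Proposition~\ref{prop: existence}(ii) furnishes an equivariant $*$-homomorphism $\phi\colon(A,\alpha)\to(B,\beta)$ with $\phi\sim_{\mathrm{au}}\psi$; by the second remark, $\Cu^\sim(\phi)=\Cu^\sim(\psi)=\rho$, as required.

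For the final assertion, one repeats the above with $\Cu$ in place of $\Cu^\sim$, invoking the corresponding $\Cu$-versions of Robert's existence and uniqueness theorems, which are available exactly in the three listed cases ($A$ unital; $A$ simple with trivial $\mathrm K_0$; $A$ an inductive limit of punctured-trees algebras). Concretely, in each of these cases $\Cu$ and $\Cu^\sim$ carry the same classifying information for maps out of $A$, so a $\CCu$-morphism $\Cu(A)\to\Cu(B)$ with the normalization $\rho([s_A])\le[s_B]$ corresponds to a $\CCu$-morphism $\Cu^\sim(A)\to\Cu^\sim(B)$, and equality of the $\Cu$-morphisms induced by $*$-homomorphisms $A\to B$ is equivalent to equality of the induced $\Cu^\sim$-morphisms; the arguments for (i) and (ii) then go through verbatim. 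I expect this last step to be the main point requiring care: one must pin down the exact form of the $\Cu$-classification (including its normalization condition) in each of the three classes and check that the translation between $\Cu$ and $\Cu^\sim$ is compatible both with composition and with approximate unitary equivalence, so that the two applications of Robert's theorems above can be carried out for the intertwiner $\rho$ and for the auxiliary maps $\psi\circ\alpha_g$ and $\beta_g\circ\psi$.
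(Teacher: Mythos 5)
Your proposal is correct and follows essentially the same route as the paper: Robert's existence and uniqueness theorems for $\Cu^\sim$ combined with Propositions~\ref{prop: existence} and~\ref{prop: uniqueness}, and for the final assertion the equivalence of $\Cu$ and $\Cu^\sim$ on the three listed classes (the paper makes this precise via \cite[Remark 3 (ii)]{Robert}, \cite[Corollary 4 (b)]{Robert}, \cite[Corollary 6.7]{Elliott-Robert-Santiago}, \cite[Corollary 8.6]{Tikuisis}, and, for punctured-tree algebras, by substituting \cite[Theorem 1.1]{Ciuperca-Elliott-Santiago} for Robert's theorem).
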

\begin{proof}
(i) Let $\rho\colon \Cu^\sim(A)\to \Cu^\sim(B)$ be as in the statement of the theorem. By \cite[Theorem 1]{Robert}, there exists a *-homomorphism $\psi\colon A\to B$ such that $\Cu^\sim(\psi)=\rho$. Using that $\rho$ is equivariant, we get $\Cu^\sim(\beta_g\circ\psi)=\Cu^\sim(\psi\circ\alpha_g)$ for all $g\in G$. By the uniqueness part of \cite[Theorem 1]{Robert}, it follows that $\beta_g\circ\psi\sim_{\mathrm{au}}\psi\circ\alpha_g$ for all $g\in G$. By Proposition \ref{prop: existence}, there exists an equivariant *-homomorphism $\phi\colon A\to B$ such that $\phi\sim_{\mathrm{au}}\psi$. Since $\Cu^\sim$ is invariant under approximate unitary equivalence,
we conclude $\Cu^\sim(\phi)=\Cu^\sim(\psi)$, as desired.

(ii) The ``if" implication is clear. For the converse, let $\phi$ and $\psi$
be as in the statement of the theorem. By the uniqueness part of \cite[Theorem 1]{Robert}, we have $\phi\sim_{\mathrm{au}}\psi$. It now follows from Proposition \ref{prop: uniqueness} that $\phi\sim_{G-\mathrm{au}}\psi$.

It follows from \cite[Remark 3 (ii)]{Robert}, and by \cite[Corollary 4 (b)]{Robert}, \cite[Corollary 6.7]{Elliott-Robert-Santiago}, and \cite[Corollary 8.6]{Tikuisis}, respectively, that the functors $\Cu^\sim$ and $\Cu$ are equivalent when restricted to the class of C*-algebras that are inductive limits of 1-dimensional NCCW-complexes which are either unital or simple and with trivial $\mathrm K_0$-group. Hence, for these classes of C*-algebras, the theorem holds when $\Cu^\sim$ is replaced by $\Cu$. For C*-algebras that are inductive limits of punctured-trees algebras, one can use \cite[Theorem 1.1]{Ciuperca-Elliott-Santiago} instead of \cite[Theorem 1]{Robert} in the proof above to obtain the desired result. Finally, since $B$ has stable rank one, the results in
\cite{Robert} show that $\Cu(B)$ is a subsemigroup of $\Cu^\sim(B)$. In particular, for a homomorphism
$\phi\colon A\to B$, the range of the induced map $\Cu^\sim(\phi)\colon \Cu^\sim(A)\to \Cu^\sim(B)$ is
contained in $\Cu(B)\subseteq \Cu^\sim(B)$.
\end{proof}

\begin{theorem}\label{classif Rp on NCCW}
Let $G$ be a finite group, and let $(A, \alpha)$ and $(B, \beta)$ be separable dynamical systems such that $A$ and $B$ can be written as inductive limits of 1-dimensional NCCW-complexes with trivial $\mathrm K_1$-groups. Suppose that $\alpha$ and $\beta$ have the Rokhlin property.
\begin{itemize}
\item[(i)] Fix strictly positive elements $s_A$ and $s_B$ of $A$ and $B$ respectively. Then the actions $\alpha$ and $\beta$ are conjugate if and only if there exists an isomorphism $\gamma\colon \Cu^\sim(A)\to \Cu^\sim(B)$ with $\gamma([s_A])=[s_B]$, such that
    $$\gamma\circ\Cu^\sim(\alpha_g)=\Cu^\sim(\beta_g)\circ\gamma \mbox{ for all } g\in G.$$
\item[(ii)] Assume that $A=B$. Then the actions $\alpha$ and $\beta$ are conjugate by an approximately inner automorphism of $A$ if and only if $\Cu^\sim(\alpha_g)=\Cu^\sim(\beta_g)$ for all $g\in G$.
\end{itemize}

Moreover, if both $A$ and $B$ are unital, or if they are simple and have trivial $\mathrm K_0$-groups, or if they can be written as inductive limits of punctured-trees algebras, then the functor $\Cu^\sim$ can be replaced by the Cuntz functor $\Cu$.
\end{theorem}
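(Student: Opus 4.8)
The plan is to deduce (i) from Theorem \ref{thm: mainclassification1} applied to the functor $\Cu^\sim$, to deduce (ii) from Theorem \ref{thm: isoexistence} together with the uniqueness half of Robert's classification theorem, and to obtain the ``moreover'' clause by replacing $\Cu^\sim$ with $\Cu$ and invoking the known equivalence of the two functors on the relevant classes, exactly as in the proof of Theorem \ref{thm: classification-h-RP-Cutilde}.

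First I would fix the target category. Let $\mathbf B$ be the full subcategory of $\mathbf A$ whose objects are the separable C*-algebras expressible as inductive limits of $1$-dimensional NCCW-complexes with trivial $\mathrm K_1$-group; such algebras have stable rank one, so by \cite{Robert} the restriction of $\Cu^\sim$ to $\mathbf B$ takes values in $\CCu$ and preserves sequential inductive limits, and $\mathbf B$ is itself closed under sequential inductive limits. Let $\mathbf C$ be the category whose objects are pairs $(S,s)$ with $S$ an object of $\CCu$ and $s\in S$, and whose morphisms $(S,s)\to(S',s')$ are the $\CCu$-morphisms $\varphi$ with $\varphi(s)\le s'$; an isomorphism in $\mathbf C$ is then a $\CCu$-isomorphism $\varphi$ with $\varphi(s)=s'$. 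Sequential inductive limits exist in $\mathbf C$, the limit of a system $((S_n,s_n),\varphi_n)$ being $(\varinjlim S_n,\,\sup_n\varphi_{n,\infty}(s_n))$, which one checks directly from the universal property. After fixing a strictly positive element $s_A$ of each $A$ in $\mathbf B$, I would define $\mathrm F\colon\mathbf B\to\mathbf C$ by $\mathrm F(A)=(\Cu^\sim(A),[s_A])$ and $\mathrm F(\phi)=\Cu^\sim(\phi)$. This is well defined since any two strictly positive elements of a C*-algebra are Cuntz equivalent and $\phi(s_A)\precsim s_B$ for every $\phi\colon A\to B$; and it is sequentially continuous since $\Cu^\sim$ preserves sequential inductive limits and, for $A=\varinjlim(A_n,\theta_n)$, the class $[s_A]$ is the supremum of the classes $\Cu^\sim(\theta_{n,\infty})([s_{A_n}])$ (a routine consequence of $s_A$ being strictly positive, using Lemma \ref{lem: Cuntz relation}).

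With this in place, the assertion that $\mathrm F$ classifies homomorphisms is precisely a reformulation of \cite[Theorem 1]{Robert}: every $\CCu$-morphism $\rho\colon\Cu^\sim(A)\to\Cu^\sim(B)$ with $\rho([s_A])\le[s_B]$ lifts to a *-homomorphism $A\to B$, and $\Cu^\sim(\phi)=\Cu^\sim(\psi)$ if and only if $\phi\sim_{\mathrm{au}}\psi$. Part (i) is then immediate from Theorem \ref{thm: mainclassification1}: the restriction of $\mathrm F_G$ to $\mathbf{RB}_G$ classifies isomorphisms, so $\alpha$ and $\beta$ are conjugate exactly when there is an isomorphism $\mathrm F_G(A,\alpha)\to\mathrm F_G(B,\beta)$ in $\mathbf C_G$; unwinding Definition \ref{df: C^G, RB^G}, and using that each $\Cu^\sim(\alpha_g)$ fixes $[s_A]$ because $\alpha_g(s_A)$ is again strictly positive, such an isomorphism is precisely a $\CCu$-isomorphism $\gamma$ with $\gamma([s_A])=[s_B]$ satisfying $\gamma\circ\Cu^\sim(\alpha_g)=\Cu^\sim(\beta_g)\circ\gamma$ for all $g\in G$. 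For (ii), one direction is clear: if $\psi$ is approximately inner and $\psi\circ\alpha_g=\beta_g\circ\psi$, then $\Cu^\sim(\psi)=\id$ (as $\Cu^\sim$ is invariant under approximate unitary equivalence), hence $\Cu^\sim(\beta_g)=\Cu^\sim(\psi)\circ\Cu^\sim(\alpha_g)\circ\Cu^\sim(\psi)^{-1}=\Cu^\sim(\alpha_g)$. Conversely, if $\Cu^\sim(\alpha_g)=\Cu^\sim(\beta_g)$ for all $g$, the uniqueness half of \cite[Theorem 1]{Robert} gives $\alpha_g\sim_{\mathrm{au}}\beta_g$ for all $g$, and Theorem \ref{thm: isoexistence} then produces an approximately inner automorphism $\psi$ of $A$ with $\psi\circ\alpha_g=\beta_g\circ\psi$. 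The ``moreover'' clause follows by running the same argument with $\Cu$ in place of $\Cu^\sim$, using that these two functors are naturally equivalent on inductive limits of $1$-dimensional NCCW-complexes which are unital, or simple with trivial $\mathrm K_0$-group, by \cite[Remark 3 (ii)]{Robert}, \cite[Corollary 4 (b)]{Robert}, \cite[Corollary 6.7]{Elliott-Robert-Santiago} and \cite[Corollary 8.6]{Tikuisis}; for inductive limits of punctured-trees algebras one uses \cite[Theorem 1.1]{Ciuperca-Elliott-Santiago} in place of \cite[Theorem 1]{Robert} to see that $\Cu$ classifies homomorphisms, and proceeds as above.

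I do not expect a real obstacle here: granted the machinery of Section 3 and Robert's classification, the theorem is essentially formal. The points that require genuine, if routine, care are the verification that $\mathrm F$ is sequentially continuous --- in particular that the distinguished class $[s_A]$ of $\Cu^\sim(\varinjlim A_n)$ is the supremum of the images of the classes $[s_{A_n}]$ --- and the input, used implicitly, that the algebras in $\mathbf B$ have stable rank one so that $\Cu^\sim$ indeed lands in $\CCu$; the remaining content is bookkeeping with the pointed category $\mathbf C$ and its equivariant counterpart $\mathbf C_G$.
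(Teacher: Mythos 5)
Your proposal is correct and, for part (i) and the ``moreover'' clause, follows essentially the same route as the paper: apply Theorem \ref{thm: mainclassification1} to the pointed functor $A\mapsto(\Cu^\sim(A),[s_A])$, which classifies homomorphisms by \cite[Theorem 1]{Robert}, and then transfer to $\Cu$ on the special subclasses exactly as at the end of the proof of Theorem \ref{thm: classification-h-RP-Cutilde}. Your explicit construction of the pointed category $\mathbf C$ and the verification of sequential continuity (in particular that $[s_A]$ is the supremum of the images of the $[s_{A_n}]$) is bookkeeping the paper leaves implicit, and it is carried out correctly.

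The one place you genuinely diverge is part (ii). The paper disposes of it with ``(ii) clearly follows from (i)'', which, unwound, means: take $\gamma=\id$, use that the functor classifies isomorphisms to lift it to a conjugating isomorphism $\varphi$ with $\Cu^\sim(\varphi)=\id$, and then invoke the uniqueness half of \cite[Theorem 1]{Robert} to conclude that $\varphi$ is approximately inner. You instead pass through Robert's uniqueness first (to get $\alpha_g\sim_{\mathrm{au}}\beta_g$) and then apply Theorem \ref{thm: isoexistence}. Both arguments are valid and rest on the same two inputs (the Section 3 intertwining machinery plus Robert's uniqueness); your version has the mild advantage of making explicit where approximate innerness actually comes from, which the paper's one-line derivation obscures.
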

\begin{proof}
Part (ii) clearly follows from (i). Let us prove (i).
Let $\mathbf B$ denote the subcategory of the category $\mathbf A$ of C*-algebras consisting of those C*-algebras that can be written as an inductive limit of 1-dimensional NCCW-complexes with trivial $\mathrm K_1$-groups. By \cite[Theorem 1]{Robert}, the functor $(\Cu^\sim(\cdot), [s_{\,\cdot\,}])$, where $s_{\,\cdot\,}$ is a strictly positive element of the given algebra, restricted to $\mathbf B$ classifies homomorphisms. Therefore, by Theorem \ref{thm: mainclassification1}, the associated functor $(\Cu^\sim_G(\cdot), [s_{\,\cdot\,}])$ restricted to $\mathbf{RB}_G$ classifies isomorphisms, which implies (i).

The last part of the theorem follows from the same arguments used at the end of the proof of Theorem \ref{thm: classification-h-RP-Cutilde}.
\end{proof}

Let $G$ be a finite group. Recall that the action $\mu^G\colon G\to \Aut\left(\M_{|G|^\infty}\right)$ constructed in Example \ref{eg: model action} has the Rokhlin property, and that $\mu^G_g$ is approximately inner for all $g\in G$.

In the next corollary, we do not assume that either $\alpha$ or $\beta$ has the Rokhlin property.

\begin{corollary}\label{cor: tensor with mu-G}
Let $G$ be a finite group and let $(A, \alpha)$ and $(A, \beta)$ be C*-dynamical systems such that $A$ can be written as an inductive limit of 1-dimensional NCCW-complexes with trivial $\mathrm K_1$-groups. Suppose that $\Cu^\sim(\alpha_g)=\Cu^\sim(\beta_g)$ for all $g\in G$. Then $\alpha\otimes \mu^G$ and $\beta\otimes \mu^{G}$ are conjugate.

Moreover, if $A$ belongs to one of the classes of C*-algebras described in the last part of Theorem \ref{classif Rp on NCCW}, then the statement of the corollary holds for the functor $\Cu$ in place of the functor $\Cu^\sim$.
\end{corollary}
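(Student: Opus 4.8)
The plan is to apply Theorem~\ref{classif Rp on NCCW}(ii) to the algebra $A\otimes\M_{|G|^\infty}$ equipped with the two actions $\alpha\otimes\mu^G$ and $\beta\otimes\mu^G$. For this one must verify three things: that $A\otimes\M_{|G|^\infty}$ is again an inductive limit of $1$-dimensional NCCW-complexes with trivial $\mathrm K_1$-groups; that both $\alpha\otimes\mu^G$ and $\beta\otimes\mu^G$ have the Rokhlin property; and that $\Cu^\sim((\alpha\otimes\mu^G)_g)=\Cu^\sim((\beta\otimes\mu^G)_g)$ for every $g\in G$.

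The first two points are routine. Writing $\M_{|G|^\infty}=\varinjlim_n\M_{|G|^n}$ and $A=\varinjlim_k C_k$ with each $C_k$ a $1$-dimensional NCCW-complex satisfying $\mathrm K_1(C_k)=0$, a standard reindexing (diagonal) argument exhibits $A\otimes\M_{|G|^\infty}$ as an inductive limit of algebras of the form $\M_{|G|^{n_k}}(C_k)$, each of which is again a $1$-dimensional NCCW-complex with trivial $\mathrm K_1$-group (by Morita invariance of $\mathrm K$-theory and stability of the pullback description under matrix amplification). For the Rokhlin property, recall from Example~\ref{eg: model action} that $\mu^G$ has the Rokhlin property; then by Proposition~\ref{Rp properties}(i) the action $\mu^G\otimes\alpha$ on $\M_{|G|^\infty}\otimes A$ has the Rokhlin property, and conjugating by the flip isomorphism $\M_{|G|^\infty}\otimes A\cong A\otimes\M_{|G|^\infty}$ and using Proposition~\ref{Rp properties}(ii) shows that $\alpha\otimes\mu^G$ has the Rokhlin property; likewise for $\beta\otimes\mu^G$.

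The crux is the third point. Since $A$ has stable rank one (being an inductive limit of $1$-dimensional NCCW-complexes) and the hypothesis $\Cu^\sim(\alpha_g)=\Cu^\sim(\beta_g)$ says exactly that the automorphisms $\alpha_g,\beta_g\colon A\to A$ induce the same morphism on $\Cu^\sim$, the uniqueness part of \cite[Theorem 1]{Robert} gives $\alpha_g\sim_{\mathrm{au}}\beta_g$ for every $g\in G$. Tensoring the approximating unitaries with $1_{\M_{|G|^\infty}}$ (note that $u\otimes 1_{\M_{|G|^\infty}}$ is a unitary in $\widetilde{A\otimes\M_{|G|^\infty}}$ for every unitary $u\in\widetilde A$), a routine estimate on elementary tensors then yields $\alpha_g\otimes\mu^G_g\sim_{\mathrm{au}}\beta_g\otimes\mu^G_g$ for every $g\in G$; since $\Cu^\sim$ is invariant under approximate unitary equivalence, the desired equality follows, and Theorem~\ref{classif Rp on NCCW}(ii) then produces an (approximately inner) conjugacy between $\alpha\otimes\mu^G$ and $\beta\otimes\mu^G$. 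For the ``moreover'' part one argues the same way, noting that if $A$ is unital (resp. simple with $\mathrm K_0(A)=0$, resp. an inductive limit of punctured-trees algebras) then so is $A\otimes\M_{|G|^\infty}$, so that the $\Cu$-version of the classification used in the proof of Theorem~\ref{thm: classification-h-RP-Cutilde} again gives $\alpha_g\sim_{\mathrm{au}}\beta_g$, and one concludes via the last part of Theorem~\ref{classif Rp on NCCW}.

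The hard part will be the passage from $\alpha_g\sim_{\mathrm{au}}\beta_g$ to the equality of the induced $\Cu^\sim$-maps of $\alpha\otimes\mu^G$ and $\beta\otimes\mu^G$; concretely, the only genuine work there is the bookkeeping with unitaries in unitizations of minimal tensor products in the non-unital setting. (One could instead bypass Theorem~\ref{classif Rp on NCCW}(ii) and finish directly with Theorem~\ref{thm: isoexistence}, since $A\otimes\M_{|G|^\infty}$ is separable, the two actions have the Rokhlin property, and $(\alpha\otimes\mu^G)_g\sim_{\mathrm{au}}(\beta\otimes\mu^G)_g$ for all $g\in G$.)
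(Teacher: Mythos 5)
Your proposal is correct and follows essentially the same route as the paper: establish the Rokhlin property for $\alpha\otimes\mu^G$ and $\beta\otimes\mu^G$ via Proposition \ref{Rp properties}, show that the two tensor actions induce the same maps on $\Cu^\sim$, and conclude with Theorem \ref{classif Rp on NCCW} (ii). The only difference is in the middle step: the paper obtains $\Cu^\sim((\alpha\otimes\mu^G)_g)=\Cu^\sim((\beta\otimes\mu^G)_g)$ by replacing $\mu^G_g$ with $\id_{\M_{|G|^\infty}}$ (using that $\mu^G_g$ is approximately inner), whereas you first upgrade $\Cu^\sim(\alpha_g)=\Cu^\sim(\beta_g)$ to $\alpha_g\sim_{\mathrm{au}}\beta_g$ via the uniqueness part of Robert's theorem and then tensor the unitaries with $1_{\M_{|G|^\infty}}$ --- both justifications are valid, and yours arguably makes explicit a point the paper leaves implicit.
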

\begin{proof}
The actions $\alpha\otimes \mu^G$ and $\beta\otimes \mu^{G}$ have the Rokhlin property by part (i) of Lemma \ref{Rp properties}. Note that $\mu^G_g$ is approximately inner for all $g\in G$. Thus,
$$\Cu^\sim(\alpha\otimes\mu^G_g)=\Cu^\sim(\alpha\otimes \id_{\mathrm M_{|G|^\infty}})=\Cu^\sim(\beta\otimes \id_{\mathrm M_{|G|^\infty}})=\Cu^\sim(\beta\otimes\mu^G_g)$$
for all $g\in G$. It follows from Theorem \ref{classif Rp on NCCW} (ii) that $\alpha\otimes \mu^G$ and $\beta\otimes \mu^{G}$ are conjugate.
\end{proof}

\subsubsection{AH-algebras}

Recall that a C*-algebra $A$ is approximate homogeneous (shortly AH) if it can be written as an inductive limit $A=\varinjlim (A_n, \phi_{n,m})$, with
$$A_n=\oplus_{j=1}^{s(n)}P_{n,j}\M_{n,j}(\mathrm{C}(X_{n,j}))P_{n,j},$$
where $X_{n,j}$ is a finite dimensional compact metric space, and $P_{n,j}\in \M_{n,j}(\mathrm{C}(X_{n,j}))$ is a projection for all $n$ and $j$. The C*-algebra $A$ is said to have \emph{no dimension growth} if there exists an inductive limit decomposition of $A$ as an AH-algebra such that
$$\sup_n\max_j\dim X_{n,j}<\infty.$$

Let $A$ be a unital simple separable C*-algebra and let $\mathrm T(A)$ denote the metrizable compact convex set of tracial states of $A$. Denote by $\mathrm T$ the induced contravariant functor from the category of unital separable simple C*-algebras to the category of metrizable compact convex sets. It is not difficult to check that $\mathrm T$ is continuous, meaning that it sends inductive limits to projective limits. 

Let $T$ be a metrizable compact convex set and let $\mathrm{Aff}(T)$ denote the set of real-valued continuous affine functions on $T$. Let $\mathrm{Aff}$ denote the induced contravariant functor from the category of metrizable compact convex sets to the category of normed vector spaces. Denote by $\rho_A\colon \mathrm{K}_0(A)\to \mathrm{Aff}(\mathrm T(A))$ the map defined by
\begin{align}\label{eq: rho}
\rho_A([p]-[q])(\tau)=(\tau\otimes \mathrm{Tr}_n)(p)-(\tau\otimes \mathrm{Tr}_n)(q)
\end{align}
for $p, q\in \M_n(\C)$, where $\mathrm{Tr}_n$ denotes the standard trace on $\M_n(\C)$.

Let $A$ be a unital C*-algebra. Denote by $\mathrm U(A)$ the unitary group of $A$ and by $\mathrm{CU}(A)$ the closure of the normal subgroup generated by the commutators of $\mathrm U(A)$. We denote the quotient group by
$$\mathrm H(A)=\mathrm U(A)/\mathrm{CU}(A).$$
(see \cite{Thomsen} and \cite{Nielsen-Thomsen} for properties of this group).
The set $\mathrm H(A)$, endowed with the distance induced by the distance in $\mathrm U(A)$, is a complete metric space. 
We denote by $\mathrm H$ the induced functor from the category of C*-algebras to the category of complete metric groups. Also, if $A$ is a simple unital AH-algebra of no dimension growth (or more generally, a simple unital C*-algebra of tracial rank no greater than one), then there exists an injection
\begin{align}\label{eq: lambda}
\lambda_A\colon \mathrm{Aff}(\mathrm T(A))/\overline{\rho_A(\mathrm K_0(A))}\to \mathrm H(A).
\end{align}
(See \cite{Thomsen} and \cite{Huaxin}.)

For a C*-algebra $A$, denote by $\underline{\mathrm K}(A)$ the sum of all K-groups with $\mathbb{Z}/n\mathbb{Z}$ coefficients for all $n\ge 1$. Let $\Lambda$ denote the category generated by the Bockstein operations on $\underline{\mathrm K}(A)$ (see \cite{Dadarlat-Loring}). Then $\underline{\mathrm K}(A)$ becomes a $\Lambda$-module and it induces a continuous functor $\underline{\mathrm K}$ from the category of C*-algebras to the category of $\Lambda$-modules.

Let $A$ and $B$ be unital simple AH-algebras and let $\mathrm{KL}(A, B)$ denote the group defined in \cite{RordamKL}. By the Universal Coefficient Theorem and the Universal Multicoefficient Theorem (see \cite{Dadarlat-Loring}), the groups $\mathrm{KL}(A,B)$ and $\mathrm{Hom}_{\Lambda}(\underline{\mathrm K}(A),\underline{\mathrm K}(B))$ are naturally isomorphic. Let $\mathrm{KL}^{++}_e(A, B)$ be as in \cite[Definition 6.4]{Huaxin}. By the previous isomorphism, the group $\mathrm{KL}^{++}_e(A,B)$ is naturally isomorphic to
$$\{\kappa\in \mathrm{Hom}_{\Lambda}(\underline{\mathrm K}(A),\underline{\mathrm K}(B))\colon \kappa(\mathrm{K}_0(A)_+\setminus \{0\})\subseteq \mathrm{K}_0(B)_+\setminus \{0\},\, \kappa([1_A])=\kappa([1_B])\}.$$

Let us define a functor $\underline{\mathrm K}^{++}$ from the category of separable, unital, simple, finite C*-algebras to the category whose objects are 4-tuples $(M, N, E, e)$, where $M$ is a $\Lambda$-module, $N$ is a subgroup of $M$, $E$ is a subset of $N$, and $e$ is an element of $N$; and whose morphisms $\kappa\colon (M, N, E, e)\to (M', N', E', e')$ are $\Lambda$-module maps $\kappa\colon M\to M'$ such that $\kappa(N)\subseteq N'$, $\kappa(E)\subseteq E'$, and $\kappa(e)=e'$. The functor $\underline{\mathrm K}^{++}$ is defined as follows:
$$\underline{\mathrm K}^{++}(A)=(\underline{\mathrm K}(A), \mathrm K_0(A), \mathrm K_0(A)_+\setminus \{0\}, [1_A]), \quad\mbox{ and }\quad \underline{\mathrm K}^{++}(\phi)=\underline{\mathrm K}(\phi).$$
Note that if $A$ and $B$ are unital AH-algebras then $\mathrm{KL}^{++}_e(A,B)$ is isomorphic to $\mathrm{Hom}(\underline{\mathrm K}^{++}(A), \underline{\mathrm K}^{++}(B))$.

Let $\mathbf C$ denote the category whose objects are tuples
$$\left((M, N, E, e), T, H, \rho, \lambda\right),$$
where $(M, N, E, e)$ is as above, $T$ is a metrizable compact convex set, $H$ is a complete metric group, $\rho\colon N\to \mathrm{Aff}(T)$ is a group homomorphism, and $\lambda\colon \mathrm{Aff}(T)/\overline{\rho(N)}\to H$ is an injective continuous group homomorphism. The maps in $\mathbf C$ are triples
$$(\kappa, \eta, \mu)\colon \left((M, N, E, e), T, H, \rho, \lambda\right)\to \left((M', N', E', e'), T', H', \rho', \lambda'\right),$$
where $\kappa\colon (M, N, E, e)\to (M', N', E', e')$, $\eta\colon T'\to T$, and $\mu\colon H\to H'$ are maps in the corresponding categories that satisfy the compatibility conditions:
$$\rho'\circ\kappa|_N=\mathrm{Aff}(\eta)\circ \rho, \quad \mbox{and}\quad \lambda'\circ \mu=\overline{\mathrm{Aff}}(\eta)\circ\lambda,$$
where
$$\overline{\mathrm{Aff}}(\eta)\colon \mathrm{Aff}(T)/\overline{\rho(N)}\to \mathrm{Aff}(T')/\overline{\rho(N')}$$
is the map induced by $\mathrm{Aff}(\eta)$.
Using that inductive limits of sequences exist in each of the categories that form $\mathbf C$, it is not difficult to show that $\mathbf C$ is also closed under taking inductive limits of sequences. Also, it is easy to see that $\mathrm F=(\underline{\mathrm K}^{++}, \mathrm T,\mathrm H)$ is a functor from the category of unital, simple, separable, finite C*-algebras to the category $\mathbf C$. Moreover, since the functors that form $\mathrm F$ are continuous, $\mathrm F$ is also continuous.

\begin{theorem}\label{thm: AH homomrphisms}
Let $G$ be a finite group. Let $(A, \alpha)$ and $(B, \beta)$ be dynamical systems such that $A$ and $B$ are unital simple AH-algebras of no dimension growth. Assume that $\beta$ has the Rokhlin property.
\begin{itemize}
\item[(i)] Let
\begin{align*}
\kappa\colon \underline{\mathrm K}^{++}(A)\to \underline{\mathrm K}^{++}(B),\quad \eta\colon \mathrm T(B)\to \mathrm T(A),\quad \mbox{and}\quad
\mu\colon \mathrm H(A)\to\mathrm H(B),
\end{align*}
be maps in the corresponding categories that satisfy the compatibility conditions
\begin{align*}
\rho_B\circ\kappa|_{\mathrm{K_0(A)}}=\mathrm{Aff}(\eta)\circ \rho_A, \quad \mbox{and}\quad \lambda_B\circ \mu=\overline{\mathrm{Aff}}(\eta)\circ\lambda_A,
\end{align*}
where $\rho_A$, $\rho_B$, $\lambda_A$, and $\lambda_B$ are as in \eqref{eq: rho} and \eqref{eq: lambda}. Suppose that
\begin{align*}
\kappa\circ\underline{\mathrm K}(\alpha_g)= \underline{\mathrm K}(\beta_g)\circ \kappa,
\quad\eta\circ\mathrm T(\beta_g)=\mathrm T(\alpha_g)\circ \eta,\quad \mu\circ \mathrm H(\alpha_g)=\mathrm H(\beta_g)\circ \mu,
\end{align*}
for all $g\in G$. Then there exists an equivariant *-homomorphism $\phi\colon (A, \alpha)\to (B, \beta)$ such that
$$\underline{\mathrm K}^{++}(\phi)=\kappa, \quad \mathrm T(\phi)=\eta,\quad \mbox{and}\quad \mathrm H(\phi)=\mu.$$

\item[(ii)] Let $\phi, \psi\colon A\to B$ be equivariant *-homomorphisms such that
$$\underline{\mathrm K}(\phi)=\underline{\mathrm K}(\psi),\quad \mathrm T(\phi)=\mathrm T(\psi), \quad \mbox{and}\quad \mathrm H(\phi)=\mathrm H(\psi).$$
Then $\phi\sim_{G-\mathrm{au}}\psi$.
\end{itemize}
\end{theorem}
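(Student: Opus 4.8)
The plan is to obtain this theorem as an application of the abstract classification result Theorem \ref{thm: mainclassification}, exactly as Theorems \ref{thm: classification-h-RP-Cutilde} and \ref{classif Rp on NCCW} were deduced from it in the NCCW setting; the only non-formal ingredient is the \emph{non-equivariant} classification of unital *-homomorphisms between unital simple AH-algebras of no dimension growth by the invariant $\mathrm F=(\underline{\mathrm K}^{++},\mathrm T,\mathrm H)$ together with the pairings $\rho$ and $\lambda$, which is due to Lin (see \cite{Huaxin}, and \cite{Thomsen,Nielsen-Thomsen,Dadarlat-Loring,RordamKL} for the identification of the invariant). More precisely, let $\mathbf B$ be the full subcategory of $\mathbf A$ whose objects are the unital simple AH-algebras of no dimension growth. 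Then $\mathrm F$ is a (continuous) functor from $\mathbf B$ to the category $\mathbf C$ constructed above, and Lin's classification result says that $\mathrm F$ restricted to $\mathbf B$ classifies homomorphisms in the (non-equivariant) sense of the definition preceding Theorem \ref{thm: isomorphism}: every morphism $\mathrm F(A)\to\mathrm F(B)$ in $\mathbf C$ is of the form $\mathrm F(\phi)$ for some unital *-homomorphism $\phi\colon A\to B$, and two unital *-homomorphisms $A\to B$ inducing the same morphism of $\mathbf C$ are approximately unitarily equivalent. I am using here that, by the very construction of $\mathbf C$, a morphism $\mathrm F(A)\to\mathrm F(B)$ is precisely a triple $(\kappa,\eta,\mu)$ with $\kappa\colon\underline{\mathrm K}^{++}(A)\to\underline{\mathrm K}^{++}(B)$, $\eta\colon\mathrm T(B)\to\mathrm T(A)$ and $\mu\colon\mathrm H(A)\to\mathrm H(B)$, subject to the two compatibility conditions $\rho_B\circ\kappa|_{\mathrm K_0(A)}=\mathrm{Aff}(\eta)\circ\rho_A$ and $\lambda_B\circ\mu=\overline{\mathrm{Aff}}(\eta)\circ\lambda_A$.

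Granting this input, the proof of (i) runs as follows. The three equivariance hypotheses $\kappa\circ\underline{\mathrm K}(\alpha_g)=\underline{\mathrm K}(\beta_g)\circ\kappa$, $\eta\circ\mathrm T(\beta_g)=\mathrm T(\alpha_g)\circ\eta$ and $\mu\circ\mathrm H(\alpha_g)=\mathrm H(\beta_g)\circ\mu$ for all $g\in G$ say exactly that the triple $(\kappa,\eta,\mu)$ intertwines the induced $G$-actions $\mathrm F(\alpha)$ and $\mathrm F(\beta)$; that is, $(\kappa,\eta,\mu)$ is a morphism $\mathrm F_G(A,\alpha)\to\mathrm F_G(B,\beta)$ in the category $\mathbf C_G$. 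Since $A$ and $B$ lie in $\mathbf B$ and $\beta$ has the Rokhlin property, $(A,\alpha)$ is an object of $\mathbf B_G$ and $(B,\beta)$ is an object of $\mathbf{RB}_G$, so Theorem \ref{thm: mainclassification}(i)(a) (applied with this $\mathbf B$, $\mathbf C$ and $\mathrm F$) produces an equivariant *-homomorphism $\phi\colon(A,\alpha)\to(B,\beta)$ with $\mathrm F_G(\phi)=(\kappa,\eta,\mu)$; reading off the three coordinates of this equality gives $\underline{\mathrm K}^{++}(\phi)=\kappa$, $\mathrm T(\phi)=\eta$ and $\mathrm H(\phi)=\mu$.

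For (ii), since $\phi$ and $\psi$ are unital *-homomorphisms they automatically preserve $\mathrm K_0$, its positive cone and the class of the unit, so $\underline{\mathrm K}(\phi)=\underline{\mathrm K}(\psi)$ is the same as $\underline{\mathrm K}^{++}(\phi)=\underline{\mathrm K}^{++}(\psi)$; combined with $\mathrm T(\phi)=\mathrm T(\psi)$ and $\mathrm H(\phi)=\mathrm H(\psi)$ this means $\mathrm F_G(\phi)=\mathrm F_G(\psi)$, and Theorem \ref{thm: mainclassification}(i)(b) then gives $\phi\sim_{G-\mathrm{au}}\psi$.

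I expect the main obstacle to lie entirely in the non-equivariant input described in the first paragraph: one has to make sure that the classification theorem of Lin is available in exactly the form needed here, namely for \emph{unital} *-homomorphisms between unital simple AH-algebras of no dimension growth, with an existence statement realizing an \emph{arbitrary} morphism of $\mathbf C$ (not just those subject to some auxiliary condition) and with a uniqueness statement phrased up to \emph{approximate}, rather than asymptotic, unitary equivalence. This amounts to checking that the invariant used in \cite{Huaxin} (and the surrounding literature) coincides with $\mathrm F$ together with the pairings $\rho$ and $\lambda$ recorded in $\mathbf C$. Once that is settled, everything else is a purely formal consequence of Theorem \ref{thm: mainclassification}, requiring no additional argument.
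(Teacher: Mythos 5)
Your proposal is correct and follows essentially the same route as the paper: the paper likewise reduces everything to the statement that $\mathrm F=(\underline{\mathrm K}^{++},\mathrm T,\mathrm H)$ classifies (non-equivariant) *-homomorphisms between unital simple AH-algebras of no dimension growth --- obtained from Gong's tracial-rank result together with Lin's Theorems 5.11 and 6.10 in \cite{Huaxin} and the identification of $\mathrm{KL}^{++}_e(A,B)$ --- and then invokes Theorem \ref{thm: mainclassification}. The point you flag as the main obstacle (that Lin's existence and approximate-uniqueness theorems match the invariant recorded in $\mathbf C$) is exactly the non-formal input the paper relies on.
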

\begin{proof}
It is shown in \cite{Gong} that every unital simple AH-algebra of no dimension growth has tracial rank almost one. By \cite[Theorems 5.11 and 6.10]{Huaxin} applied to the algebras $A$ and $B$, and using the computations of $\mathrm{KL}^{++}_e(A,B)$ given in the paragraphs preceding the theorem, we deduce that the functor $\mathrm F$ (defined above) restricted to the category of unital simple AH-algebras of no dimension growth classifies *-homomorphisms. The theorem now follows from Theorem \ref{thm: mainclassification}.
\end{proof}

\begin{theorem}\label{thm: ismomorphismAH}
Let $G$ be a finite group and let $A$ and $B$ be unital simple AH-algebras of no dimension growth. Let $\alpha$ and $\beta$ be actions of $G$ on $A$ and $B$ with the Rokhlin property.
\begin{itemize}
\item[(i)]
The actions $\alpha$ and $\beta$ are conjugate if and only if there exist isomorphisms
\begin{align*}
\kappa\colon \underline{\mathrm K}^{++}(A)\to \underline{\mathrm K}^{++}(B), \quad\eta\colon \mathrm T(B)\to \mathrm T(A),\quad
&\mu\colon \mathrm H(A)\to\mathrm H(B),
\end{align*}
in the corresponding categories, that satisfy the compatibility conditions of the previous theorem, and such that
\begin{align*}
\kappa\circ\underline{\mathrm K}(\alpha_g)= \underline{\mathrm K}(\beta_g)\circ \kappa,\quad
\eta\circ\mathrm T(\beta_g)=\mathrm T(\alpha_g)\circ \rho,\quad \mu\circ \mathrm H(\alpha_g)=\mathrm H(\beta_g)\circ \lambda,
\end{align*}
for all $g\in G$.

\item[(ii)] Assume that $A=B$. Then the actions $\alpha$ and $\beta$ are conjugate by an approximately inner automorphism if and only if
\begin{align*}
\underline{\mathrm K}(\alpha_g)=\underline{\mathrm K}(\beta_g),\quad \mathrm T(\alpha_g)=\mathrm T(\beta_g), \quad \mbox{and}\quad \mathrm H(\alpha_g)=\mathrm H(\beta_g),
\end{align*}
for all $g\in G$.
\end{itemize}
\end{theorem}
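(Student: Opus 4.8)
The plan is to deduce both parts from the abstract classification of Section~3, feeding in Lin's classification of *-homomorphisms between unital simple AH-algebras of no dimension growth --- which is packaged into the functor $\mathrm F=(\underline{\mathrm K}^{++},\mathrm T,\mathrm H)$ constructed just before Theorem~\ref{thm: AH homomrphisms} --- in exactly the way that Robert's classification was fed into Theorem~\ref{classif Rp on NCCW}.

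For part~(i), I would let $\mathbf B$ denote the subcategory of $\mathbf A$ whose objects are unital simple AH-algebras of no dimension growth (with unital *-homomorphisms as morphisms). By the discussion preceding Theorem~\ref{thm: AH homomrphisms}, the target category $\mathbf C$ is closed under sequential inductive limits and $\mathrm F\colon\mathbf B\to\mathbf C$ is sequentially continuous; and by the proof of Theorem~\ref{thm: AH homomrphisms} --- which invokes \cite[Theorems 5.11 and 6.10]{Huaxin} together with the computation of $\mathrm{KL}^{++}_e$ --- the restriction of $\mathrm F$ to $\mathbf B$ classifies homomorphisms. Since $\mathbf B$ is itself closed under sequential inductive limits (an inductive limit of unital simple AH-algebras of no dimension growth along unital connecting maps is again one, using the characterization of this class through tracial rank and the UCT), Theorem~\ref{thm: mainclassification1} applies and shows that the restriction of $\mathrm F_G$ to $\mathbf{RB}_G$ classifies isomorphisms; in particular, two systems $(A,\alpha),(B,\beta)$ in $\mathbf{RB}_G$ are conjugate if and only if there is an isomorphism $\mathrm F_G(A,\alpha)\to\mathrm F_G(B,\beta)$ in $\mathbf C_G$. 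The remaining step is purely formal: unwinding Definition~\ref{df: C^G, RB^G}, such an isomorphism is an isomorphism $(\kappa,\eta,\mu)$ of $\mathbf C$ --- so $\kappa,\eta,\mu$ are isomorphisms in the respective categories satisfying the compatibility conditions of Theorem~\ref{thm: AH homomrphisms} --- which is in addition equivariant, i.e.\ intertwines $\mathrm F(\alpha_g)=(\underline{\mathrm K}(\alpha_g),\mathrm T(\alpha_g),\mathrm H(\alpha_g))$ with $\mathrm F(\beta_g)$ for every $g\in G$; this is precisely the list of conditions in the statement.

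For part~(ii), with $A=B$: one implication is immediate. If $\psi$ is an approximately inner automorphism of $A$ with $\psi\circ\alpha_g=\beta_g\circ\psi$, then applying the three constituent functors gives $\mathrm F(\psi)\circ\mathrm F(\alpha_g)=\mathrm F(\beta_g)\circ\mathrm F(\psi)$, and since $\psi\sim_{\mathrm{au}}\mathrm{id}_A$ and $\underline{\mathrm K},\mathrm T,\mathrm H$ are invariant under approximate unitary equivalence we get $\mathrm F(\psi)=\mathrm F(\mathrm{id}_A)=\mathrm{id}$, forcing $\underline{\mathrm K}(\alpha_g)=\underline{\mathrm K}(\beta_g)$, $\mathrm T(\alpha_g)=\mathrm T(\beta_g)$, and $\mathrm H(\alpha_g)=\mathrm H(\beta_g)$ for all $g$. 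Conversely, these equalities mean $\mathrm F(\alpha_g)=\mathrm F(\beta_g)$, so the uniqueness half of ``$\mathrm F$ classifies homomorphisms on $\mathbf B$'' yields $\alpha_g\sim_{\mathrm{au}}\beta_g$ for every $g\in G$; since $A$ is separable and $\alpha,\beta$ have the Rokhlin property, Theorem~\ref{thm: isoexistence} then produces an approximately inner automorphism $\psi$ of $A$ with $\psi\circ\alpha_g=\beta_g\circ\psi$ for all $g$, which is the desired conjugacy.

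At bottom this is an unwinding of machinery already in place, so I do not expect a genuine obstacle; the two points that need care are bookkeeping, namely confirming that $\mathbf B$ is closed under sequential inductive limits (so that Theorem~\ref{thm: mainclassification1} is literally applicable) and translating carefully between an ``isomorphism in $\mathbf C_G$'' and the concrete triple $(\kappa,\eta,\mu)$ with its compatibility and equivariance relations. All the substantive analytic input --- that $\mathrm F$ classifies *-homomorphisms on $\mathbf B$ --- is Lin's classification theorem, already quoted in the proof of Theorem~\ref{thm: AH homomrphisms} and simply reused here.
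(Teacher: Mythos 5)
Your proof is correct and follows essentially the same route as the paper: part (i) is exactly the paper's argument (the functor $\mathrm F=(\underline{\mathrm K}^{++},\mathrm T,\mathrm H)$ classifies homomorphisms between unital simple AH-algebras of no dimension growth, so Theorem~\ref{thm: mainclassification1} applies and one unwinds the resulting isomorphism in $\mathbf C_G$), and your explicit check that this class is closed under sequential inductive limits is a point the paper leaves implicit. For part (ii) you prove the converse directly via Theorem~\ref{thm: isoexistence}, whereas the paper deduces it from part (i) together with part (ii) of Theorem~\ref{thm: AH homomrphisms}; the two are interchangeable, since both extract the approximate innerness of the conjugating automorphism from the uniqueness half of Lin's classification.
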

\begin{proof}
Part (ii) clearly follows from (i) and part (ii) of Theorem~\ref{thm: AH homomrphisms}. 
Let us prove (i). As in the proof of Theorem \ref{thm: AH homomrphisms}, the functor $\mathrm F$ restricted to the category of unital simple AH-algebras of no dimension growth classifies homomorphisms. The statements of the theorem now follows from Theorem \ref{thm: mainclassification1}.
\end{proof}

\begin{corollary}
Let $G$ be a finite group and let $A$ be a unital simple AH-algebra of no dimension growth. Let $(A, \alpha)$ and $(A, \beta)$ be C*-dynamical systems. Suppose that
$$\underline{\mathrm K}^{++}(\alpha_g)=\underline{\mathrm K}^{++}(\beta_g), \quad \mathrm T(\alpha_g)=\mathrm T(\beta_g), \quad \mbox{and} \quad \mathrm H(\alpha_g)=\mathrm{H}(\beta_g),$$
for all $g\in G$. Then $\alpha\otimes \mu^G$ and $\beta\otimes \mu^{G}$ are conjugate.
\end{corollary}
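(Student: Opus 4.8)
The plan is to imitate the proof of Corollary~\ref{cor: tensor with mu-G}, using the invariant $\mathrm F=(\underline{\mathrm K}^{++},\mathrm T,\mathrm H)$ and Theorem~\ref{thm: ismomorphismAH} in place of $\Cu^\sim$ and Theorem~\ref{classif Rp on NCCW}. First I would record the structural facts. Both $\alpha\otimes\mu^G$ and $\beta\otimes\mu^G$ are actions of $G$ on $A\otimes\M_{|G|^\infty}$, and both have the Rokhlin property by part~(i) of Proposition~\ref{Rp properties}. Moreover $A\otimes\M_{|G|^\infty}$ is again a unital simple AH-algebra of no dimension growth: writing $\M_{|G|^\infty}=\varinjlim_n\M_{|G|^n}$ and tensoring a no-dimension-growth AH-decomposition of $A$ produces an AH-decomposition of $A\otimes\M_{|G|^\infty}$ over the \emph{same} base spaces (only the sizes of the matrix blocks change). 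Hence Theorem~\ref{thm: ismomorphismAH}(ii) applies with $A$ replaced by $A\otimes\M_{|G|^\infty}$, and it suffices to prove that for each $g\in G$ the automorphisms $(\alpha\otimes\mu^G)_g$ and $(\beta\otimes\mu^G)_g$ induce the same maps on $\underline{\mathrm K}^{++}$, on $\mathrm T$, and on $\mathrm H$.

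To establish this I would proceed in two steps. First, since $\mu^G_g$ is approximately inner (Example~\ref{eg: model action}), writing $\mu^G_g=\lim_n\Ad(u_n)$ pointwise with $u_n\in\U(\M_{|G|^\infty})$ gives $(\alpha\otimes\mu^G)_g=\lim_n\Ad(1_A\otimes u_n)\circ(\alpha_g\otimes\id_{\M_{|G|^\infty}})$ pointwise, so $(\alpha\otimes\mu^G)_g\sim_{\mathrm{au}}\alpha_g\otimes\id_{\M_{|G|^\infty}}$, and likewise $(\beta\otimes\mu^G)_g\sim_{\mathrm{au}}\beta_g\otimes\id_{\M_{|G|^\infty}}$. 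Because $\mathrm F$ is invariant under approximate unitary equivalence (it classifies homomorphisms on this class, by the proof of Theorem~\ref{thm: AH homomrphisms}; alternatively, each of $\underline{\mathrm K}^{++}$, $\mathrm T$, $\mathrm H$ annihilates inner automorphisms and is continuous), the problem reduces to showing $\mathrm F(\alpha_g\otimes\id_{\M_{|G|^\infty}})=\mathrm F(\beta_g\otimes\id_{\M_{|G|^\infty}})$. For the second step I would use continuity of $\underline{\mathrm K}^{++}$, $\mathrm T$, and $\mathrm H$: applying each of these functors to $A\otimes\M_{|G|^\infty}=\varinjlim_n(A\otimes\M_{|G|^n})$ and to the compatible family $(\alpha_g\otimes\id_{\M_{|G|^n}})_n$ reduces the desired equality to the corresponding equality on $\M_{|G|^n}(A)$ for each $n$, i.e. $\mathrm F(\alpha_g\otimes\id_{\M_{|G|^n}})=\mathrm F(\beta_g\otimes\id_{\M_{|G|^n}})$. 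At that finite stage one has natural identifications that intertwine $\mathrm F(\phi\otimes\id_{\M_{|G|^n}})$ with $\mathrm F(\phi)$ for any automorphism $\phi$ of $A$: Morita invariance of $\underline{\mathrm K}$ (which is compatible with the extra data of $\underline{\mathrm K}^{++}$, since automorphisms preserve the positive cone of $\mathrm K_0$ and fix the class of the unit), the affine homeomorphism $\mathrm T(\M_{|G|^n}(A))\cong\mathrm T(A)$ given by compression to a corner, and the isomorphism $\mathrm H(A)\xrightarrow{\ \sim\ }\mathrm H(\M_{|G|^n}(A))$ induced by $a\mapsto\mathrm{diag}(a,1,\dots,1)$. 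Transporting the hypothesis $\underline{\mathrm K}^{++}(\alpha_g)=\underline{\mathrm K}^{++}(\beta_g)$, $\mathrm T(\alpha_g)=\mathrm T(\beta_g)$, $\mathrm H(\alpha_g)=\mathrm H(\beta_g)$ through these identifications yields the finite-stage equalities, hence the claim, and Theorem~\ref{thm: ismomorphismAH}(ii) finishes the proof.

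The only ingredient that is not purely formal is the behaviour of $\mathrm H$ under matrix amplification, since $\mathrm H$ (unlike $\underline{\mathrm K}$) is not Morita invariant. The main obstacle is therefore to justify that the unital corner inclusion $a\mapsto\mathrm{diag}(a,1,\dots,1)$ induces an isomorphism $\mathrm H(A)\to\mathrm H(\M_{|G|^n}(A))$. This follows from the description of $\mathrm H$ for C*-algebras of stable rank one in terms of $\mathrm K_1$ and $\mathrm{Aff}(\mathrm T(\cdot))/\overline{\rho(\mathrm K_0(\cdot))}$ (via the de~la~Harpe--Skandalis determinant) worked out in \cite{Thomsen} and \cite{Nielsen-Thomsen}, together with the fact that $\mathrm K_*$, $\mathrm T$, and the pairing $\rho$ are unchanged by matrix amplification; since a unital simple AH-algebra of no dimension growth has stable rank one, this description is available for $A$. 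Everything else---the AH-no-dimension-growth property and simplicity of $A\otimes\M_{|G|^\infty}$, the Rokhlin property of the tensor actions, the approximate-unitary-equivalence invariance of $\mathrm F$, and the continuity of $\mathrm F$---is routine or already recorded in the paper.
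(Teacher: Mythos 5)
Your proposal is correct and takes essentially the same route as the paper, whose proof is a one-line reference to the proof of Corollary \ref{cor: tensor with mu-G}: establish the Rokhlin property of the tensor actions via Proposition \ref{Rp properties}, use approximate innerness of $\mu^G_g$ to identify the invariants of $(\alpha\otimes\mu^G)_g$ and $(\beta\otimes\mu^G)_g$, and conclude with Theorem \ref{thm: ismomorphismAH}(ii). The extra care you take with the behaviour of $\mathrm H$ under matrix amplification (via the stable-rank-one description from \cite{Thomsen}) is a detail the paper leaves implicit, and is a worthwhile addition rather than a divergence.
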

\begin{proof}
The proof of this corollary follows line by line the proof of Corollary \ref{cor: tensor with mu-G}, using the functor $\mathrm F$ instead of the functor $\Cu^\sim$ and Theorem \ref{thm: ismomorphismAH} instead of Theorem \ref{classif Rp on NCCW}. 
\end{proof}

\section{Cuntz semigroup and K-theoretical constraints}
In this section, a Cuntz semigroup obstruction is obtained for a C*-algebra to admit an action with the Rokhlin property. Also, the Cuntz semigroup of the fixed-point C*-algebra and the crossed product C*-algebra associated to an action of a finite group with the Rokhlin property are computed in terms of the Cuntz semigroup of the given algebra. As a corollary, similar results are obtained for the Murray-von Neumann semigroup and the K-groups.

Let $G$ be a group and let $(S,\gamma)$ be an object in the category $\CCu_G$, this is, $S$ is a semigroup in the category $\CCu$ and
$\gamma\colon G\to \Aut(S)$ is an action of $G$ on $S$. Let $S^\gamma$ and $S^\gamma_\N$ be the subsemigroups of $S$ defined in Definition \ref{df: S gamma}.
It was shown in Lemma \ref{lem: closure} that $S^\gamma$ belongs to the category $\CCu$ and that $S^\gamma_\N$ is closed under suprema
of increasing sequences. We do not know in general whether $S^\gamma_\N$ is an object in $\CCu$. However, if $\alpha\colon G\to\Aut(A)$ is
an action of a finite group $G$ on a C*-algebra $A$ with the Rokhlin property, it will follow by the next theorem that
$\Cu(A)_\N^{\Cu(\alpha)}$ coincides with $\Cu(A)^{\Cu(\alpha)}$, and with the Cuntz semigroup of $A^\alpha$, so in
particular belongs to $\CCu$.

For use in the proof of the next theorem, if $\phi\colon A\to B$ is a *-homomorphism between C*-algebras $A$ and $B$, we denote by $\phi^s\colon A\otimes\K \to B\otimes \K$ the stabilized *-homomorphism $\phi^s=\phi\otimes\id_\K$.

\begin{theorem}\label{thm: Rokhlin Constraint}
Let $A$ be a C*-algebra and let $\alpha$ be an action of a finite group $G$ on $A$ with the Rokhlin property. Let $i\colon A^\alpha\to A$
be the inclusion map. Then:
\begin{itemize}
\item[(i)] The map $\Cu(\widetilde i)\colon\Cu(\widetilde{A^\alpha})\to \Cu(\widetilde A)$ is an order embedding;

\item[(ii)] The map $\Cu(i)\colon\Cu(A^\alpha)\to \Cu(A)$ is an order embedding and
\begin{align*}
\begin{aligned}
\mathrm{Im}(\Cu(i))&=\overline{\mathrm{Im}\left(\sum\limits_{g\in G}\Cu(\alpha_g)\right)}=\mathrm{\Cu(A)}_\N^{\Cu(\alpha)}
=\mathrm{\Cu(A)}^{\Cu(\alpha)};
\end{aligned}
\end{align*}
\end{itemize}
\end{theorem}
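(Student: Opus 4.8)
I would extract everything from the Rokhlin property of the stabilised action $\bar\alpha:=\alpha\otimes\id_\K$ on $A\otimes\K$, which holds by Proposition~\ref{Rp properties}~(i). Here $(A\otimes\K)^{\bar\alpha}=A^\alpha\otimes\K$, and applying the conditional expectation $x\mapsto\tfrac1{|G|}\sum_{g}\bar\alpha_g(x)$ entrywise identifies $(A^\alpha\otimes\K)^\infty$ with the fixed-point algebra $\big((A\otimes\K)^\infty\big)^{\bar\alpha}$ inside $(A\otimes\K)^\infty$. By Lemma~\ref{lem: Rokhlin equivalence}~(iii), for each separable $C\subseteq A\otimes\K$ one obtains mutually orthogonal positive contractions $r_g\in(A\otimes\K)^\infty\cap C'$ with $\bar\alpha_g(r_h)=r_{gh}$ and $(\sum_{g}r_g)x=x$ on $C$; since $r_g=\bar\alpha_g(r_e)$, each $r_g$ commutes with $\bar\alpha_g(x)$ whenever $r_e$ commutes with $x$. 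Such ``Rokhlin towers'' are the only device I would use: they turn non-equivariant data inside $A\otimes\K$ into equivariant data inside $(A^\alpha\otimes\K)^\infty$.

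\textbf{The order-embedding in (ii).} Since $\Cu(i)$ is automatically order preserving, it suffices to show it reflects order. Given $a,b\in(A^\alpha\otimes\K)_+$ with $i^s(a)\precsim i^s(b)$ and $\varepsilon>0$, I would pick $d\in A\otimes\K$ with $\|d^*bd-a\|<\varepsilon/|G|$ and a Rokhlin tower $(r_g)$ for $C:=C^*(a,b,d)$, and set $e:=\sum_{g}r_g^{1/2}\bar\alpha_g(d)$. Then $e$ is $\bar\alpha$-fixed, as $\bar\alpha_h(e)=\sum_g r_{hg}^{1/2}\bar\alpha_{hg}(d)=e$, so $e\in(A^\alpha\otimes\K)^\infty$; and using $r_g^{1/2}r_h^{1/2}=0$ for $g\ne h$, that $r_g^{1/2}$ commutes with $b$, that $b$ is $\bar\alpha$-fixed, and that $r_g$ commutes with $\bar\alpha_g(d)$, one computes $e^*be=\sum_{g}r_g\,\bar\alpha_g(d^*bd)$ and hence $\|e^*be-a\|<\varepsilon$ because $(\sum_g r_g)a=a$. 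Lifting $e$ to a sequence $(e_n)$ in $A^\alpha\otimes\K$, Lemma~\ref{lem: Cuntz relation} gives $[(a-\varepsilon)_+]\le[e_n^*be_n]\le[b]$ in $\Cu(A^\alpha)$ for large $n$; letting $\varepsilon\to0$ yields $[a]\le[b]$. For part~(i) I would then pass to unitizations: the inclusions $A^\alpha\subseteq\widetilde{A^\alpha}$ and $A\subseteq\widetilde A$ are ideal inclusions with common quotient $\C$, the map $\Cu(\widetilde i)$ restricts to $\Cu(i)$ on the order-embedded subsemigroup $\Cu(A^\alpha)\subseteq\Cu(\widetilde{A^\alpha})$ and induces the identity on $\Cu(\C)\cong\overline{\Z_+}$, and a routine argument for Cuntz semigroups of extensions by $\C$ — lift a Cuntz-witness for $\widetilde i^s(a)\precsim\widetilde i^s(b)$ through the common quotient (already realizable inside $\widetilde{A^\alpha}\otimes\K$) and dispose of the remaining ideal part via the embedding just proved — upgrades this to an order embedding of $\Cu(\widetilde i)$.

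\textbf{The image.} Write $\gamma_g=\Cu(\alpha_g)$ and $\Sigma=\sum_g\gamma_g$. First, $\mathrm{Im}(\Cu(i))$ is closed under suprema of increasing sequences, since $\Cu(i)$ is an order embedding and a $\Cu$-morphism; and $\mathrm{Im}(\Cu(i))\subseteq\Cu(A)^\gamma_\N\subseteq\Cu(A)^\gamma$ is immediate because $(c-1/n)_+$ and $(c-(1-t))_+$ stay in $A^\alpha\otimes\K$ by functional calculus. The key step is $\Cu(A)^\gamma\subseteq\mathrm{Im}(\Cu(i))$: for a $\gamma$-fixed class $[a]$ I would absorb into $C$ the (countably many) Cuntz-witnesses implementing $\bar\alpha_g(a)\sim a$ up to $1/k$, build a Rokhlin tower $(r_g)$ for this $C$, and check that the $\bar\alpha$-fixed element $c:=\sum_g r_g^{1/2}\bar\alpha_g(a)r_g^{1/2}\in(A^\alpha\otimes\K)^\infty$ satisfies $[c]=\sum_g[\bar\alpha_g(r_e^{1/2}ar_e^{1/2})]=[a]$ in $\Cu\big((A\otimes\K)^\infty\big)$, the last equality using that the witnesses commute with $r_e$. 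Thus $a\sim c$ in the sequence algebra; lifting $c$ and the witnesses then shows that for each $n$ there is $c'\in A^\alpha\otimes\K$ with $[(a-1/n)_+]\ll[c']\le[a]$ in $\Cu(A)$, so Lemma~\ref{lem: sup} (with $T=\mathrm{Im}(\Cu(i))$) produces an increasing sequence in $\mathrm{Im}(\Cu(i))$ with supremum $[a]$, whence $[a]\in\mathrm{Im}(\Cu(i))$ by closedness. Finally $\mathrm{Im}(\Sigma)$ consists of $\gamma$-fixed classes — applying $\gamma_h$ merely permutes the summands — so $\overline{\mathrm{Im}(\Sigma)}\subseteq\mathrm{Im}(\Cu(i))$, and the identity $[c]=\sum_g[\bar\alpha_g(r_e^{1/2}cr_e^{1/2})]$ for $c\in A^\alpha\otimes\K$ in the sequence algebra, via the same lifting, puts $\mathrm{Im}(\Cu(i))$ inside $\overline{\mathrm{Im}(\Sigma)}$, closing the cycle of equalities.

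\textbf{Expected main obstacle.} The delicate inclusion is $\Cu(A)^\gamma\subseteq\mathrm{Im}(\Cu(i))$. The naive average $\sum_g r_g^{1/2}\bar\alpha_g(a)r_g^{1/2}$ only ``cuts $a$ along the tower'', so each summand $[\,r_g^{1/2}\bar\alpha_g(a)r_g^{1/2}\,]$ a priori carries merely a sub-class of $[\bar\alpha_g(a)]$; recovering the full class $[a]$ forces the Rokhlin tower to commute not only with $a$ but with all the Cuntz-witnesses for $\bar\alpha_g(a)\sim a$, and it is precisely there that the full strength of the Rokhlin property — central sequences commuting with arbitrary separable subalgebras, as in Lemma~\ref{lem: Rokhlin equivalence}~(iii) — is genuinely used. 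Handling the scalar parts in part~(i), i.e.\ the extension-by-$\C$ step, is the other point that needs care.
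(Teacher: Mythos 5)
Your argument for the order embedding of $\Cu(i)$ and your computation of the image are essentially sound, and the image computation actually takes a genuinely different route from the paper's: rather than working with the sets $\Cu(A)^{\Cu(\alpha)}$ and $\Cu(A)^{\Cu(\alpha)}_\N$ of classes admitting paths (resp.\ rapidly increasing sequences) of invariant classes and invoking the interpolation Lemma~\ref{lem: interpolation}, you show directly that \emph{every} $\Cu(\alpha)$-fixed class lies in $\mathrm{Im}(\Cu(i))$, by absorbing the Cuntz witnesses for $\bar\alpha_g(a)\sim a$ into the relative commutant of the Rokhlin tower and checking $\bigl[\sum_g r_g\bar\alpha_g(a)\bigr]=\sum_g[r_ga]=[a]$. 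Provided you take the separable algebra $C$ to be $\bar\alpha$-invariant (so that every $r_g$, not just $r_e$, commutes with the witnesses), this works, it subsumes both inclusions $\Cu(A)^{\Cu(\alpha)}\subseteq\mathrm{Im}(\Cu(i))$ and $\Cu(A)^{\Cu(\alpha)}_\N\subseteq\mathrm{Im}(\Cu(i))$ at once, and it is arguably cleaner than the paper's interpolation argument. The remaining inclusions and the closedness of the image under suprema are handled exactly as in the paper.

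The genuine gap is part (i). You dismiss it as ``a routine argument for Cuntz semigroups of extensions by $\C$,'' but no such routine argument exists, and this is precisely where the paper invests its most delicate computation. Two concrete obstructions: first, the order on $\Cu(\widetilde A)$ is not determined by the orders on $\Cu(A)$ and $\Cu(\C)$ together with the induced maps, so knowing that $\Cu(i)$ and the identity on $\Cu(\C)$ are order embeddings does not formally yield that $\Cu(\widetilde i)$ is one; second, you cannot simply run your part-(ii) argument inside $\widetilde A$, because the extension of $\alpha$ to $\widetilde A$ does \emph{not} have the Rokhlin property when $A$ is nonunital (the Rokhlin contractions live in $A$ and cannot sum to anything near $1_{\widetilde A}$), and the ``ideal parts'' $a-(j^s\circ\pi^s)(a)$, $b-(j^s\circ\pi^s)(b)$ of $a,b\in(\widetilde{A^\alpha}\otimes\K)_+$ need not be positive, so the embedding proved in (ii) cannot be applied to them. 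What the paper actually does is decompose the witness $d$ with $(a-\varepsilon)_+=dbd^*$ as $d=d_1+d_2$ with $d_1$ scalar (hence automatically fixed) and $d_2\in A\otimes\K$, set $f=\sum_g r_g\alpha_g(d_2)+d_1$, and verify $fbf^*=(a-\varepsilon)_+$ by a computation in which all the cross terms $d_1b\alpha_g(d_2^*)$, $\alpha_g(d_2)b\alpha_g(d_2^*)$, and the discrepancy $a_2-d_1b_2d_1^*$ lie in the ideal $A\otimes\K$ and are put into the finite set on which the Rokhlin tower commutes and acts as a unit. Your sketch contains the germ of this (keep the scalar part, average the ideal part), but the verification is the heart of the proof and is not supplied; note also that the paper's logical order is the reverse of yours --- it proves (i) first and obtains the embedding in (ii) by restriction to the hereditary subsemigroup $\Cu(A)\subseteq\Cu(\widetilde A)$, which is the easy direction, whereas passing from the ideal to the unitization is the hard one.
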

\begin{proof}
In the proof of this theorem, we will denote the action induced by $\alpha$ on $\widetilde{A}\otimes \K$ again
by $\alpha$.

(i) Let $a,b\in \widetilde{A^\alpha}\otimes \K$ satisfy $a\precsim b$ in $\widetilde A\otimes \K$. We want to
show that $a\precsim b$ in $\widetilde{A^\alpha}\otimes \K$. Let $\varepsilon>0$. By Lemma \ref{lem: Cuntz relation}, there exists $d\in \widetilde A\otimes \K$ such that $(a-\varepsilon)_+=dbd^*$. Apply $\alpha_g$ to this equation to get $(a-\varepsilon)_+=\alpha_g(d)b\alpha_g(d^*)$ for all $g\in G$.

Let $\pi\colon \widetilde A\to \C$ be the quotient map and let $j\colon \C\to \widetilde A$ be the inclusion $j(\lambda)=\lambda 1_{\widetilde A}$ for all $\lambda\in \C$. It is clear that $\pi\circ j=\id_{\C}$. Set
\begin{align*}
& a_1=(j^s\circ \pi^s)((a-\varepsilon)_+)\in \C 1_{\widetilde A}\otimes \K, \qquad a_2=(a-\varepsilon)_+-a_1\in A^\alpha\otimes \K,\\
& b_1=(j^s\circ \pi^s)(b)\in \C 1_{\widetilde A}\otimes \K, \qquad \qquad\quad \, b_2=b-b_1\in A^\alpha\otimes \K,\\
& d_1=(j^s\circ \pi^s)(d)\in\C 1_{\widetilde A}\otimes \K, \qquad \qquad\quad  d_2=d-d_1\in A\otimes \K.
\end{align*}
Then $a_1=d_1b_1d_1^*$. Set
$$F=\{\alpha_g(d_2)b\alpha_g(d_2)\colon g\in G\}\cup \{d_1b\alpha_g(d_2^*)\colon g\in G\}\cup \{a_2-d_1b_2d_1^*\}\subseteq
\widetilde{A}\otimes\K.$$
Use Lemma \ref{lem: Rokhlin equivalence} (ii) to choose orthogonal positive contractions $(r_g)_{g\in G}$ in
$(A\otimes \K)^\infty\cap F'\subseteq (\widetilde A\otimes \K)^\infty\cap F'$ such that $\alpha_g(r_g)=r_{gh}$ for all $g,h\in G$, and $(\sum\limits_{g\in G}r_g)x=x$ for all $x\in F$. Set
$$f=\sum\limits_{g\in G}r_g\alpha_g(d_2)+d_1\in (\widetilde A\otimes \K)^\infty.$$

In the following computation, we use in the first step the identities $r_gx=xr_g$ for all $g\in G$ and $x\in F$,
$r_gr_h=0$ for all $g\neq h$, and $(r_g^2-r_g)x=x$ for all $g\in G$ and $x\in F$;
in the second step the definition of $d_2$;
in the fourth step that $d_1\in (\widetilde A\otimes \K)^\alpha$ and the identity $(a-\varepsilon)_+=\alpha_g(d)b\alpha_g(d^*)$ for all $g\in G$;
in the fifth step the identity $a_1=d_1b_1d_1^*$;
and in the last step the identity $(\sum\limits_{g\in G}r_g)x=x$ for all $x\in F$:
\begin{align*}
&fbf^*=\left(\sum\limits_{g,h\in G}r_g\alpha_g(d_2)b\alpha_h(d_2^*)r_h+\sum\limits_{g\in G}r_g\alpha_g(d_2)bd_1^*+\sum\limits_{g\in G}d_1b\alpha_g(d_2^*)r_g\right)+d_1bd_1^*\\
&=\left(\sum\limits_{g\in G}r_g\alpha_g(d_2)b\alpha_g(d_2^*)+\sum\limits_{g\in G}r_g\alpha_g(d_2)bd_1^*+\sum\limits_{g\in G}r_gd_1b\alpha_g(d_2^*)\right)+d_1bd_1^*\\
&=\left(\sum\limits_{g\in G}r_g\left(\alpha_g(d-d_1)b\alpha_g(d^*-d_1^*)+\alpha_g(d_2)bd_1^*+d_1b\alpha_g(d_2^*)\right)\right)+d_1bd_1^*\\
&=\left(\sum\limits_{g\in G}r_g\left(\alpha_g(d)b\alpha_g(d^*)-\alpha_g(d-d_2)bd_1^*-d_1b\alpha_g(d^*-d_2^*)+d_1bd_1^*\right)\right)+d_1bd_1^*\\
&=\left(\sum\limits_{g\in G}r_g\left((a-\varepsilon)_+-d_1bd_1^*-d_1bd_1^*+d_1bd_1^*\right)\right)+d_1bd_1^*\\
&=\left(\sum\limits_{g\in G}r_g\left((a-\varepsilon)_+-d_1bd_1^*\right)\right)+d_1bd_1^*\\
&=\left(\sum\limits_{g\in G}r_g\left(a_1+a_2-d_1b_1d_1^*-d_1b_2d_1^*\right)\right)+d_1bd_1^*\\
&=\left(\sum\limits_{g\in G}r_g\left(a_2-d_1b_2d_1^*\right)\right)+d_1bd_1^*\\
&=a_2+d_1b_1d_1^*\\
&=(a-\varepsilon)_+.
\end{align*}
Shortly, $(a-\varepsilon)_+=fbf^*$ in $(\widetilde A\otimes \K)^\infty$. Since
$$f=\sum\limits_{g\in G}r_g\alpha_g(d_2)+d_1=\sum\limits_{g\in G}\alpha_g(r_ed_2)+d_1,$$
it follows that $\alpha_g(f)=f$ for all $g\in G$. This implies that $f$ is the image of a sequence $(f_n)_{n\in\N}$ in $\ell^\infty(\N,\widetilde{A^\alpha}\otimes \K)$, which satisfies
$$\lim_{n\to\infty} f_nbf_n^*=(a-\varepsilon)_+.$$
Thus, $(a-\varepsilon)_+\precsim b$ in $\widetilde{A^\alpha}\otimes \K$. Since $\varepsilon>0$ is arbitrary, we conclude that $[a]\le [b]$ in $\Cu(\widetilde{A^\alpha})$, as desired.

(ii) Since $A$ is an ideal in $\widetilde A$, the semigroup $\Cu(A)$ can be identified with the subsemigroup of $\Cu(\widetilde A)$ given by
$$\{[a]\in \Cu(\widetilde A)\colon a\in (A\otimes \K)_+\}.$$
Using this identification, it is clear that the restriction of $\Cu(\widetilde i)$ to $\Cu(A)$ is $\Cu(i)$. Therefore, it follows from the first part of the theorem that $\Cu(i)$ is an order embedding.

Let us now proceed to prove the equalities stated in the theorem. It is sufficient to show that
\begin{align}\label{eq: inclusions}
\mathrm{Im}(\Cu(i))\subseteq\overline{\mathrm{Im}\left(\sum\limits_{g\in G}\Cu(\alpha_g)\right)}\subseteq \Cu(A)^{\Cu(\alpha)}\subseteq \Cu(A)^{\Cu(\alpha)}_\N\subseteq \mathrm{Im}(\Cu(i)).
\end{align}
The third inclusion is immediate and true in full generality. The second inclusion follows using that for $[a]\in \Cu(A)$, the element $\sum\limits_{g\in G}\Cu(\alpha_g)([a])$ is $\Cu(\alpha)$-invariant, that $\sum\limits_{g\in G}\Cu(\alpha_g)([a])$ is the supremum of the path
$$t\mapsto \sum\limits_{g\in G}\Cu(\alpha_g)([(a+t-1)_+]),$$
and that $\Cu(A)^{\Cu(\alpha)}=\overline{\Cu(A)^{\Cu(\alpha)}}$ by part (ii) of Lemma \ref{lem: closure}.

We proceed to show the first inclusion. Fix a positive element $a\in A^\alpha\otimes \K$ and let $\varepsilon>0$. Using the Rokhlin property for $\alpha\otimes \id_\K$ with $F=\{a\}$, choose orthogonal positive contractions $(r_g)_{g\in G}\subseteq A\otimes \K$ such that
\begin{align}\label{eq: inequalities}
\left\|a-\sum\limits_{g\in G}r_gar_g\right\|<\varepsilon \ \ \mbox{ and } \ \ \left\|\alpha_g(r_ear_e)-r_gar_g\right\|<\varepsilon,
\end{align}
for all $g\in G$. Using the first inequality above and Lemma \ref{lem: Cuntz relation}, we obtain
\begin{align*}
\left[\left(a-4\varepsilon\right)_+\right]\le \left[\left(\sum\limits_{g\in G}r_gar_g-3\varepsilon\right)_+\right]\le
\left[ \left(\sum\limits_{g\in G}r_gar_g-\varepsilon\right)_+\right]\le [a].
\end{align*}
Furthermore, using the second inequality in \eqref{eq: inequalities} and again using
Lemma \ref{lem: Cuntz relation}, we deduce that
$$\left[\left(r_gar_g-3\varepsilon\right)_+\right]\le \left[\left(\alpha_g(r_ear_e)-2\varepsilon\right)_+\right]\le \left[\left(r_gar_g-\varepsilon\right)_+\right].$$
Take the sum of the previous inequalities, add them over $g\in G$, and use that $\Cu(\alpha_g)[(r_ear_e-2\varepsilon)_+]=[(\alpha_g(r_ear_e)-2\varepsilon)_+]$,
to conclude that
$$\left[\left(a-4\varepsilon\right)_+\right]\ll \sum\limits_{g\in
G}\Cu(\alpha_g)\left[\left(r_ear_e-2\varepsilon\right)_+\right]\le [a].$$

We have shown that for every $\varepsilon>0$, there is an element $x$ in $\mathrm{Im}\left(\sum\limits_{g\in G}\Cu(\alpha_g)\right)$ such that
$$[(a-\varepsilon)_+]\ll x\le [a].$$
By Lemma \ref{lem: sup} applied to $[a]=\sup\limits_{\varepsilon>0} [(a-\varepsilon)_+]$ and to the set $S=\mathrm{Im}\left(\sum\limits_{g\in G}\Cu(\alpha_g)\right)$, it follows that $[a]$ is the supremum of an increasing sequence in $\mathrm{Im}\left(\sum\limits_{g\in G}\Cu(\alpha_g)\right)$, showing that the first inclusion in (\ref{eq: inclusions}) holds.

In order to complete the proof, let us show that the fourth inclusion in \eqref{eq: inclusions} is also true. Fix $x\in \Cu(A)^{\Cu(\alpha)}_\N$. Choose a rapidly increasing sequence $(x_n)_{n\in\N}$ in $\Cu(A)$ such that $\Cu(\alpha_g)(x_n)=x_n$ for all $n\in \N$ and all for all $g\in G$. Fix $m\in \N$ and consider the elements $x_n$ with $n\geq m$. Note that $x_m\ll x_{m+1}\ll \cdots \ll x$. By Lemma \ref{lem: interpolation}, there is a positive element $a\in A\otimes\K$ such that
$$x_m\ll [(a-3\varepsilon)_+]\ll x_{m+1}\ll (a-2\varepsilon)_+\ll x_{m+2}\ll (a-\varepsilon)_+\ll x= [a].$$
Note that this implies that
$$[\alpha_g(a)]=\Cu(\alpha_g)[a]=\Cu(\alpha_g)(x)=x=[a]\le [a]$$
and
$$[(a-2\varepsilon)_+]\le x_{m+2}=\Cu(\alpha_g)(x_{m+2})\le\Cu(\alpha_g)[(a-\varepsilon)_+]=[\alpha_g((a-\varepsilon)_+)]$$
for every $g\in G$. By the definition of Cuntz subequivalence, there are elements $f_g, h_g\in A\otimes \K$ for $g\in G$ such that
$$\|\alpha_g(a)-f_gaf_g^*\|<\frac{\varepsilon}{|G|}$$
and
$$\|(a-2\varepsilon)_+-h_g\alpha_g((a-\varepsilon)_+)h_g^*\|<\frac{\varepsilon}{|G|}.$$
Using the Rokhlin property for $\alpha$, with
$$F=\{\alpha_g(a), \alpha_g((a-\varepsilon)_+), f_g, h_g\colon g\in G\}\cup \{(a-2\varepsilon)_+\},$$
choose positive orthogonal contractions $(r_g)_{g\in G}\subseteq (A\otimes \K)^\infty\cap F'$ as in (ii) of Lemma \ref{lem: Rokhlin equivalence}. Set $f=\sum\limits_{g\in G}f_gr_g$ and $h=\sum\limits_{g\in G}h_gr_g$. Then
\begin{align*}
&\left\|\sum\limits_{g\in G} r_g\alpha_g(a)r_g-faf^*\right\|=\left\|\sum\limits_{g\in G}r_g(\alpha_g(a)-f_gaf_g^*)\right\|<|G|\cdot \frac{\varepsilon}{|G|}=\varepsilon,
\end{align*}
in $(A\otimes \K)^\infty$.
Similarly,
\begin{align*}
\left\|(a-2\varepsilon)_+-h\left(\sum\limits_{g\in G}\alpha_g((a-\varepsilon)_+)\right)h^*\right\|<\varepsilon.
\end{align*}
Using that $r_g$ commutes with $\alpha_g(a)$ and that $r_g^2\alpha_g(a)=r_g\alpha_g(a)$ for all $g\in G$, one easily shows that
$$\sum\limits_{g\in G} r_g\alpha_g(a)r_g=\sum\limits_{g\in G}\alpha(r_ear_e), \quad \text{ and }\quad r_g(\alpha_g((a-\varepsilon)_+))r_g=(r_g\alpha_g(a)r_g-\varepsilon)_+,$$
for all $g\in G$. Thus, we have
\begin{align*}
\sum\limits_{g\in G}r_g(\alpha_g((a-\varepsilon)_+))r_g&=\sum\limits_{g\in G}r_g(\alpha_g(a)-\varepsilon)_+r_g\\
&=\sum\limits_{g\in G}(r_g\alpha_g(a)r_g-\varepsilon)_+\\
&=\left(\sum\limits_{g\in G}r_g\alpha_g(a)r_g-\varepsilon\right)_+\\
&=\left(\sum\limits_{g\in G} \alpha_g(r_ear_e)-\varepsilon\right)_+.
\end{align*}
Therefore, we conclude that
$$\left\|\sum\limits_{g\in G}\alpha_g(r_ear_e)-faf^*\right\|<\varepsilon, \quad \text{and}\quad\left\|(a-2\varepsilon)_+-h\left(\sum\limits_{g\in G} \alpha_g(r_ear_e)-\varepsilon\right)_+h^*\right\|<\varepsilon.$$
Let $(r_n)_{n\in \N}$, $(f_n)_{n\in \N}$, and $(h_n)_{n\in \N}$ be representatives of $r_e$, $f$, and $h$ in $\ell^\infty(\N, A\otimes \K)$, with $r_n$ positive for all $n\in \N$. By the previous inequalities, there exists $k\in \N$ such that
$$\left\|\sum\limits_{g\in G}\alpha_g(r_kar_k)-f_kaf_k^*\right\|<\varepsilon, \ \text{ and } \ \left\|(a-2\varepsilon)_+-h_k\left(\sum\limits_{g\in G} \alpha_g(r_kar_k)-\varepsilon\right)_+h_k^*\right\|<\varepsilon$$
hold in $A\otimes \K$. By Lemma \ref{lem: Cuntz relation} applied to the elements $\sum\limits_{g\in G}\alpha_g(r_kar_k)$ and $f_kaf_k^*$, and to the elements $(a-2\varepsilon)_+$ and $h_k\left(\sum\limits_{g\in G}\alpha(r_kar_k)-\varepsilon\right)_+h_k^*$, we deduce that
$$[(a-3\varepsilon)_+]\le \left[\left(\sum\limits_{g\in G}\alpha_g(r_kar_k)-\varepsilon\right)_+\right]\le [a].$$
Therefore,
$$x_m\ll \left[\left(\sum\limits_{g\in G}\alpha_g(r_kar_k)-\varepsilon\right)_+\right]\ll x.$$

Note that the element $\left(\sum\limits_{g\in G}\alpha_g(r_kar_k)-\varepsilon\right)_+$ belongs to $(A\otimes\K)^{\alpha}$ and so it is in the image of the inclusion map $i^s=i\otimes\id_\K\colon (A\otimes\K)^{\alpha}\to A\otimes \K$. Since $m$ is arbitrary, we deduce that $x$ is the supremum of an increasing sequence in $\mathrm{Im}(\Cu(i))$ by Lemma \ref{lem: sup}. Choose a sequence $(y_n)_{n\in\N}$ in $\Cu(A^\alpha)$ such that $(\Cu(i)(y_n))_{n\in\N}$ is increasing in $\Cu(A)$ and set $x=\sup\limits_{n\in\N} (\Cu(i)(y_n))$. Since $\Cu(i)$ is an order embedding, it follows that $(y_n)_{n\in \N}$ is itself increasing in $\Cu(A^\alpha)$. Set $y=\sup\limits_{n\in\N} y_n$. Then $\Cu(y)=x$ since $\Cu(i)$ preserves suprema of increasing sequences.
\end{proof}

\begin{corollary}\label{cor: Ctz smgp of cp}
Let $A$ be a C*-algebra and let $\alpha$ be an action of a finite group $G$ on $A$ with the Rokhlin property. Then $\Cu(A\rtimes_\alpha G)$ is order-isomorphic to the semigroup:
\begin{align*}
\left\{x\in \Cu(A)\colon \exists \ (x_n)_{n\in\N} \mbox{ in } \Cu(A)\colon
\begin{aligned}
& x_n\ll x_{n+1} \ \forall n\in\N \mbox{ and } x=\sup\limits_{n\in\N} x_n,\\
& \ \Cu(\alpha_g)(x_n)=x_n \ \forall g\in G, \forall n\in\N
\end{aligned}
\right\}.
\end{align*}
\end{corollary}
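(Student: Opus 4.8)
The plan is to deduce the corollary from Theorem~\ref{thm: Rokhlin Constraint}(ii) via a Morita equivalence. Observe first that the semigroup displayed in the statement is exactly $\Cu(A)^{\Cu(\alpha)}_{\N}$ in the notation of Definition~\ref{df: S gamma}, applied to $S=\Cu(A)$ and the family of endomorphisms $\{\Cu(\alpha_g)\}_{g\in G}$. By Theorem~\ref{thm: Rokhlin Constraint}(ii), the inclusion $i\colon A^\alpha\to A$ induces an order embedding $\Cu(i)\colon\Cu(A^\alpha)\to\Cu(A)$ whose image is $\Cu(A)^{\Cu(\alpha)}_{\N}$; hence $\Cu(i)$ corestricts to an order isomorphism $\Cu(A^\alpha)\cong\Cu(A)^{\Cu(\alpha)}_{\N}$. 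It therefore suffices to produce an order isomorphism $\Cu(A\rtimes_\alpha G)\cong\Cu(A^\alpha)$.

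For this, let $(u_g)_{g\in G}\subseteq\M(A\rtimes_\alpha G)$ be the canonical unitaries implementing $\alpha$ in the crossed product, and set $p=\frac{1}{|G|}\sum_{g\in G}u_g\in\M(A\rtimes_\alpha G)$. A direct computation gives $p=p^*=p^2$, $u_hp=pu_h=p$ for all $h\in G$, and $pap=E(a)p$ for every $a\in A$, where $E(a)=\frac{1}{|G|}\sum_{g\in G}\alpha_g(a)$ is the canonical conditional expectation of $A$ onto $A^\alpha$. It follows that $p(A\rtimes_\alpha G)p=A^\alpha p$, and that $b\mapsto bp$ is a $*$-isomorphism from $A^\alpha$ onto the hereditary subalgebra $p(A\rtimes_\alpha G)p$ of $A\rtimes_\alpha G$ (it is injective because the canonical conditional expectation $A\rtimes_\alpha G\to A$ is faithful, $G$ being finite). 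Since $\Cu$ is invariant under passing to full hereditary subalgebras, the proof reduces to showing that $p$ is full, i.e. that the closed two-sided ideal $I$ of $A\rtimes_\alpha G$ generated by $p$ equals $A\rtimes_\alpha G$. As $A$ contains an approximate unit for $A\rtimes_\alpha G$, it is enough to show $A\subseteq I$.

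To this end, fix a positive element $a\in A$, let $B=C^*(a)\subseteq A$, and use Lemma~\ref{lem: Rokhlin equivalence}(iii) to choose pairwise orthogonal positive contractions $(r_g)_{g\in G}$ in $A^\infty\cap B'$ with $\alpha_g(r_h)=r_{gh}$ for all $g,h\in G$ and $\big(\sum_{g}r_g\big)a=a$. Working in $(A\rtimes_\alpha G)^\infty\cong A^\infty\rtimes_\alpha G$, put $v=\frac{1}{\sqrt{|G|}}\sum_{g\in G}r_g^{1/2}u_g$. Using orthogonality of the $r_g$ together with $u_gr_hu_g^*=\alpha_g(r_h)=r_{gh}$ and $u_gp=p$, one computes $v^*v=r_e$ and $vv^*=\big(\sum_{g}r_g\big)p\le p$; hence $r_e\sim vv^*\precsim p$, and therefore $r_g=u_gr_eu_g^*\precsim u_gpu_g^*=p$ for every $g\in G$, inside $(A\rtimes_\alpha G)^\infty$. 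Moreover $\big(\sum_g r_g\big)a=a$ forces $a=\big(\sum_g r_g\big)a\big(\sum_g r_g\big)\le\|a\|\sum_g r_g$, so $a\precsim\sum_g r_g$. Since each $r_g$ lies in the ideal generated by $p$ in $(A\rtimes_\alpha G)^\infty$, so does $\sum_g r_g$, hence so does $a$. Finally, because $p$ is a multiplier of $A\rtimes_\alpha G$ that acts \emph{diagonally} on the sequence algebra, membership of $a\in A\rtimes_\alpha G$ in the ideal generated by $p$ in $(A\rtimes_\alpha G)^\infty$ forces $a\in I$; this last implication is a routine lifting argument using Lemma~\ref{lem: Cuntz relation}. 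Thus $A\subseteq I$, so $p$ is full.

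Combining the two halves, $A\rtimes_\alpha G$ and its full hereditary subalgebra $p(A\rtimes_\alpha G)p\cong A^\alpha$ have order-isomorphic Cuntz semigroups, and together with the identification of $\Cu(A^\alpha)$ from the first paragraph this gives $\Cu(A\rtimes_\alpha G)\cong\Cu(A)^{\Cu(\alpha)}_{\N}$, which is the asserted semigroup. The only step where the Rokhlin property is genuinely used --- and the only delicate point --- is the fullness of $p$; the rest is formal. Alternatively, one can avoid the explicit computation with $v$ by invoking the known fact that an action of a finite group with the Rokhlin property is free enough that $A\rtimes_\alpha G$ and $A^\alpha$ are Morita equivalent, and then apply Theorem~\ref{thm: Rokhlin Constraint}(ii) directly.
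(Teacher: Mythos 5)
Your argument is correct and follows the same route as the paper: identify the displayed semigroup with $\Cu(A)^{\Cu(\alpha)}_\N$, apply Theorem \ref{thm: Rokhlin Constraint}(ii) to see that $\Cu(i)$ gives an order isomorphism $\Cu(A^\alpha)\cong\Cu(A)^{\Cu(\alpha)}_\N$, and pass from $A^\alpha$ to $A\rtimes_\alpha G$ via their Morita equivalence. The only difference is that the paper simply cites \cite[Theorem 2.8]{Phillips-Freeness-of-actions} for that Morita equivalence, whereas you reprove it by checking that the averaging projection $p=\frac{1}{|G|}\sum_{g}u_g$ is full (your computation with $v$ is fine) --- a correct but unnecessary detour, as you yourself acknowledge in your closing sentence.
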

\begin{proof} Since $\alpha$ has the Rokhlin property, the fixed point algebra $A^\alpha$ is Morita equivalent to the crossed product $A\rtimes_\alpha G$ by \cite[Theorem 2.8]{Phillips-Freeness-of-actions}. Therefore, there is a natural isomorphism $\Cu(A\rtimes_\alpha G)\cong \Cu(A^\alpha)$. Denote by $i\colon A^\alpha\to A$ the natural embedding. By Theorem \ref{thm: Rokhlin Constraint}, the semigroup $\Cu(A^\alpha)$ can be naturally identified with its image under the order embedding $\Cu(i)$, which is $\Cu(A)_\N^{\Cu(\alpha)}$ again by Theorem \ref{thm: Rokhlin Constraint}. The result follows.\end{proof}

\begin{corollary}\label{cor: n-divisible}
Let $A$ be a C*-algebra, let $\alpha$ be an action of a finite group $G$ on $A$ with the Rokhlin property, and set $n=|G|$. Suppose that $\Cu(\alpha_g)=\id_{\Cu(A)}$ for every $g\in G$, and that the map multiplication by $n$ on $\Cu(A)$ is an order embedding (in other words, whenever $x,y\in\Cu(A)$ satisfy $nx\leq ny$, one has $x\leq y$.) Then the map multiplication by $n$ in $\Cu(A)$ is an order-isomorphism.
\end{corollary}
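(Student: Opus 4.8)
The plan is to prove that multiplication by $n$ on $\Cu(A)$, which I denote $\delta\colon\Cu(A)\to\Cu(A)$, is surjective; since $\delta$ is assumed to be an order embedding (in particular injective, as $\delta(x)=\delta(y)$ forces $nx\le ny$ and $ny\le nx$, hence $x=y$), this shows that $\delta$ is a bijective order embedding, and therefore an order-isomorphism. I first record two easy facts: $\delta$ is order-preserving and commutes with suprema of increasing sequences (the latter because $\sup_k n w_k=n\sup_k w_k$ by iterating axiom (O3)); and, since $\Cu(\alpha_g)=\id_{\Cu(A)}$ for all $g\in G$, one has $\delta=\sum_{g\in G}\Cu(\alpha_g)$.

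The main step is an application of Theorem~\ref{thm: Rokhlin Constraint}(ii) to the inclusion $i\colon A^\alpha\to A$, which gives $\mathrm{Im}(\Cu(i))=\overline{\mathrm{Im}\bigl(\sum_{g\in G}\Cu(\alpha_g)\bigr)}=\Cu(A)_\N^{\Cu(\alpha)}$, the overline denoting closure under suprema of increasing sequences. Because the induced action on $\Cu(A)$ is trivial, the invariance requirement in the definition of $\Cu(A)_\N^{\Cu(\alpha)}$ is vacuous, so $\Cu(A)_\N^{\Cu(\alpha)}=\Cu(A)$ by axiom (O2); combining this with the previous identities and with $\delta=\sum_{g\in G}\Cu(\alpha_g)$ shows that every $x\in\Cu(A)$ is the supremum of an increasing sequence $(z_k)_{k\in\N}$ with each $z_k\in\mathrm{Im}(\delta)$. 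Now choose $w_k\in\Cu(A)$ with $z_k=\delta(w_k)=n w_k$. From $n w_k=z_k\le z_{k+1}=n w_{k+1}$ and the order-embedding hypothesis, $(w_k)_{k\in\N}$ is increasing; let $y=\sup_k w_k$, which exists by axiom (O1). Since $\delta$ commutes with suprema of increasing sequences, $\delta(y)=\sup_k\delta(w_k)=\sup_k z_k=x$. Hence $\delta$ is surjective, completing the proof.

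I do not anticipate a real obstacle here, as the substance is already contained in Theorem~\ref{thm: Rokhlin Constraint}; the points that merit a little care are checking that $\delta$ genuinely commutes with suprema of increasing sequences — this is exactly what is used to conclude $\delta(y)=x$ — and extracting from Theorem~\ref{thm: Rokhlin Constraint} the fact that $\mathrm{Im}(\Cu(i))$ exhausts $\Cu(A)$ once the induced action on the Cuntz semigroup is trivial.
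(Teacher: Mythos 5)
Your proposal is correct and follows essentially the same route as the paper's own proof: both apply Theorem~\ref{thm: Rokhlin Constraint}(ii), use triviality of the induced action to rewrite the conclusion as $\overline{n\Cu(A)}=\Cu(A)$, pull the resulting increasing sequence back through the order embedding, and conclude via the compatibility of multiplication by $n$ with suprema. The only difference is that you spell out a few points the paper leaves implicit (injectivity of $\delta$, the vacuousness of the invariance condition, and the commutation with suprema), which is harmless.
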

\begin{proof} It suffices to show that for all $x\in\Cu(A)$, there exists $y\in \Cu(A)$ such that $x=ny$. By Theorem \ref{thm: Rokhlin Constraint} (ii), we have
$$\overline{\mathrm{Im}\left(\sum\limits_{g\in G}\Cu(\alpha_g)\right)}=\Cu(A)_\N^{\Cu(\alpha)}.$$
Since $\Cu(\alpha_g)=\id_{\Cu(A)}$ for all $g\in G$, this identity can be rewritten as
$$\overline{n\Cu(A)}=\Cu(A).$$
In particular, if $x$ is an element in $\Cu(A)$, then there exists a sequence $(y_k)_{k\in\N}$ in $\Cu(A)$ such that $(ny_k)_{k\in\N}$ is increasing and $x=\sup\limits_{k\in\N}(ny_k)$. Since $(ny_k)_{k\in\N}$ is increasing, it follows from our assumptions that $(y_k)_{k\in\N}$ is increasing as well. Set $y=\sup\limits_{k\in\N}y_k$. Then
$$x=\sup\limits_{k\in\N} (ny_k)=n\sup\limits_{k\in\N} y_k=ny,$$
and the claim follows.
\end{proof}

Let $A$ be a C*-algebra and let $p$ and $q$ be projections in $A$.
We say that $p$ and $q$ are \emph{Murray-von Neumann equivalent}, and
denote this by $p\sim_{\mathrm{MvN}}q$, if there exists $v\in A$ such that $p=v^*v$ and $q=vv^*$.
We say that $p$ is \emph{Murray-von Neumann subequivalent} to $q$, and denote this by $p\precsim_{\mathrm{MvN}} q$, if there is a projection $p'\in A$ such that $p\sim_{\mathrm{MvN}}p'$ and $p'\le q$.
The projection
$p$ is said to be \emph{finite} if whenever $q$ is a projection in $A$ with $q\leq p$ and $q\sim_{\mathrm{MvN}}p$, then $q=p$.

If $A$ is unital, then $A$ is said to be \emph{finite} if its unit is a finite projection. Moreover, $A$ is said to be
\emph{stably finite} if $\M_n(A)$ is finite for all $n\in \N$. If $A$ is not unital, we say that $A$ is (stably) finite if so
is its unitization $\widetilde{A}$.

\begin{lemma}\label{lem: projections}
Let $A$ be a stably finite C*-algebra and let $p\in A\otimes \K$ be a projection. Suppose that there are positive elements $a,b\in A\otimes\K$ such that $[p]=[a]+[b]$ in $\Cu(A)$. Then $a$ and $b$ are Cuntz equivalent to projections in $A\otimes\K$ (see the comments before Lemma~2.4 for the definition of Cuntz equivalence).
\end{lemma}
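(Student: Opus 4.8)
The plan is to reduce the statement to showing that a hereditary subalgebra attached to $a+b$ is unital, and then to read off spectral gaps for $a$ and $b$. Using the definition of addition in $\Cu(A)$, I would first replace $a$ and $b$ by orthogonal Cuntz-equivalent representatives in $A\otimes\K$, and set $c=a+b$, so that $[c]=[a]+[b]=[p]$ and hence $c\sim p$. Let $D=\overline{c(A\otimes\K)c}$; then $c$ is strictly positive in $D$, and $a,b$ are orthogonal positive elements of $D$ with $a+b=c$. The crux of the argument will be that \emph{$D$ is unital}. Granting this, $c$ is a strictly positive element of a unital C*-algebra, hence invertible in $D$, so $c\ge\gamma\,1_D$ for some $\gamma>0$; since $a\perp b$, the support projections $r_a,r_b$ (in $D^{**}$) are orthogonal with $r_a+r_b=r_c=1_D$, and multiplying $c\ge\gamma\,1_D$ on both sides by $r_a$ gives $a=r_acr_a\ge\gamma r_a$. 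Consequently $\mathrm{sp}(a)\subseteq\{0\}\cup[\gamma,\|a\|]$, so $a$ is Cuntz equivalent to the projection $\chi_{[\gamma,\infty)}(a)\in D\subseteq A\otimes\K$; by symmetry the same holds for $b$, which proves the lemma.

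To see that $D$ is unital, note that $c\sim p$ gives $p\precsim c$; since $p$ is a projection, the standard fact that a projection Cuntz-subequivalent to $c$ is Murray--von Neumann equivalent to a projection in $\overline{c(A\otimes\K)c}$ (first pass to $p\precsim(c-\delta)_+$ for some $\delta>0$, then a partial-isometry computation) produces a projection $q\in D$ with $q\sim_{\mathrm{MvN}}p$; in particular $[q]=[p]=[c]$ and $q\precsim c$. Suppose, for contradiction, that $D$ is not unital, and let $1$ denote the unit of $\M(D)$. Put $z=(1-q)c(1-q)\in D$. If $z=0$ then $c^{1/2}(1-q)=0$, which forces $qcq=c$ and hence that $q$ acts as a unit on $D=\overline{c(A\otimes\K)c}$, contradicting our assumption; so $z\ne0$. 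Moreover $z\perp q$, and since $c$ is strictly positive in $D$ every element of $D$, in particular $q+z$, is Cuntz subequivalent to $c$; therefore $[q]+[z]=[q+z]\le[c]=[q]$ in $\Cu(A)$. This contradicts the following claim, which is the heart of the matter.

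\emph{Claim: if $A$ is stably finite, $q\in A\otimes\K$ is a projection, and $z\in(A\otimes\K)_+$ satisfies $[q]+[z]\le[q]$ in $\Cu(A)$, then $z=0$.} Here the corner $Q=q(A\otimes\K)q$ is unital with unit $q$, and it is stably finite: stable finiteness of $A$ means $\M_n(\widetilde A)$ is finite for all $n$, so every projection of $A\otimes\K$ --- being equivalent to a subprojection of the unit of some $\M_n(\widetilde A)$ --- is finite, and in particular so are the projections $q\otimes 1_k$, which are the units of $\M_k(Q)$. Iterating $[q]+[z]\le[q]$ gives $n[z]\le[q]$ for all $n\in\N$. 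Since $[z]\le[q]$ and positive elements Cuntz-subequivalent to a projection are Cuntz equivalent to elements of the corner, for each $\varepsilon>0$ there is $z_\varepsilon\in Q_+$ with $z_\varepsilon\sim(z-\varepsilon)_+$, so that $n[z_\varepsilon]\le[1_Q]$ in $\Cu(Q)$ for every $n$; if $z\ne0$ we may choose $\varepsilon$ with $z_\varepsilon\ne0$. But then $\sup_n n[z_\varepsilon]$ would be a nonzero element $w\in\Cu(Q)$ with $w+w=w$ and $w\le[1_Q]$, i.e.\ a properly infinite positive element dominated by the unit, which is impossible when $Q$ is stably finite (a properly infinite positive element generates a hereditary subalgebra that is not stably finite; alternatively one may apply the Blackadar--Handelman theorem to the unital stably finite $Q$ to get a quasitracial state $\tau$ and note $n\,d_\tau(z_\varepsilon)=d_\tau(nz_\varepsilon)\le d_\tau(1_Q)=1$). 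Hence $z_\varepsilon=0$ for all $\varepsilon>0$, so $z=0$.

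The reductions, the spectral-gap computation in the unital corner, and the production of the projection $q$ in $D$ are all routine. The step I expect to be the main obstacle is the Claim: ruling out that a projection class $[q]$ can absorb a nonzero positive element. This is exactly where stable finiteness is used essentially --- through the finiteness of every $q\otimes 1_k$ and the resulting supply of dimension functions on the unital corner $Q$ --- and it requires some care, since an individual quasitrace on $Q$ need not be faithful; one therefore wants to phrase this step either via the nonexistence of a nonzero properly infinite element of $\Cu(Q)$ dominated by $[1_Q]$ in a stably finite algebra, or via a separating family of dimension functions rather than a single one.
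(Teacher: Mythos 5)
Your reduction to the Claim is sound, and the Claim itself is true --- it is essentially what the paper proves --- but both arguments you propose for it fail, and this is a genuine gap rather than a routine verification. The trouble is that in passing from $[q]+[z]\le[q]$ to $n[z]\le[q]$ for all $n$ you discard exactly the information that contradicts finiteness. The weakened statement does not force $z=0$ in a stably finite algebra: take $Q=\widetilde{\K}$ (the unitization of the compacts, which is AF and hence stably finite) and let $z=e$ be a rank-one projection in $\K$; then $n[e]$ is the class of a rank-$n$ projection $f_n\le 1_{\widetilde{\K}}$, so $n[e]\le[1_{\widetilde{\K}}]$ for every $n$, yet $e\neq 0$. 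The same example refutes both of your fallbacks. The element $w=\sup_n n[e]\in\Cu(\widetilde{\K})$ is represented by a strictly positive element $h$ of $\K$; it satisfies $w+w=w$, $w\neq 0$, $w\le[1_{\widetilde{\K}}]$, and $\overline{h\widetilde{\K}h}=\K$ is stably finite --- so nonzero properly infinite elements of the Cuntz semigroup dominated by the class of the unit do occur in stably finite unital C*-algebras, and such an element need not generate an infinite hereditary subalgebra. The dimension-function route is circular for the same reason: any dimension function $d$ normalized at $[q]$ satisfies $n\,d(z)\le d(q)=1$ and hence is automatically blind to $z$, so no family of normalized dimension functions can ever detect whether such a $z$ is nonzero.

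What does work --- and is how the paper argues --- is to use the full strength of $q+z\precsim q$ with $q\perp z$: by Lemma 2.3 (iv) of \cite{Kirchberg-Rordam}, for each $\delta>0$ there is $x$ with $x^*x=q+(z-\delta)_+$ and $xx^*\in q(A\otimes\K)q$; the polar decomposition $x=v|x|$ in the bidual then yields a projection $q'=vqv^*$ Murray--von Neumann equivalent to $q$ and an orthogonal positive element $v(z-\delta)_+v^*$ with $q'+v(z-\delta)_+v^*$ lying in the corner $q(A\otimes\K)q$, and finiteness of the single projection $q$ forces $q'=q$ and $(z-\delta)_+=0$ for all $\delta$. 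If you substitute this for your proof of the Claim, the rest of your argument (producing $q\in D$, the dichotomy on $(1-q)c(1-q)$, and the spectral-gap computation once $D$ is unital) goes through. Note, though, that the paper reaches the conclusion more directly: it chooses $\varepsilon$ with $[p]=[(a-\varepsilon)_+]+[(b-\varepsilon)_+]$, sets $c$ orthogonal to $p$ with $[c]=[f_\varepsilon(a)]+[f_\varepsilon(b)]$, applies the above absorption argument to deduce $c=0$, and reads off $f_\varepsilon(a)=f_\varepsilon(b)=0$, with no need to discuss unitality of $\overline{c(A\otimes\K)c}$ or support projections in the bidual.
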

\begin{proof}
Let $a$ and $b$ be elements in $A\otimes\K$ as in the statement. By the comments before Lemma \ref{lem: morphism}, we have
$$[a]=\sup\limits_{\varepsilon>0} [(a-\varepsilon)_+] \ \ \mbox{ and } \ \ [b]=\sup\limits_{\varepsilon>0} [(b-\varepsilon)_+].$$
Since $[p]\ll [p]$, there exists $\varepsilon>0$ such that $[p]=[(a-\varepsilon)_+]+[(b-\varepsilon)_+]$. Choose a function
$f_\varepsilon\in \mathrm C_0(0,\infty)$ that is zero on the interval $[\varepsilon, \infty)$, nonzero at every point of $(0,\varepsilon)$
and $\|f_\varepsilon\|_\infty\leq 1$. Then
$$[p]+[f_\varepsilon(a)]+[f_\varepsilon(b)]= [(a-\varepsilon)_+]+[f_\varepsilon(a)]+[(b-\varepsilon)_+]+[f_\varepsilon(b)]\le [a]+[b]=[p].$$
Hence,
$[p]+[f_\varepsilon(a)]+[f_\varepsilon(b)]=[p]$.
Choose $c\in (A\otimes \K)_+$ such that $[c]=[f_\varepsilon(a)]+[f_\varepsilon(b)]$ and $cp=0$. Then $p+c\precsim p$. By
\cite[Lemma 2.3 (iv)]{Kirchberg-Rordam}, for every $\delta>0$ there exists $x\in A\otimes \K$ such that
$$p+(c-\delta)_+=x^*x, \quad xx^*\in p(A\otimes \K)p.$$
Fix $\delta>0$ and let $x$ be as above. Let $x=v|x|$ be the polar decomposition of $x$ in the bidual of $A\otimes \K$. Set $p'=vpv^*$ and
$c'=v(c-\delta)_+v^*$. Then $p'$ is a projection, $p'$ and $c'$ are orthogonal, $p$ and $p'$ are Murray-von Neumann equivalent, and
$p'+c'\in pAp$. Using stable finiteness of $A$ we conclude that $p=p'$ and $c'=0$. It follows that $(c-\delta)_+=0$ for all $\delta>0$,
and thus $c=0$. Hence, $f_\varepsilon(b)=f_\varepsilon(a)=0$ and in particular, $a$ and $b$ have a gap in their spectra. Therefore, they
are Cuntz equivalent to projections.
\end{proof}

Recall that the \emph{Murray-von Neumann semigroup} of $A$, denoted by $\mathrm V(A)$, is defined as the quotient of the set of projections of $A\otimes \K$ by the Murray-von Neumann equivalence relation.

Note that $p\precsim_{\Cu} q$ if and only if $p\precsim_{\mathrm{MvN}} q$.
On the other hand, $p\precsim_{\mathrm{MvN}} q$ and $q\precsim_{\mathrm{MvN}} p$ do not in general imply that $p\sim_{\mathrm{MvN}} q$, although this is the case whenever $A$ is finite.
In particular, if $A$ is finite, then $p\sim_{\Cu} q$ if and only if $p\sim_{\mathrm{MvN}} q$.
Hence, if $A$ is stably finite, then the semigroup $\mathrm{V}(A)$ can be identified with the ordered subsemigroup of $\Cu(A)$ consisting of the Cuntz equivalence classes of projections of $A\otimes \K$.

Recall that if $S$ is a semigroup in $\CCu$ and $x$ and $y$ are elements of $S$, we say that $x$ is \emph{compactly contained} in $y$, and denote this by $x\ll y$, if for every increasing sequence $(y_n)_{n\in\N}$ in $S$ such that $y=\sup\limits_{n\in\N}y_n$, there exists $n_0\in\N$ such that $x\leq y_n$ for all $n\geq n_0$.

\begin{definition}\label{df: compact} Let $S$ be a semigroup in $\CCu$ and let $x$ be an element of $S$. We say that $x$ is \emph{compact} if $x\ll x$. Equivalently, $x$ is compact if whenever $(x_n)_{n\in\N}$ is a sequence in $S$ such that $x=\sup\limits_{n\in\N}x_n$, then there exists $n_0\in\N$ such that $x_n=x$ for all $n\geq n_0$.\end{definition}

It is easy to check that the Cuntz class $[p]\in\Cu(A)$ of any projection $p$ in a C*-algebra $A$ (or in $A\otimes\K$) is a compact element in $\Cu(A)$. Moreover, when $A$ is stably finite, then every compact element of $\Cu(A)$ is the Cuntz class of a projection in $A\otimes \K$ by \cite[Theorem 3.5]{Brown-Ciuperca}. In particular, $\mathrm V(A)$ can be identified with the semigroup of compact elements of $\Cu(A)$ if $A$ is a stably finite C*-algebra.

When studying stably finite C*-algebras in connection with finite group actions with the Rokhlin property, the following lemma is often times useful. The result may be interesting in its own right, and could have been proved in \cite{Osaka-Phillips} since it is a direct application of their methods.

\begin{lemma}\label{lem: stable finiteness preserved}
Let $G$ be a finite group, let $A$ be a unital stably finite C*-algebra and let $\alpha\colon G\to\Aut(A)$ be an action with the Rokhlin property. Then the crossed product $A\rtimes_\alpha G$ and the fixed point algebra $A^\alpha$ are stably finite.\end{lemma}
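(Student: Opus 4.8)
The plan is to handle the two algebras in turn, dealing with $A^\alpha$ by an elementary permanence argument and then deducing the statement for $A\rtimes_\alpha G$ from it via the Morita equivalence provided by the Rokhlin property.

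First I would treat the fixed point algebra. Since each $\alpha_g$ is an automorphism of the unital C*-algebra $A$, it fixes $1_A$; hence $A^\alpha$ is a C*-subalgebra of $A$ containing $1_A$, and for every $n\in\N$ the algebra $\M_n(A^\alpha)$ is a unital C*-subalgebra of $\M_n(A)$ with the same unit $1_n$. A unital C*-subalgebra of a finite C*-algebra is finite: if $w\in\M_n(A^\alpha)$ satisfies $w^*w=1_n$, then already $ww^*=1_n$ because $\M_n(A)$ is finite. Thus $\M_n(A^\alpha)$ is finite for every $n$; that is, $A^\alpha$ is stably finite. (This step uses nothing about the Rokhlin property.)

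Next I would pass to the crossed product. Because $\alpha$ has the Rokhlin property, $A^\alpha$ is Morita equivalent to $A\rtimes_\alpha G$ by \cite[Theorem 2.8]{Phillips-Freeness-of-actions} --- this is exactly where the hypothesis enters, just as in the proof of Corollary \ref{cor: Ctz smgp of cp}. Since $A$ is unital, both $A^\alpha$ and $A\rtimes_\alpha G$ are unital, hence $\sigma$-unital, and so this Morita equivalence is implemented by a stable isomorphism $(A\rtimes_\alpha G)\otimes\K\cong A^\alpha\otimes\K$. I would then invoke the standard fact that a unital C*-algebra $B$ is stably finite if and only if every projection in $B\otimes\K$ is finite: one implication is trivial, and the other holds because every projection in $B\otimes\K$ is Murray--von Neumann equivalent to a projection in some $\M_n(B)$, while finiteness of $1_n$ in $\M_n(B)$ forces finiteness of every subprojection $p\le 1_n$ by considering $1_n=p+(1_n-p)$. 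As this property is preserved under *-isomorphism, stable finiteness of $A^\alpha$ transfers to $A\rtimes_\alpha G$, and the proof is complete.

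I do not expect a genuine obstacle here: the argument is just an assembly of elementary permanence facts, namely that stable finiteness passes to unital C*-subalgebras and is a stable-isomorphism (equivalently, Morita) invariant. The only point requiring a little care is the translation between the two equivalent descriptions of stable finiteness for a unital algebra. Should one prefer a route that avoids \cite{Phillips-Freeness-of-actions}, one may instead observe that the regular representation embeds $A\rtimes_\alpha G$ unitally into $\M_{|G|}(A)$, which is stably finite whenever $A$ is; or one may argue in the spirit of \cite{Osaka-Phillips}, using a Rokhlin tower to transport an infinite projection over $A\rtimes_\alpha G$ down into a matrix algebra over $A$. Either variant works equally well.
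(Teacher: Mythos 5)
Your proof is correct and follows essentially the same route as the paper: the fixed point algebra is handled as a unital C*-subalgebra of the stably finite algebra $A$, and the crossed product is handled via the stable isomorphism with $A^\alpha$ coming from \cite[Theorem 2.8]{Phillips-Freeness-of-actions}. You merely supply the routine details (invariance of stable finiteness under stable isomorphism) that the paper leaves implicit.
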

\begin{proof} The fixed point algebra $A^\alpha$, being a unital subalgebra of $A$, is stably finite. On the other hand, the crossed product $A\rtimes_\alpha G$, being stably isomorphic to $A^\alpha$ by \cite[Theorem 2.8]{Phillips-Freeness-of-actions}, must itself also be stably finite.
\end{proof}


For unital, simple C*-algebras, part (ii) of the theorem below was first proved by Izumi in \cite{Izumi-I}.
The proof in our context follows completely different ideas.

\begin{theorem}\label{thm: image of inclusion on K-theory}
Let $A$ be a stably finite C*-algebra and let $\alpha$ be an action of a finite group $G$ on $A$ with the Rokhlin property. Let $i\colon A^\alpha\to A$ be the inclusion map.
\begin{itemize}
\item[(i)] The map $\mathrm V(i)\colon \mathrm V(A^\alpha)\to \mathrm V(A)$ is an order embedding and
\begin{align*}
&\mathrm{Im}(\mathrm V(i))=\mathrm{Im}\left(\sum\limits_{g\in G}\mathrm V(\alpha_g)\right)=\left\{x\in \mathrm V(A)\colon \mathrm V(\alpha_g)(x)=x, \, \forall  g\in G\right\}.
\end{align*}
\item[(ii)] If $A$ has an approximate identity consisting of projections, then $\mathrm K_0(i)\colon \mathrm K_0(A^\alpha)\to \mathrm K_0(A)$ is an order embedding and
\begin{align*}
\mathrm{Im}(\mathrm K_0(i))=\mathrm{Im}\left(\sum\limits_{g\in G}\mathrm K_0(\alpha_g)\right)=\left\{x\in \mathrm K_0(A)\colon \mathrm K_0(\alpha_g)(x)=x, \, \forall  g \in G\right\}.
\end{align*}
\end{itemize}
\end{theorem}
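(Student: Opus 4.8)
The plan is to deduce both parts from Theorem~\ref{thm: Rokhlin Constraint} and Lemma~\ref{lem: projections}, using two elementary remarks: first, $A^\alpha$ is stably finite, since after unitizing it is a C*-subalgebra of the stably finite algebra $A$; and second, for a stably finite C*-algebra $B$ the semigroup $\mathrm{V}(B)$ is exactly the sub-ordered-semigroup of $\Cu(B)$ consisting of the classes of projections, which by \cite[Theorem 3.5]{Brown-Ciuperca} coincides with the set of compact elements of $\Cu(B)$.

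For part (i) I would first note that $\mathrm{V}(i)$ is the restriction of $\Cu(i)$ to classes of projections, hence an order embedding because $\Cu(i)$ is, by Theorem~\ref{thm: Rokhlin Constraint}(ii). To describe the image, the containments $\mathrm{Im}(\mathrm{V}(i))\subseteq\{x\in\mathrm{V}(A):\mathrm{V}(\alpha_g)(x)=x\ \forall g\}$ and $\mathrm{Im}(\sum_{g}\mathrm{V}(\alpha_g))\subseteq\{x:\mathrm{V}(\alpha_g)(x)=x\ \forall g\}$ are immediate, so only the reverse inclusions require work. Given a $\mathrm{V}(\alpha)$-fixed $x=[q]\in\mathrm{V}(A)$, it is a compact, $\Cu(\alpha)$-invariant element of $\Cu(A)$, hence $x=\Cu(i)(y)$ by Theorem~\ref{thm: Rokhlin Constraint}(ii); since $\Cu(i)$ is an injective morphism in $\CCu$ and $x\ll x$, the element $y$ is compact as well, so $y$ is the class of a projection $p\in A^\alpha\otimes\K$ and $x=\mathrm{V}(i)([p])$. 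Likewise, the argument proving the first inclusion in Theorem~\ref{thm: Rokhlin Constraint}(ii) writes $x$ as the supremum of an increasing sequence from $\mathrm{Im}(\sum_{g}\Cu(\alpha_g))$, so by compactness $x=\sum_{g}\Cu(\alpha_g)([c])$ for a single $c\in(A\otimes\K)_+$; since $x$ is a projection class, Lemma~\ref{lem: projections} applied to this $|G|$-fold sum forces $c$ to be Cuntz equivalent to a projection $p_0$, whence $x=\sum_{g}\mathrm{V}(\alpha_g)([p_0])$.

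For part (ii), since $A$ has an approximate identity of projections, $\mathrm{K}_0(A)$ is the Grothendieck group of $\mathrm{V}(A)$, with positive cone the image of $\mathrm{V}(A)$. The crossed product $A\rtimes_\alpha G$ also has an approximate identity of projections (any such approximate identity of $A$ remains one, $G$ being finite) and is Morita equivalent to $A^\alpha$ by \cite[Theorem 2.8]{Phillips-Freeness-of-actions}, so $\mathrm{K}_0(A^\alpha)$ is likewise the Grothendieck group of $\mathrm{V}(A^\alpha)$. By part (i), $\mathrm{V}(i)$ is an order isomorphism of $\mathrm{V}(A^\alpha)$ onto the subsemigroup $M^+:=\{x\in\mathrm{V}(A):\mathrm{V}(\alpha_g)(x)=x\ \forall g\}$, and under this identification $\mathrm{K}_0(i)$ is the induced map on Grothendieck groups. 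For injectivity, one observes that a relation $a+c=b+c$ in $\mathrm{V}(A)$ with $a,b\in M^+$ becomes, after adding $\sum_{g\neq e}\mathrm{V}(\alpha_g)(c)$ to both sides, a relation $a+c^{\#}=b+c^{\#}$ with $c^{\#}=\sum_{g}\mathrm{V}(\alpha_g)(c)\in M^+$, forcing $[a]=[b]$ in the Grothendieck group of $M^+$. For the image I would show that every $\mathrm{K}_0(\alpha)$-fixed $\zeta$ can be written as $[P]-[Q]$ with $P,Q$ projections in $A\otimes\K$ whose $\mathrm{V}(A)$-classes lie in $M^+$: starting from $\zeta=[p]-[q]$ and projections $r_g$ with $[\alpha_g p]+[q]+[r_g]=[p]+[\alpha_g q]+[r_g]$ in $\mathrm{V}(A)$, a direct computation shows that padding $[p]$ and $[q]$ by the single $\mathrm{V}(\alpha)$-fixed class $\sum_{g\neq e}[\alpha_g q]+\sum_{k}\mathrm{V}(\alpha_k)\big(\sum_{h}[r_h]\big)$ makes both resulting classes $\mathrm{V}(\alpha)$-fixed without changing the difference; pulling $P$ and $Q$ back through $\mathrm{V}(i)$ then exhibits $\zeta\in\mathrm{Im}(\mathrm{K}_0(i))$, and combining with part (i) gives the identification with $\mathrm{Im}(\sum_{g}\mathrm{K}_0(\alpha_g))$. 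Finally, the only nontrivial part of the order-embedding claim is that $\mathrm{K}_0(i)(\xi)\geq 0$ implies $\xi\geq 0$: here $\mathrm{K}_0(i)(\xi)=[x]$ is a $\mathrm{K}_0(\alpha)$-fixed projection class, so by the construction just described (with $q=0$) one gets $[x]=[X]-[R'']$ with $[X],[R'']\in M^+$ and $[X]=[x]+[R'']\geq[R'']$ in $\mathrm{V}(A)$; transporting this inequality back through the order isomorphism $\mathrm{V}(i)$ and using injectivity of $\mathrm{K}_0(i)$ identifies $\xi$ with a difference of the two preimages, which is $\geq 0$.

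The main obstacle I anticipate is precisely the passage from $\Cu$ and $\mathrm{V}$ to $\mathrm{K}_0$: there is no division by $|G|$ in a Grothendieck group, so naively symmetrizing a $\mathrm{K}_0(\alpha)$-fixed class only recovers $|G|$ times it. The way around this is to exploit the surjectivity in part (i)---every $\mathrm{V}(\alpha)$-fixed element already has the form $\sum_{g}\mathrm{V}(\alpha_g)(\cdot)$ and lies in $\mathrm{Im}(\mathrm{V}(i))$---to replace a representing projection of a given $\mathrm{K}_0$-class by one whose $\mathrm{V}$-class is honestly $\mathrm{V}(\alpha)$-fixed, after which the order-embedding property of $\mathrm{V}(i)$ does the job; the care needed is to arrange the auxiliary ``padding'' classes so that they too are $\mathrm{V}(\alpha)$-fixed. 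A secondary technical point is verifying that $\mathrm{K}_0(A^\alpha)$ is the Grothendieck group of $\mathrm{V}(A^\alpha)$, which is what forces the detour through the crossed product.
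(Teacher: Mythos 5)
Your proposal is correct, and for part (i) it is essentially the paper's argument: the same chain of inclusions, the same use of compactness of projection classes, Theorem~\ref{thm: Rokhlin Constraint}, and stable finiteness of $A$ and $A^\alpha$ to convert compact Cuntz classes into projection classes. The one local difference is that where you need to see that the single summand $[c]$ in $[p]=\sum_{g}\Cu(\alpha_g)([c])$ is a projection class, you invoke Lemma~\ref{lem: projections} (iterated over the $|G|$ summands), whereas the paper appeals to the claim that in a $\CCu$-semigroup a sum is compact if and only if each summand is, and then to stable finiteness; your route is arguably safer, since Lemma~\ref{lem: projections} is actually proved in the paper while the compactness-of-summands claim is only asserted. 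The real divergence is in part (ii): the paper disposes of it in one sentence (``follows from part (i) and the fact that $\mathrm K_0$ is the Grothendieck group of $\mathrm V$''), while you carry out the reduction in full — the detour through $A\rtimes_\alpha G$ to justify that $\mathrm K_0(A^\alpha)$ is the Grothendieck group of $\mathrm V(A^\alpha)$, the symmetrization $c^{\#}=\sum_g\mathrm V(\alpha_g)(c)$ giving injectivity, the padding of $[p]$ and $[q]$ by the $\mathrm V(\alpha)$-fixed class $\sum_{g\neq e}[\alpha_g q]+\sum_k\mathrm V(\alpha_k)(\sum_h[r_h])$ to realize a fixed $\mathrm K_0$-class as a difference of elements of $\mathrm{Im}(\mathrm V(i))$, and the order-embedding step. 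I checked the padding computation and it does make both padded classes $\mathrm V(\alpha)$-invariant without changing the difference, so this correctly circumvents the ``no division by $|G|$'' obstruction you identify; these are exactly the points the paper's one-line proof of (ii) glosses over, and your write-up supplies them correctly.
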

\begin{proof}
(i) The fact that $\mathrm V(i)$ is an order embedding is a consequence of Theorem \ref{thm: Rokhlin Constraint} and the remarks before and after Definition~\ref{df: compact}. Let us now show the inclusions
\begin{equation}\label{inclusions} \mathrm{Im}(\mathrm V(i))\subseteq\mathrm{Im}\left(\sum\limits_{g\in G}\mathrm V(\alpha_g)\right)\subseteq\left\{x\in \mathrm V(A)\colon \mathrm V(\alpha_g)(x)=x \ \forall \ g\in G\right\}\subseteq \mathrm{Im}(\mathrm V(i)).\end{equation}
Let $p\in A^\alpha\otimes \K$ be a projection. By Theorem \ref{thm: Rokhlin Constraint}, there exists a sequence $(a_n)_{n\in\N}$ in $(A\otimes\K)_+$ such that $\left(\sum\limits_{g\in G}\Cu(\alpha_g)([a_n])\right)_{n\in\N}$ is increasing and
$$[i(p)]=\sup\limits_{n\in\N}\left(\sum\limits_{g\in G}\Cu(\alpha_g)([a_n])\right).$$
Since $[i(p)]$ is a compact element in $\Cu(A)$, it follows that there exists $n_0\in \N$ such that $[i(p)]=\sum\limits_{g\in G}\Cu(\alpha_g)([a_n])$ for all $n\geq n_0$. Fix $m\geq n_0$. It is easy to check that if $S$ is a semigroup in the category $\CCu$, then a sum of elements in $S$ is compact if and only if each summand is compact. It follows that $\Cu(\alpha_g)([a_m])$ is compact for all $g\in G$. In particular, and denoting the unit
of $G$ by $e$, we deduce that $[a_m]=\Cu(\alpha_e)([a_m])$ is compact. Since $A$ is stably finite by assumption, there exists a projection $q\in A\otimes\K$ such that $[q]=[a_m]$. Thus
$$\mathrm V(i)([p])=\sum\limits_{g\in G}\mathrm V(\alpha_g)([q])\in \mathrm{Im}\left(\sum\limits_{g\in G}\mathrm V(\alpha_g)\right),$$
showing that the first inclusion in (\ref{inclusions}) holds.

Using the fact that $\alpha_h\circ \left(\sum\limits_{g\in G}\alpha_g\right)=\sum\limits_{g\in G}\alpha_g$ for all $h\in G$, it is easy to check that
$$\mathrm{Im}\left(\sum\limits_{g\in G}\mathrm V(\alpha_g)\right)\subseteq\left\{x\in \mathrm V(A)\colon \mathrm V(\alpha_g)(x)=x, \, \forall  g\in G\right\},$$
thus showing that the second inclusion also holds.

We proceed to prove the third inclusion. Let $x\in \mathrm{V}(A)$ be such that $\mathrm{V}(\alpha_g)(x)=x$ for all $g\in G$. Note that $x$ is compact as an element in $\Cu(A)$. It follows that $\Cu(\alpha_g)(x)=x$ for all $g\in G$ and hence by Theorem \ref{thm: Rokhlin Constraint} there exists $a\in (A^\alpha\otimes\K)_+$ such that $\Cu(i)([a])=x$. Since the map $\Cu(i)$ is an order embedding again by Theorem \ref{thm: Rokhlin Constraint}, one concludes that $[a]$ is compact.

Finally, the fixed point algebra $A^\alpha$ is stably finite by Lemma \ref{lem: stable finiteness preserved} and thus there is a projection $p\in A^\alpha\otimes\K$ such that $[p]=[a]$ in $\Cu(A^\alpha)$. It follows that $\Cu(i)([p])=x$, showing that the third inclusion in (\ref{inclusions}) is also true.

(ii) Follows using the first part, together with the fact that the $\mathrm K_0$-group of a C*-algebra containing an approximate identity consisting of projections, agrees with the Grothendieck group of the Murray-von Neumann semigroup of the algebra; see Proposition~5.5.5 in \cite{Blackadar-book}.
\end{proof}

In the following corollary, the picture of $\mathrm V(A\rtimes_\alpha G)$ is valid for arbitrary $A$.

\begin{corollary}\label{cor: K-thy of cp} Let $A$ be a stably finite C*-algebra containing an approximate identity
consisting of projections, and let $\alpha$ be an action of a finite group $G$ on $A$ with the Rokhlin property. Then there are isomorphisms
\begin{align*}
\mathrm V(A\rtimes_\alpha G)&\cong \left\{x\in \mathrm V(A)\colon \mathrm V(\alpha_g)(x)=x, \, \forall g\in G\right\},\\
\mathrm K_\ast(A\rtimes_\alpha G)&\cong \left\{x\in \mathrm K_\ast(A)\colon \mathrm K_\ast(\alpha_g)(x)=x, \, \forall g\in G\right\}.
\end{align*}\end{corollary}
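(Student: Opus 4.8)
The plan is to transport the problem to the fixed-point algebra $A^{\alpha}$ and then quote Theorem~\ref{thm: image of inclusion on K-theory}. Since $\alpha$ has the Rokhlin property, the algebras $A^{\alpha}$ and $A\rtimes_{\alpha}G$ are Morita equivalent by \cite[Theorem~2.8]{Phillips-Freeness-of-actions}; exactly as in the proof of Corollary~\ref{cor: Ctz smgp of cp}, this produces natural isomorphisms $\mathrm{V}(A\rtimes_{\alpha}G)\cong \mathrm{V}(A^{\alpha})$ and $\mathrm{K}_{\ast}(A\rtimes_{\alpha}G)\cong \mathrm{K}_{\ast}(A^{\alpha})$, both $\mathrm{V}$ and $\mathrm{K}_{\ast}$ being invariants of the Morita equivalence class. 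Writing $i\colon A^{\alpha}\to A$ for the inclusion, it therefore suffices to prove that $\mathrm{V}(i)$ and $\mathrm{K}_{\ast}(i)$ are injective with images the $G$-fixed points of $\mathrm{V}(A)$ and $\mathrm{K}_{\ast}(A)$, respectively. Observe that $A^{\alpha}$ is stably finite, being a C*-subalgebra of the stably finite algebra $A$.

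The statement for $\mathrm{V}$ and the $\mathrm{K}_{0}$-part of the statement for $\mathrm{K}_{\ast}$ are then immediate: they are parts (i) and (ii) of Theorem~\ref{thm: image of inclusion on K-theory} applied to $i$. Only the $\mathrm{K}_{0}$-part uses that $A$ has an approximate identity of projections -- this is what passes from $\mathrm{V}$ to $\mathrm{K}_{0}$ via the Grothendieck construction (\cite[Proposition~5.5.5]{Blackadar-book}) -- which is why, as noted in the remark preceding the corollary, the description of $\mathrm{V}(A\rtimes_{\alpha}G)$ is valid for every stably finite $A$.

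The one ingredient not supplied by the Cuntz and Murray--von Neumann machinery of this section (which is insensitive to $\mathrm{K}_{1}$) is that $\mathrm{K}_{1}(i)\colon \mathrm{K}_{1}(A^{\alpha})\to \mathrm{K}_{1}(A)$ is injective with image $\{x\in\mathrm{K}_{1}(A)\colon \mathrm{K}_{1}(\alpha_{g})(x)=x\ \forall g\in G\}$. I would establish this by a Rokhlin-twisting argument in the spirit of Propositions~\ref{prop: uniqueness} and \ref{prop: existence}. For surjectivity onto the fixed subgroup, take $x\in\mathrm{K}_{1}(A)$ with $\mathrm{K}_{1}(\alpha_{g})(x)=x$ for all $g$; after the usual reductions (increasing the matrix size, homotoping the scalar part to $1$) one represents $x$ by a unitary $u=1+a$ with $a\in\M_{n}(A)$ such that, for each $g$, the unitary $\alpha_{g}(u)=1+\alpha_{g}(a)$ is homotopic to $u$ in $\U(\M_{n}(\widetilde A))$. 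Using Lemma~\ref{lem: Rokhlin equivalence}(iii) for the action $\alpha\otimes\id_{\M_{n}}$ (which has the Rokhlin property by Proposition~\ref{Rp properties}(i)), choose orthogonal positive contractions $(r_{g})_{g\in G}$ in $(\M_{n}(A))^{\infty}$ with $\alpha_{g}(r_{h})=r_{gh}$, commuting with an $\alpha$-invariant separable subalgebra $B\subseteq \M_{n}(A)$ containing $a$ and implementing homotopies, and acting as a unit on $B$; set $v=1+\sum_{g\in G}\alpha_{g}(a)r_{g}$. A computation identical to the one in the proof of Proposition~\ref{prop: uniqueness} (using $a+a^{\ast}+a^{\ast}a=0$ and $r_{g}^{2}b=r_{g}b$ for $b\in B$) shows that $v$ is a unitary; equivariance of the $r_{g}$ gives $\alpha_{g}(v)=v$, so $v\in (\M_{n}(\widetilde{A^{\alpha}}))^{\infty}$; and substituting the implementing homotopies for $a$ produces a path of unitaries in $(\M_{n}(\widetilde A))^{\infty}$ joining $u$ to $v$. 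Lifting $v$ to a sequence of unitaries in $\U(\M_{n}(\widetilde{A^{\alpha}}))$ as in Lemma~\ref{lem: unitaries can be lifted} then yields a class in $\mathrm{K}_{1}(A^{\alpha})$ whose image under $\mathrm{K}_{1}(i)$ is $[u]=x$. Injectivity is symmetric: given $u\in\U(\M_{n}(\widetilde{A^{\alpha}}))$ with $i(u)$ null-homotopic in $\U(\M_{m}(\widetilde A))$, twist the null-homotopy by Rokhlin elements into a $G$-fixed null-homotopy of $i(u)$, which descends to a null-homotopy of $u$ over $A^{\alpha}$.

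The main obstacle is the same bookkeeping that already appears in the proofs of Section~3: passing from homotopies in the sequence algebra to genuine homotopies of unitaries in $\M_{n}(\widetilde A)$ and $\M_{n}(\widetilde{A^{\alpha}})$. Concretely, one fixes a representing sequence of the Rokhlin elements, notes that for each index it turns the sequence-algebra path into a path of almost-unitaries joining an element close to $u$ to one close to a genuine unitary representing the lifted class, and invokes that unitaries close to a given one lie in its homotopy class to conclude that the equality of $\mathrm{K}_{1}$-classes holds for all large indices. This is routine but must be carried out with care; once done, it also recovers and extends to the non-unital setting Izumi's computation of $\mathrm{K}_{\ast}(A^{\alpha})$ for unital $A$ in \cite{Izumi-I}.
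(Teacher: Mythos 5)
Your treatment of $\mathrm V$ and $\mathrm K_0$ --- Morita equivalence via \cite[Theorem 2.8]{Phillips-Freeness-of-actions} plus Theorem \ref{thm: image of inclusion on K-theory} --- is exactly the paper's argument. For the $\mathrm K_1$ (hence $\mathrm K_*$) part you take a genuinely different route. The paper avoids unitaries altogether: it sets $B=A\otimes C(S^1)$ with the diagonal action $\beta=\alpha\otimes\id_{C(S^1)}$, which still has the Rokhlin property by Proposition \ref{Rp properties} and still satisfies the standing hypotheses, notes that $B\rtimes_\beta G\cong (A\rtimes_\alpha G)\otimes C(S^1)$, and uses the K\"unneth formula to identify $\mathrm K_0(B)\cong\mathrm K_*(A)$ and $\mathrm K_0(B\rtimes_\beta G)\cong\mathrm K_*(A\rtimes_\alpha G)$ compatibly with the group actions, so that the already-established $\mathrm K_0$ statement applied to $(B,\beta)$ delivers the $\mathrm K_*$ statement for $(A,\alpha)$ in three lines. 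Your direct argument --- averaging a unitary $1+a$ against Rokhlin elements to $1+\sum_{g}\alpha_g(a)r_g$, checking unitarity from $a+a^*+a^*a=0$ together with $r_g^2b=r_gb$ for $b$ in the chosen separable invariant subalgebra, twisting the implementing homotopies in the same way, and discretizing paths in the sequence algebra into finite chains of unitaries at mutual distance less than $1$ for large indices --- is sound; it is the $\mathrm K_1$-analogue of the computation in the proof of Theorem \ref{thm: Rokhlin Constraint} and of the unitary trick in Proposition \ref{prop: uniqueness}, and the averaging needed to realize a fixed element of the sequence algebra by a sequence in $\M_m(\widetilde{A^\alpha})$ works because $G$ is finite. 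What your route buys is a self-contained statement that the inclusion itself induces an injection $\mathrm K_1(A^\alpha)\to\mathrm K_1(A)$ onto the fixed points, i.e., a genuine $\mathrm K_1$-counterpart of Theorem \ref{thm: image of inclusion on K-theory}, at the cost of the homotopy bookkeeping you flag; what the paper's route buys is brevity, at the cost of invoking the K\"unneth formula and of checking that $A\otimes C(S^1)$ inherits stable finiteness and an approximate identity of projections. Both arguments are correct.
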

\begin{proof} Recall that if $\alpha$ has the Rokhlin property, then the fixed point algebra $A^\alpha$ and the crossed product $A\rtimes_\alpha G$ are Morita equivalent, and hence have isomorphic $\mathrm K$-theory and Murray-von Neumann semigroup. The isomorphisms for $\mathrm V(A\rtimes_\alpha G)$ and $\mathrm K_0(A\rtimes_\alpha G)$ then follow from Theorem \ref{thm: image of inclusion on K-theory} above.

Denote $B=A\otimes C(S^1)$ and give $B$ the diagonal action $\beta=\alpha\otimes\id_{C(S^1)}$ of $G$. Note that $B$ is stably finite and has an approximate identity
consisting of projections, and that $\beta$ has the Rokhlin property by part (i) of Proposition \ref{Rp properties}. Moreover, there is a natural isomorphism $B\rtimes_\beta G\cong (A\rtimes_\alpha G) \otimes C(S^1)$. Applying the K\"unneth formula in the first step, together with the conclusion of this proposition for $\mathrm K_0$ (which was shown to hold in the paragraph above) in the second step, and again the K\"unneth formula in the fourth step, we obtain
\begin{align*}
\left\{x\in \mathrm K_\ast(A)\colon \mathrm K_\ast(\alpha_g)(x)=x, \ \forall \ g\in G\right\}&\cong \left\{x\in \mathrm K_0(B)\colon \mathrm K_0(\beta_g)(x)=x, \ \forall \ g\in G\right\}\\
&\cong \mathrm K_0(B\rtimes_\beta G)\\
&\cong \mathrm K_0((A\rtimes_\alpha G)\otimes C(S^1))\\
&\cong \mathrm K_\ast(A\rtimes_\alpha G),\end{align*}
as desired.
\end{proof}

\section{Equivariant UHF-absorption}
In this section, we study absorption of UHF-algebras in relation to the Rokhlin property. We show that for a certain class of C*-algebras, absorption of a UHF-algebra of infinite type is equivalent to existence of an action with the Rokhlin property that is pointwise approximately inner. (The cardinality of the group is related to the type of the UHF-algebra.) Moreover, in this case, not only the C*-algebra absorbs the corresponding UHF-algebra, but also the action in question absorbs the model action constructed in Example \ref{eg: model action}. Thus, Rokhlin actions allow us to prove that certain algebras are \emph{equivariantly} UHF-absorbing.

\subsection{Unique $n$-divisibility.}

The goal of this section is to show that for certain C*-algebras, absorption of the UHF-algebra of type $n^\infty$ is equivalent to its Cuntz semigroup being $n$-divisible. Along the way, we show that for a C*-algebra $A$, the Cuntz semigroups of $A$ and of $A\otimes \M_{n^\infty}$ are isomorphic if and only if $\Cu(A)$ is uniquely $n$-divisible.

We point out that some of the results of this section, particularly Theorem~\ref{Ctz smgp and unique n divis},
were independently obtained in the recent preprint \cite{APT}, as applications of their theory of tensor
products of Cuntz semigroups. On the other hand, the proofs we give here are direct and elementary. 
Additionally, our techniques also apply to other functors, for instance the functor $\Cu^\sim$.

We begin defining the main notion of this section. Recall that if $S$ and $T$ are ordered semigroup and $\varphi\colon S\to T$ is a semigroup homomorphism, we say that $\varphi$ is an \emph{order embedding} if $\varphi(s)\leq \varphi(s')$ implies $s\leq s'$ for all $s, s'\in S$. A semigroup isomorphism is called an \emph{order preserving} semigroup isomorphism if it is an order embedding.

\begin{definition}\label{df: uniquely n div}
Let $S$ be an ordered semigroup and let $n$ be a positive integer.
\begin{enumerate}
\item We say that $S$ is \emph{$n$-divisible}, if for every $x$ in $S$ there exists $y$ in $S$ such that $x=ny$.
\item We say that $G$ is \emph{uniquely $n$-divisible}, if multiplication by $n$ on $S$ is an order preserving semigroup isomorphism.
\end{enumerate}
\end{definition}

Recall that the category $\CCu$ is closed under sequential inductive limits.

\begin{lemma}\label{lem: n-div-inductivelim}
Let $n\in \N$ and let $S$ be a semigroup in the category $\CCu$. Denote by $\rho\colon S\to S$ the map given by $\rho(s)=ns$ for
all $s\in S$. Let $T$ be the semigroup in $\CCu$ obtained as the inductive limit
of the sequence
\begin{align*}
\xymatrix{
S\ar[r]^-{\rho} &S\ar[r]^-{\rho} & S\ar[r]^-{\rho} & \cdots.
}
\end{align*}
Then $T$ is uniquely $n$-divisible.
\end{lemma}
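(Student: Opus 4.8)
The plan is to realize multiplication by $n$ on $T$ as an \emph{invertible} morphism in the category $\CCu$, by producing its inverse directly from the universal property of the inductive limit. First I would record that $\rho$ is genuinely a morphism in $\CCu$: it fixes $0$ and preserves the order trivially, it preserves suprema of increasing sequences by iterating axiom (O3), and it preserves the relation $\ll$ by iterating (O4); in particular the inductive system and its limit $T$ live in $\CCu$. Write $\varphi_{k,\infty}\colon S\to T$ for the canonical morphisms, so that $\varphi_{k+1,\infty}\circ\rho=\varphi_{k,\infty}$ for all $k\in\N$, and let $\mu\colon T\to T$ denote multiplication by $n$, which is again a $\CCu$-morphism by the same argument as for $\rho$. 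Since each $\varphi_{k,\infty}$ is additive we have $\mu\circ\varphi_{k,\infty}=\varphi_{k,\infty}\circ\rho$, and combining this with the connecting relation gives
$$\mu\circ\varphi_{k+1,\infty}=\varphi_{k,\infty}\qquad\text{for all }k\in\N.$$

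Next I would construct the candidate inverse. The family $(\varphi_{k+1,\infty})_{k\in\N}$ of morphisms $S\to T$ forms a cocone over the same system, since the required compatibility $\varphi_{k+2,\infty}\circ\rho=\varphi_{k+1,\infty}$ is just the connecting relation with the index shifted. As $\CCu$ is closed under sequential inductive limits, the universal property of $T=\varinjlim(S,\rho)$ yields a unique $\CCu$-morphism $\Sigma\colon T\to T$ with $\Sigma\circ\varphi_{k,\infty}=\varphi_{k+1,\infty}$ for all $k$. I would then check that $\mu$ and $\Sigma$ are mutually inverse. On the image of each $\varphi_{k,\infty}$ one computes
$$\mu\circ\Sigma\circ\varphi_{k,\infty}=\mu\circ\varphi_{k+1,\infty}=\varphi_{k,\infty},\qquad
\Sigma\circ\mu\circ\varphi_{k,\infty}=\Sigma\circ\varphi_{k,\infty}\circ\rho=\varphi_{k+1,\infty}\circ\rho=\varphi_{k,\infty}.$$
Thus $\mu\circ\Sigma$ and $\Sigma\circ\mu$ agree with $\id_T$ on $\bigcup_k\mathrm{Im}(\varphi_{k,\infty})$. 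By Proposition \ref{prop: inductivelimitCu}(i), every element of $T$ is the supremum of an increasing sequence of elements of the form $\varphi_{k,\infty}(s_k)$, and since $\mu\circ\Sigma$, $\Sigma\circ\mu$ and $\id_T$ are all $\CCu$-morphisms and hence preserve such suprema, it follows that $\mu\circ\Sigma=\Sigma\circ\mu=\id_T$.

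Finally I would conclude. The morphism $\mu$ is then a bijective semigroup homomorphism of $T$ whose inverse $\Sigma$ is a $\CCu$-morphism and in particular order preserving; hence $\mu(s)\le\mu(t)$ implies $s=\Sigma(\mu(s))\le\Sigma(\mu(t))=t$, so $\mu$ is an order-preserving semigroup isomorphism. By Definition \ref{df: uniquely n div} this says precisely that $T$ is uniquely $n$-divisible. The only step that requires a little care is the passage from ``$\mu\circ\Sigma$ and $\id_T$ agree on $\bigcup_k\mathrm{Im}(\varphi_{k,\infty})$'' to ``they agree on all of $T$'', which rests on the explicit description of inductive limits in $\CCu$ recalled in Proposition \ref{prop: inductivelimitCu}; everything else is a routine application of (O3), (O4) and the universal property. (Alternatively one can argue the two halves separately: $n$-divisibility of $T$ by lifting a rapidly increasing sequence representing a given $t\in T$ through the connecting maps and taking a supremum, and the order-embedding property from Proposition \ref{prop: inductivelimitCu}(ii); but introducing the shift $\Sigma$ makes both statements simultaneous and transparent.)
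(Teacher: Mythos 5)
Your proof is correct, but it takes a genuinely different route from the paper. The paper argues element by element: it takes $s,t\in T$ with $ns\le nt$, represents them via Proposition \ref{prop: inductivelimitCu}(i) by sequences $(s_k)$, $(t_k)$ with $\rho(s_k)\ll s_{k+1}$, pushes the inequality down to a finite stage using Proposition \ref{prop: inductivelimitCu}(ii), cancels the factor $n$ there by hand (exploiting that $\rho_{k-1,m}(ns_{k-1})=n\rho_{k-1,m}(s_{k-1})$ and that an extra application of a connecting map multiplies by $n$), and then proves $n$-divisibility separately by observing $\rho_{k,\infty}(t_k)=n\,\rho_{k+1,\infty}(t_k)$ and taking a supremum. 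You instead realize multiplication by $n$ on $T$ as an invertible $\CCu$-morphism by constructing the shift $\Sigma$ from the universal property of the inductive limit applied to the reindexed cocone $(\varphi_{k+1,\infty})_k$; the two composites $\mu\circ\Sigma$ and $\Sigma\circ\mu$ restrict to the identity on each $\mathrm{Im}(\varphi_{k,\infty})$ and hence equal $\id_T$ (either by uniqueness in the universal property or, as you do, by supremum-density of $\bigcup_k\mathrm{Im}(\varphi_{k,\infty})$ plus sup-preservation). Both halves of unique $n$-divisibility then drop out simultaneously. Your argument is shorter and more conceptual; its only extra inputs are the routine verifications that $s\mapsto ns$ is a $\CCu$-morphism on $S$ and on $T$ (iterated (O3) and (O4)) and the identity $\mu\circ\varphi_{k,\infty}=\varphi_{k,\infty}\circ\rho$ from additivity, all of which you supply. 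The paper's computation is more pedestrian but has the minor virtue of never invoking the universal property beyond the concrete description in Proposition \ref{prop: inductivelimitCu}, which is also the template reused in Lemma \ref{lem: cancellation}.
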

\begin{proof}
Let $S$ and $T$ be as in the statement. To avoid any confusion with the notation, we will denote the map between the $k$-th and
$(k+1)$-st copies of $S$ by $\rho_k$, so we write $T$ as the direct limit
\begin{align*}
\xymatrix{
S\ar[r]^-{\rho_1} &S\ar[r]^-{\rho_2} & S\ar[r]^-{\rho_3} &\ar[r] \cdots & T.
}
\end{align*}
For $k,m\in\N$ with $m>k$, we let $\rho_{k,m}\colon S\to S$ denote the composition $\rho_{m-1}\circ\rho_{m-2}\circ\cdots\circ\rho_k$, and we let
$\rho_{k,\infty}\colon S\to T$ denote the canonical map from the $k$-th copy of $S$ to $T$.

Let $s, t\in T$ satisfy $ns\le nt$. By part (i) of Proposition \ref{prop: inductivelimitCu}, there exist sequences $(s_k)_{k\in\N}$ and
$(t_k)_{k\in \N}$ in $S$ such that
\begin{align*}
\rho_k(s_k)\ll s_{k+1} \mbox{ for all } k\in\N \ \ &\mbox{ and } \ \ s=\sup\limits_{k\in\N} \rho_{k,\infty}(s_k)\\
\rho_k(t_k)\ll t_{k+1} \mbox{ for all } k\in\N \ \ &\mbox{ and } \ \ t=\sup\limits_{k\in\N} \rho_{k,\infty}(t_k).
\end{align*}
It follows that $\rho_{k,\infty}(s_k)\ll \rho_{k+1,\infty}(s_{k+1})$ and $\rho_{k,\infty}(t_k)\ll \rho_{k+1,\infty}(t_{k+1})$ for all $k\in \N$.

Let $k\geq 2$ be fixed. Since
$$\rho_{k,\infty}(n s_k)\ll n s\le n t=\sup\limits_{k\in\N} \rho_{k,\infty}(n t_k),$$
there exists $l\in \N$ such that $\rho_{k,\infty}(n s_k)\le \rho_{l,\infty}(n t_l)$. Use part (ii) of Proposition
\ref{prop: inductivelimitCu} and $\rho_{j-1}(s_{j-1})\ll s_j$ for all $j\in \N$, to choose $m\ge k,l$ such that
$\rho_{k-1,m}(n s_{k-1})\le \rho_{l,m}(n t_l)$. Therefore,
\begin{align*}
\rho_{k-1,\infty}(s_{k-1})&=\rho_{m+1,\infty}(\rho_{m+1}(\rho_{k-1,m}(s_{k-1})))\\
&=\rho_{m+1,\infty}(n\rho_{k-1,m}(s_{k-1}))\\
&=\rho_{m+1,\infty}(\rho_{k-1,m}(ns_{k-1}))\\
&\le \rho_{m+1,\infty}(\rho_{l,m}(n t_l))\\
&=\rho_{m+1,\infty}(n\rho_{l,m}(t_l))\\
&=\rho_{m+1,\infty}(\rho_{m+1}(\rho_{l,m}(t_l)))\\
&=\rho_{l,\infty}(t_l)\\
&\le t,
\end{align*}
this is, $\rho_{k-1,\infty}(s_{k-1})\le t$. Since this holds for all $k\ge 2$, we conclude that
$$s=\sup\limits_{k\geq 2} \rho_{k-1,\infty}(s_{k-1})\le t.$$
We have shown that $ns\le nt$ in $T$ implies $s\le t$. In other words, multiplication by $n$ on $T$ is an order embedding, as desired.

To conclude the proof, let us show that $T$ is $n$-divisible. Fix $t\in T$ and choose a sequence $(t_k)_{k\in\N}$ in
$T$ satisfying
$$\rho_k(t_k)\ll t_{k+1} \mbox{ for all } k\in\N \ \ \mbox{ and } \ \ t=\sup\limits_{k\in\N} \rho_{k,\infty}(t_k).$$
For each $k\in\N$ we have
\[
\rho_{k,k+2}(t_k)=n^2t_k=n\rho_{k+1,k+2}(t_k).
\]
With $x_k=\rho_{k+1,\infty}(t_k)$, it follows that $\rho_{k,\infty}(t_k)= nx_k$. Since $(\rho_{k,\infty}(t_k))_{k\in\N}$ is an increasing
sequence in $T$, we deduce that $(nx_k)_{k\in\N}$ is an increasing sequence in $T$ as well. Since we have shown in the first part of this
proof that multiplication by $n$ on $T$ is an order
embedding, we conclude that $(x_k)_{k\in\N}$ is also increasing. With $x$ denoting the supremum of $(x_k)_{k\in\N}$, we have
\[
t=\sup\limits_{k\in\N} \rho_{k,\infty}(t_k)=\sup nx_k=n\sup\limits_{k\in\N} x_k=nx,
\]
which completes the proof.
\end{proof}


We point out that the functor $\Cu^\sim$ does not distinguish between *-homomorphisms that are approximately unitarily equivalent (with unitaries taken in the unitization). On the other hand, the corresponding statement for approximate unitary equivalence with unitaries taken in the multiplier algebra is not known in general.
The following proposition, of independent interest, implies that this is the case whenever the codomain has stable rank one. This will be used in the proof of Lemma~\ref{lem: n} to
deduce that certain *-homomorphisms are trivial at the level of $\Cu^\sim$.

\begin{proposition}\label{prop: multiplierau}
Let $A$ and $B$ be C*-algebras with $B$ stable, and let $\phi, \psi\colon A\to B$ be *-homomorphisms. Suppose that $\phi$ and $\psi$ are
approximately unitarily equivalent with unitaries taken in the multiplier algebra of $B$. Then $\phi$ and $\psi$ are approximately unitarily
equivalent with unitaries taken in the unitization of $B$.
\end{proposition}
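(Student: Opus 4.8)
The plan is to reduce the statement, which compares unitaries in $\M(B)$ with unitaries in $\widetilde B$, to a statement about the \emph{connected component of the identity}. The key point is that approximate unitary equivalence is a closed equivalence relation, so it suffices to approximate a single unitary $u\in\M(B)$, on a fixed finite subset of $\phi(A)$ and up to a fixed $\varepsilon>0$, by a unitary coming from $\widetilde B$. Since $B$ is stable, say $B\cong B\otimes\K$, we may use the standard fact that $\U(\M(B))$ is then \emph{connected} (indeed, $\M(B\otimes\K)$ has $\U_0=\U$ because of the infinite repeat / Eilenberg swindle available in $B\otimes\K$; this is a well-known consequence of Kuiper-type arguments, e.g.\ Mingo's theorem, and could be quoted). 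Thus any $u\in\U(\M(B))$ can be written as a finite product $u=\exp(ia_1)\cdots\exp(ia_m)$ with $a_j=a_j^*\in\M(B)$.

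With $u=\exp(ia_1)\cdots\exp(ia_m)$ in hand, I would proceed as follows. Fix a finite set $F\subseteq A$ and $\varepsilon>0$. Using an approximate identity $(e_\lambda)$ for $B$ (which is quasicentral in $\M(B)$), replace each self-adjoint $a_j$ by $e_\lambda^{1/2} a_j e_\lambda^{1/2}\in B$ for $\lambda$ large; the resulting unitary $v=\prod_j\exp(i\, e_\lambda^{1/2}a_j e_\lambda^{1/2})$ lies in $\widetilde B$ (in fact $v-1\in B$), and by quasicentrality $v$ can be made to satisfy $\|u\,\phi(a)\,u^*-v\,\phi(a)\,v^*\|<\varepsilon$ for all $a\in F$, once one knows $\|[\,\phi(a),\,(1-e_\lambda)\,]\|\to 0$, which follows from $\phi(a)\in B$ and $e_\lambda$ being an approximate unit of $B$. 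Since by hypothesis $\|\phi(a)-u^*\psi(a)u\|$ can be made arbitrarily small, combining the two estimates gives $\|\phi(a)-v^*\psi(a)v\|<2\varepsilon$ for all $a\in F$ with $v\in\U(\widetilde B)$. As $F$ and $\varepsilon$ were arbitrary, $\phi\sim_{\mathrm{au}}\psi$ with unitaries in $\widetilde B$.

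The technical core — and the step I expect to be the main obstacle — is the factorization of an arbitrary $u\in\U(\M(B))$ into exponentials, equivalently the connectedness (and exponential length control) of $\U(\M(B))$ for $B$ stable. This is where stability of $B$ is genuinely used: for a general $B$ the unitary group of $\M(B)$ need not be connected (e.g.\ $B=\mathcal O_2$, or $B$ unital), and the proposition can fail. I would either cite the Mingo–Kuiper theorem for $\M(B\otimes\K)$ directly, or, if a self-contained argument is wanted, give the Eilenberg-swindle construction: embed $u$ into $u\oplus\bar u\oplus u\oplus\bar u\oplus\cdots$ inside a corner of $\M(B\otimes\K)\cong\M(B)$ and use that $u\oplus u^*$ is homotopic to $1$ via rotation matrices, a homotopy with uniformly bounded exponential length, to realize $u$ itself as a bounded product of exponentials. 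Everything else — the quasicentral approximate unit manipulations and the triangle-inequality bookkeeping — is routine and I would not belabor it in the write-up.
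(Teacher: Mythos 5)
Your argument is correct in its main thrust, but it takes a genuinely different and noticeably heavier route than the paper. The paper's proof avoids Kuiper-type theorems altogether: given a finite set $F\subseteq A$, it takes representing sequences $(u_n)$ of unitaries in $\M(B)$ and a sequence $(s_n)$ of positive contractions in $B$ acting asymptotically as a unit on $\psi(F)\cup\phi(F)$, notes that $u_ns_n\in B$ with $|u_ns_n|=s_n$, and then uses the fact that a stable C*-algebra satisfies $B\subseteq\overline{\mathrm{GL}(\widetilde B)}$ (Lemma 4.3.2 of \cite{B-R-T-T-W}), hence admits approximate polar decompositions with unitaries in $\widetilde B$, to produce unitaries $v_n\in \U(\widetilde B)$ with $\|u_ns_n-v_ns_n\|\to 0$; the identity $\phi(a)=us\,\psi(a)\,su^*=v\psi(a)v^*$ in the sequence algebra then finishes the proof. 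So for the authors stability enters through almost stable rank one, not through the topology of $\U(\M(B))$. Your route --- factor $u$ as a finite product of exponentials using connectedness of $\U(\M(B))$, then compress the exponents into $B$ --- does work: $e_\lambda^{1/2}a_je_\lambda^{1/2}\to a_j$ strictly and boundedly, so the compressed product of exponentials converges to $u$ strictly, whence $\Ad(v)\to\Ad(u)$ pointwise in norm on $B$ (quasicentrality is not actually needed here, only that $(e_\lambda)$ is an approximate unit for $B$). What your approach buys is conceptual transparency and exponential-length control; what it costs is the appeal to Mingo/Cuntz--Higson.

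That cost is not purely aesthetic: the standard statements of Mingo's theorem and of Kuiper's theorem for Hilbert modules (connectedness, indeed contractibility, of $\U(\M(B\otimes\K))\cong\U(\mathcal{L}(H_B))$) are proved for $\sigma$-unital coefficient algebras, whereas the proposition is stated for arbitrary stable $B$ and is later applied (via Lemma \ref{lem: n} and Theorem \ref{Ctz smgp and unique n divis}) to $\M_{n^k}(A)$ for an arbitrary stable $A$. As written, your proof therefore establishes a slightly weaker statement unless you supply a version of the Kuiper-type theorem without $\sigma$-unitality; the paper's polar-decomposition argument has no such restriction. One small slip in your aside: $\U(\mathcal O_2)$ is connected ($\mathcal O_2$ is properly infinite with trivial $\mathrm K_1$), so it is not an example of a multiplier unitary group failing to be connected; a unital algebra with nontrivial $\mathrm K_1$ makes the intended point.
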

\begin{proof}
Denote by $\iota\colon B\to \M(B)^\infty$ the canonical inclusion as constant sequences. We will identify $B$ with a
subalgebra of $\M(B)$, and suppress $\iota$ from the notation. Hence we will denote the maps
$\iota\circ\phi,\iota\circ\psi\colon A\to \M(B)^\infty$ again by $\phi$ and $\psi$, respectively.

Let $F\subseteq A$ be a finite set. Then there exists a unitary $u=\pi_{\M(B)}((u_n)_{n\in\N})$ in $\mathrm{M}(B)^\infty$ such that
$\phi(a)=u\psi(a)u^*$ for all $a\in F$. Choose a sequence $(s_n)_{n\in \N}$ of positive contractions in $B$ such that
$$\lim\limits_{n\to \infty} s_n\psi(a)=\lim\limits_{n\to \infty} \psi(a)s_n=\psi(a)$$
for all $a\in F$. Let $s=\pi_{\mathrm M(B)}((s_n)_{n\in\N})$ denote the image of $(s_n)_{n\in\N}$ in $B^\infty\subseteq \mathrm M(B)^\infty $. Then
$$s\phi(a)=\phi(a)s=\phi(a)$$
for all $a\in F$. Since $B$ is stable, we have $B\subseteq \overline{\mathrm{GL}(\widetilde B)}$ by \cite[Lemma 4.3.2]{B-R-T-T-W}. Hence, elements in $B$ have
approximate polar decompositions with unitaries taken in $\widetilde B$. This implies that there exists a sequence $(v_n)_{n\in \N}$
of unitaries in $\widetilde B$ such that $\lim\limits_{n\to \infty}\|u_ns_n-v_ns_n\|=0$. Let $v=\pi_{\widetilde{B}}((v_n)_{n\in \N})$ denote the image
of $(v_n)_{n\in \N}$ in $(\widetilde{B})^\infty$. Then $us=vs$ and
$$\phi(a)=u\psi(a)u^*=us\psi(a)su^*=vs\psi(a)sv^*=v\psi(a)v^*$$
for all $a\in F$. This implies that $\lim\limits_{n\to \infty}\|\phi(a)-v_n\psi(a)v_n^*\|=0$ for all $a\in F$.
Since $v_n$ is a unitary in $\widetilde{B}$ for all $n\in\N$, we conclude that $\phi$ and $\psi$ are approximately unitarily equivalent
with unitaries taken in the unitization of $B$.
\end{proof}

Let $n,k\in \N$. We let $\left(f_{i,j}^{(n^k)}\right)_{i,j=0}^{n^k-1}$ denote the set of matrix units of $\M_{n^k}(\C)$. Recall that
if $A$ and $B$ are C*-algebras and $\phi,\psi\colon A\to B$ are *-homomorphisms with orthogonal ranges, then $\phi+\psi$ is also
a *-homomorphism and $\Cu(\phi+\psi)=\Cu(\phi)+\Cu(\psi)$.
\begin{lemma}\label{lem: n}
Let $A$ be a C*-algebra and let $n, k\in \N$. Let $\iota_k\colon A\to \M_{n^k}(A)$ be the map given by $\iota_k(a)=a\otimes f^{(n^k)}_{0,0}$ for all $a\in A$, and let $j_k\colon \M_{n^k}(A)\to \M_{n^{k+1}}(A)$ be the map given by $j_k(a)=a\otimes 1_n$ for all $a\in \M_{n^k}(A)$.
Then the map
\begin{align*}
\Cu(\iota_{k+1})^{-1}\circ \Cu(j_k)\circ \Cu(\iota_k)\colon \Cu(A)\to \Cu(A),
\end{align*}
is the map multiplication by $n$.
\end{lemma}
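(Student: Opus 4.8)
The plan is to prove the stronger identity $\Cu(j_k)\circ\Cu(\iota_k)=n\cdot\Cu(\iota_{k+1})$ as maps $\Cu(A)\to\Cu(\M_{n^{k+1}}(A))$, and then compose with $\Cu(\iota_{k+1})^{-1}$.

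First I would unwind $j_k\circ\iota_k\colon A\to\M_{n^{k+1}}(A)$. Viewing $\M_{n^{k+1}}(A)=A\otimes\M_{n^k}(\C)\otimes\M_n(\C)$, one has $(j_k\circ\iota_k)(a)=a\otimes\big(f^{(n^k)}_{0,0}\otimes 1_n\big)$, and $q:=f^{(n^k)}_{0,0}\otimes 1_n$ is a projection of rank $n$ in $\M_{n^{k+1}}(\C)$. I would write $q=\sum_{l=1}^{n}e_l$ as a sum of $n$ pairwise orthogonal rank-one projections of $\M_{n^{k+1}}(\C)$; then $j_k\circ\iota_k=\sum_{l=1}^n\kappa_l$, where $\kappa_l\colon A\to\M_{n^{k+1}}(A)$ is given by $\kappa_l(a)=a\otimes e_l$ and the $\kappa_l$ have pairwise orthogonal ranges. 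Each $e_l$ is Murray--von Neumann equivalent in $\M_{n^{k+1}}(\C)$ to $f^{(n^{k+1})}_{0,0}$, so there is a unitary $v_l\in\M_{n^{k+1}}(\C)$ with $v_l f^{(n^{k+1})}_{0,0}v_l^*=e_l$, and hence $\kappa_l=\Ad(v_l)\circ\iota_{k+1}$. Regarding $v_l$ as a unitary in the multiplier algebra of $\M_{n^{k+1}}(A)$ and using that conjugation by a multiplier unitary induces the identity on the Cuntz semigroup, one gets $\Cu(\kappa_l)=\Cu(\iota_{k+1})$ for every $l$.

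Then, by the additivity of $\Cu$ under $*$-homomorphisms with orthogonal ranges (recalled just before the lemma),
\[
\Cu(j_k)\circ\Cu(\iota_k)=\Cu(j_k\circ\iota_k)=\Cu\Big(\sum_{l=1}^{n}\kappa_l\Big)=\sum_{l=1}^{n}\Cu(\kappa_l)=n\cdot\Cu(\iota_{k+1}).
\]
The map $\Cu(\iota_{k+1})$ is an isomorphism in $\CCu$ — indeed $\iota_{k+1}\otimes\id_\K$ carries $A\otimes\K$ onto the full hereditary subalgebra $A\otimes f^{(n^{k+1})}_{0,0}\otimes\K$ of $\M_{n^{k+1}}(A)\otimes\K$, so invertibility follows from the Morita invariance of $\Cu$ \cite{Coward-Elliott-Ivanescu} (and is in any case implicit in the statement) — and being a morphism in $\CCu$ it is in particular additive. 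Therefore, for every $x\in\Cu(A)$,
\[
\big(\Cu(\iota_{k+1})^{-1}\circ\Cu(j_k)\circ\Cu(\iota_k)\big)(x)=\Cu(\iota_{k+1})^{-1}\big(n\cdot\Cu(\iota_{k+1})(x)\big)=\Cu(\iota_{k+1})^{-1}\big(\Cu(\iota_{k+1})(nx)\big)=nx,
\]
which is the assertion of the lemma.

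The argument is essentially matrix-unit bookkeeping, so I do not expect a serious obstacle; the two points that need (standard) care are the identification $j_k\circ\iota_k=\sum_l\kappa_l$ with each $\kappa_l$ conjugate to $\iota_{k+1}$, and the invariance of the induced $\Cu$-map under conjugation by a unitary in the multiplier algebra, which one checks by approximating such a unitary from within $\M_{n^{k+1}}(A)\otimes\K$ via an approximate identity and inspecting the defining inequality for Cuntz subequivalence.
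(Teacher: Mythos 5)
Your argument is correct, and its skeleton is the same as the paper's: decompose the relevant map into $n$ summands with pairwise orthogonal ranges, observe that the summands are mutually unitarily equivalent via scalar (hence multiplier) unitaries, and use additivity of $\Cu$ on orthogonal sums together with the invertibility of $\Cu(\iota_{k+1})$ coming from Morita invariance. (The paper decomposes $j_k=\sum_i j_{k,i}$ with $j_{k,i}(b)=b\otimes f^{(n)}_{i,i}$, whereas you decompose $j_k\circ\iota_k$ directly; this is cosmetic.) The one genuine divergence is how the unitary-conjugation step is justified. You invoke the fact that conjugation by a unitary $u$ in the multiplier algebra acts trivially on $\Cu$ -- which is true and elementary: for positive $c$ one has $ucu^*=xx^*$ and $c=x^*x$ with $x=uc^{1/2}$, so no approximation argument is even needed -- and this lets you skip both the reduction to $A$ stable and Proposition \ref{prop: multiplierau}. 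The paper instead passes through Proposition \ref{prop: multiplierau} (multiplier approximate unitary equivalence implies approximate unitary equivalence with unitaries in the unitization, for stable codomain) and the invariance of $\Cu$ under the latter. For $\Cu$ alone your shortcut is cleaner; the paper's detour is there because the authors want the same argument to apply verbatim to $\Cu^\sim$, for which invariance under conjugation by multiplier unitaries is not known, while invariance under approximate unitary equivalence with unitaries in the unitization is. So if you only care about the lemma as stated, your proof is a mild simplification; if you wanted the $\Cu^\sim$ analogue advertised in the surrounding text, you would need to reinstate the appeal to Proposition \ref{prop: multiplierau}.
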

\begin{proof}
Since $\Cu $ is invariant under stabilization, we may assume that the algebra $A$ is stable.

Fix $k$ in $\N$. For each $0\le i\le n-1$, let $j_{k,i}\colon \M_{n^k}(A)\to \M_{n^{k+1}(A)}$ be the map defined by
$j_{k,i}(b)=b\otimes f_{i,i}^{(n)}$
for all $b\in \M_{n^k}(A)$. Then the maps $(j_{k,i})_{i=0}^{n-1}$ have orthogonal ranges and $j_k=\sum\limits_{i=1}^{n-1} j_{k,i}$. By the comments
before this lemma, we have
$$\Cu(j_k)=\sum\limits_{i=0}^{n-1}\Cu(j_{k,i}).$$
Since $f_{i,i}^{(n)}$ and $f_{\ell,\ell}^{(n)}$ are unitarily equivalent in $\M_n(\C)$ for all $i,\ell=0,\ldots,n-1$, we conclude that the
maps $j_{k,i}$ and $j_{k,\ell}$ are unitarily equivalent with unitaries in the multiplier algebra of $\M_{n^{k+1}}(A)$. By Proposition
\ref{prop: multiplierau}, this implies that the maps $j_{k,i}$ and $j_{k,\ell}$ are approximately unitarily equivalent (with unitaries taken in the unitization of $\mathrm M_{n^{k+1}}(A)$). Since approximate
unitary equivalent maps yield the same morphism at the level of the Cuntz semigroup, we deduce that $\Cu(j_{k,i})=\Cu(j_{k,\ell})$ for all
$i,\ell=0,\ldots,n-1$.
Given a positive element $a$ in $A\otimes\K$,we have
\begin{align*}
(\Cu(\iota_{k+1})^{-1}\circ\Cu(j_k)\circ \Cu(\iota_k))([a])&=(\Cu(\iota_{k+1})^{-1}\circ\Cu(j_k))\left(\left[a\otimes f_{0,0}^{(n^k)}\right]\right)\\
&=\Cu(\iota_{k+1})^{-1}\left(\sum\limits_{i=0}^{n-1}\Cu(j_{k,i})\left(\left[a\otimes f_{0,0}^{(n^k)}\right]\right)\right)\\
&=\Cu(\iota_{k+1})^{-1}\left(n\Cu(j_{k,0})\left(\left[a\otimes f_{0,0}^{(n^k)}\right]\right)\right)\\
&=n\Cu(\iota_{k+1})^{-1}\left(\left[a\otimes f_{0,0}^{(n^k)}\otimes f_{0,0}^{(n)}\right]\right)\\
&=n\Cu(\iota_{k+1})^{-1}\left(\left[a\otimes f_{0,0}^{(n^{k+1})}\right]\right)\\
&=n [a]
\end{align*}
We conclude that $\Cu(\iota_{k+1})^{-1}\circ \Cu(j_k)\circ \Cu(\iota_k)$ is the map multiplication by $n$.
\end{proof}

\begin{theorem}\label{Ctz smgp and unique n divis}
Let $A$ be a C*-algebra and let $n\in\N$ with $n\ge 2$. Then $\Cu(A)$ is uniquely $n$-divisible if and only if $\Cu(A)\cong \Cu(A\otimes \M_{n^\infty})$ as order semigroups.
\end{theorem}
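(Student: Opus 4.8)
The plan is to prove both implications at once by first establishing the identification
\[
\Cu(A\otimes\M_{n^\infty})\;\cong\;\varinjlim\bigl(\Cu(A)\xrightarrow{\ \rho\ }\Cu(A)\xrightarrow{\ \rho\ }\Cu(A)\xrightarrow{\ \rho\ }\cdots\bigr),
\]
where $\rho\colon\Cu(A)\to\Cu(A)$ denotes multiplication by $n$, and then reading the two directions off Lemmas~\ref{lem: n} and~\ref{lem: n-div-inductivelim}. To get the display I would write the UHF-algebra as $\M_{n^\infty}=\varinjlim(\M_{n^k},\,a\mapsto a\otimes 1_n)$, so that $A\otimes\M_{n^\infty}=\varinjlim(\M_{n^k}(A),j_k)$ with $j_k$ the unital connecting map of Lemma~\ref{lem: n}; since $\Cu$ preserves inductive limits of sequences (see \cite{Coward-Elliott-Ivanescu}), this gives $\Cu(A\otimes\M_{n^\infty})=\varinjlim(\Cu(\M_{n^k}(A)),\Cu(j_k))$. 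Each $\Cu(\iota_k)\colon\Cu(A)\to\Cu(\M_{n^k}(A))$ is an isomorphism by invariance of $\Cu$ under matrix stabilization (this is already implicit in the statement of Lemma~\ref{lem: n}), and Lemma~\ref{lem: n} says precisely that $\Cu(j_k)\circ\Cu(\iota_k)=\Cu(\iota_{k+1})\circ\rho$; hence $(\Cu(\iota_k))_{k\in\N}$ is an isomorphism of inductive systems from $(\Cu(A),\rho)$ onto $(\Cu(\M_{n^k}(A)),\Cu(j_k))$, and passing to limits yields the display.

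Granting this, the implication that $\Cu(A)\cong\Cu(A\otimes\M_{n^\infty})$ forces unique $n$-divisibility is immediate: by Lemma~\ref{lem: n-div-inductivelim} the limit on the right is uniquely $n$-divisible, and unique $n$-divisibility is transported along any order-preserving semigroup isomorphism, so $\Cu(A)$ inherits it. For the converse, assume $\Cu(A)$ is uniquely $n$-divisible, so that $\rho$, and hence every composition of connecting maps in the system, is a bijective order embedding, while $\rho$ still preserves suprema of increasing sequences. Using Proposition~\ref{prop: inductivelimitCu} I would then check that the canonical map $\rho_{1,\infty}$ from the first copy of $\Cu(A)$ into the above inductive limit is an order-preserving semigroup isomorphism: the order-embedding property follows from part (ii) of that proposition together with the fact that the iterates of $\rho$ are order embeddings, and surjectivity follows from part (i) by pushing the approximating sequence back through the inverses of the iterates of $\rho$ and taking the supremum. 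Combining with the display gives $\Cu(A\otimes\M_{n^\infty})\cong\Cu(A)$.

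I do not expect a genuine obstacle, since the real content has already been discharged in Lemmas~\ref{lem: n} and~\ref{lem: n-div-inductivelim}; the one point deserving care is the bookkeeping in the first paragraph. In particular, knowing that $\Cu(\iota_k)$ is invertible — the invariance of $\Cu$ under matrix amplification — is exactly what upgrades Lemma~\ref{lem: n} from a mere relation between the two inductive systems into an honest isomorphism of systems, and hence into the displayed identification on which everything else rests.
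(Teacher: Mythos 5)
Your proposal is correct and follows essentially the same route as the paper's proof: both identify $\Cu(A\otimes\M_{n^\infty})$ with the inductive limit of the stationary system $(\Cu(A),\times n)$ via the decomposition $\M_{n^\infty}=\varinjlim\M_{n^k}$, continuity of $\Cu$, matrix-stabilization invariance, and Lemma~\ref{lem: n}, and then invoke Lemma~\ref{lem: n-div-inductivelim} for one direction and the triviality of a limit along order-isomorphisms for the other. The only difference is that you spell out, via Proposition~\ref{prop: inductivelimitCu}, why the canonical map into the limit is an isomorphism when the connecting maps are, a step the paper leaves implicit.
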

\begin{proof}
Assume that there exists an isomorphism $\Cu(A)\cong \Cu(A\otimes \M_{n^\infty})$ as ordered semigroups. Using
the inductive limit decomposition $\M_{n^\infty}=\varinjlim \M_{n^k}$ with connecting maps
$j_k\colon \M_{n^{k-1}}(A)\to \M_{n^{k}}(A)$ is given by
$j_k(a)=a\otimes 1_{n}$ for all $a\in \M_{n^{k-1}}(A)$, we can write $A\otimes \M_{n^\infty}$ as the inductive limit
\[
\xymatrix{
A\ar[r]^-{j_1} &\M_n(A)\ar[r]^-{j_2} & \M_{n^2}(A)\ar[r]^-{j_3} &\ar[r] \cdots & A\otimes\M_{n^\infty}.
}
\]
By continuity of the functor $\Cu$ (see \cite[Theorem 2]{Coward-Elliott-Ivanescu}),
the semigroup $\Cu(A\otimes \M_{n^\infty})$ is isomorphic to the inductive limit in the category $\CCu$ of the sequence
\begin{align}\label{A_UHF}
\xymatrix{
\Cu(A)\ar[r]^-{\Cu(j_0)} &\Cu(\M_n(A))\ar[r]^-{\Cu(j_1)} & \Cu(\M_{n^2}(A))\ar[r]^-{\Cu(j_2)} & \cdots.
}
\end{align}
By \cite[Appendix]{Coward-Elliott-Ivanescu}, the inclusion $i_{k}\colon A\to \M_{n^k}(A)$ from $A$ into the upper left corner
of $\M_{n^k}(A)$ induces an isomorphism between the Cuntz semigroup of $A$ and that of $\M_{n^k}(A)$. For $k\in \N$,
let $\varphi_k\colon \Cu(A)\to\Cu(A)$ be given by
$$\varphi_k=\Cu(i_{k+1})^{-1}\circ\Cu(j_{k})\circ\Cu(i_k).$$
The sequence \eqref{A_UHF} implies that $\Cu(A\otimes \M_{n^\infty})$ is the inductive limit of the sequence
\begin{align}\label{A_UHF_2}
\xymatrix{
\Cu(A)\ar[r]^-{\varphi_1} &\Cu(A)\ar[r]^-{\varphi_2} & \Cu(A)\ar[r]^-{\varphi_3} & \cdots.
}
\end{align}
By Lemma \ref{lem: n}, each $\varphi_k$ is the map multiplication by $n$. It follows from Lemma \ref{lem: n-div-inductivelim}
that $\Cu(A\otimes \M_{n^\infty})$ is uniquely $n$-divisible. This shows the ``if" implication.

Conversely, assume that $\Cu(A)$ is uniquely $n$-divisible and adopt the notation used above.
The map $\varphi_k$ is the map
multiplication by $n$ on $\Cu(A)$ by Lemma \ref{lem: n}, so it is an order-isomorphism by assumption.
By the inductive limit expression of $\Cu(A\otimes \M_{n^\infty})$ in \eqref{A_UHF_2}, we conclude that
$\Cu(A)\cong \Cu(A\otimes \M_{n^\infty})$, as desired.
\end{proof}

\begin{remark}
Let $\mathcal{Q}$ denote the universal UHF-algebra. Using the same ideas as in the proof of the previous theorem, one can show that $\Cu(A)\cong\Cu(A\otimes \mathcal{Q})$ if and only if $\Cu(A)$ is uniquely $p$-divisible for every prime number $p$.
\end{remark}

We now turn to direct limits of one-dimensional NCCW-complexes. The following lemma will allow us to reduce to the case where the
algebra itself is a one-dimensional NCCW-complex when proving that multiplication by $n$ is an order embedding at the level of the Cuntz semigroup.

\begin{lemma}\label{lem: cancellation}
Let $(S_k,\rho_k)_{k\in\N}$ be an inductive system in the category $\CCu$, and let $S=\varinjlim (S_k,\rho_k)$ be its inductive
limit in $\CCu$. Let $n\in \N$. If multiplication by $n$ on $S_k$ is an order embedding for all $k$ in $\N$, then the same holds for $S$.
\end{lemma}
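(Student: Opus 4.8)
The plan is to verify directly that multiplication by $n$ on $S$ is an order embedding, relying on the concrete description of inductive limits in $\CCu$ given in Proposition \ref{prop: inductivelimitCu}. Two preliminary observations will be used throughout. First, for any semigroup $S'$ in $\CCu$ the map $x\mapsto nx$ is an endomorphism of $S'$ in $\CCu$: it preserves the order and the zero element, it preserves suprema of increasing sequences by repeated use of (O3), and it preserves the relation $\ll$ by repeated use of (O4). Second, since every connecting map $\rho_k$ is additive, multiplication by $n$ commutes with each $\rho_k$, hence with every composition of connecting maps and with the canonical maps $\rho_{k,\infty}\colon S_k\to S$. For $k\le l$ I will write $\rho_{k,l}\colon S_k\to S_l$ for the corresponding composition of connecting maps (with $\rho_{k,k}=\mathrm{id}$), so that $\rho_{k,\infty}=\rho_{l,\infty}\circ\rho_{k,l}$.

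Now suppose $s,t\in S$ satisfy $ns\le nt$; the goal is $s\le t$. Using Proposition \ref{prop: inductivelimitCu}(i), first pick $s_k,t_k\in S_k$ with $\rho_k(s_k)\ll s_{k+1}$, $\rho_k(t_k)\ll t_{k+1}$, $s=\sup\limits_{k\in\N}\rho_{k,\infty}(s_k)$, and $t=\sup\limits_{k\in\N}\rho_{k,\infty}(t_k)$. Fix $k\ge 2$. From $\rho_k(s_k)\ll s_{k+1}$ and the fact that multiplication by $n$ preserves $\ll$ we get $\rho_k(ns_k)\ll ns_{k+1}$, and applying the $\CCu$-morphism $\rho_{k+1,\infty}$ gives $\rho_{k,\infty}(ns_k)\ll\rho_{k+1,\infty}(ns_{k+1})\le ns\le nt$; since multiplication by $n$ preserves suprema, $nt=\sup\limits_{m\in\N}\rho_{m,\infty}(nt_m)$. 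By the definition of $\ll$ there is therefore $l\ge k$ with $\rho_{k,\infty}(ns_k)\le\rho_{l,\infty}(nt_l)$, that is, $\rho_{l,\infty}(\rho_{k,l}(ns_k))\le\rho_{l,\infty}(nt_l)$. From $\rho_{k-1}(s_{k-1})\ll s_k$ (available since $k\ge 2$) we likewise get $\rho_{k-1}(ns_{k-1})\ll ns_k$, whence $\rho_{k-1,l}(ns_{k-1})\ll\rho_{k,l}(ns_k)$ after applying $\rho_{k,l}$. Now Proposition \ref{prop: inductivelimitCu}(ii) applies at stage $l$, to the element $\rho_{k,l}(ns_k)$, its compactly contained subelement $\rho_{k-1,l}(ns_{k-1})$, and $nt_l$: there is a stage $p\ge l$ with $\rho_{k-1,p}(ns_{k-1})\le\rho_{l,p}(nt_l)$ in $S_p$, i.e. $n\rho_{k-1,p}(s_{k-1})\le n\rho_{l,p}(t_l)$.

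At this point the hypothesis enters: multiplication by $n$ on $S_p$ is an order embedding, so one may cancel $n$ to obtain $\rho_{k-1,p}(s_{k-1})\le\rho_{l,p}(t_l)$, and applying $\rho_{p,\infty}$ yields $\rho_{k-1,\infty}(s_{k-1})\le\rho_{l,\infty}(t_l)\le t$. Since $k\ge 2$ is arbitrary, this gives $\rho_{j,\infty}(s_j)\le t$ for every $j\in\N$, and hence $s=\sup\limits_{j\in\N}\rho_{j,\infty}(s_j)\le t$, completing the argument. I expect the only genuine subtlety to be the bookkeeping needed to invoke Proposition \ref{prop: inductivelimitCu}(ii): that statement requires an element \emph{compactly contained} in $\rho_{k,l}(ns_k)$, which is exactly why one carries the previous term $s_{k-1}$ along and exploits the relation $\rho_{k-1}(s_{k-1})\ll s_k$ supplied by Proposition \ref{prop: inductivelimitCu}(i); beyond that, the proof is routine manipulation of suprema and the compact containment relation.
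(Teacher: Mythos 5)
Your proof is correct and follows essentially the same route as the paper's: lift $s$ and $t$ to rapidly increasing sequences via Proposition \ref{prop: inductivelimitCu}(i), push the inequality $ns\le nt$ down to a finite stage using part (ii) together with a compactly contained predecessor term, cancel $n$ there by hypothesis, and take suprema. The only (cosmetic) difference is that you carry the previous term $s_{k-1}$ while the paper carries the next term $s_{k+1}$ as the witness for the compact containment needed in part (ii); your bookkeeping when moving all elements to a common stage before invoking (ii) is in fact slightly more explicit than the paper's.
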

\begin{proof}
For $l\geq k$, denote by $\rho_{k,l}\colon S_k\to S_{l+1}$ the composition $\rho_{k,l}=\rho_l\circ\cdots\circ\rho_k$, and denote by
$\rho_{k,\infty}\colon S_k\to S$ the canonical map as in the definition of the inductive limit. Let $s,t\in S$ satisfy $ns\le nt$.
By part (i) of Proposition \ref{prop: inductivelimitCu}, for each $k\in\N$ there exist $s_k, t_k\in S_k$ such that
\begin{align*}
\rho_k(s_k)\ll s_{k+1} \ \ &\mbox{ and } \ \ s=\sup\limits_{k\in\N} \rho_{k,\infty}(s_k),\\
\rho_k(t_k)\ll t_{k+1} \ \ &\mbox{ and } \ \ t=\sup\limits_{k\in\N} \rho_{k,\infty}(t_k).
\end{align*}
Note in particular that $\rho_{k,\infty}(s_k)\ll\rho_{k+1,\infty}(s_{k+1})$ and $\rho_{k,\infty}(t_k)\ll\rho_{k+1,\infty}(t_{k+1})$ for all $k\in \N$.

Fix $k\in \N$. Then
$$\rho_{k,\infty}(ns_k)\ll\rho_{k+1,\infty}(ns_{k+1})\ll \sup\limits_{j\in\N}\rho_{j, \infty}(nt_j).$$
By the definition of the compact containment relation, there exists $j\in\N$ such that
$$\rho_{k,\infty}(ns_k)\ll\rho_{k+1,\infty}(ns_{k+1})\le\rho_{j, \infty}(nt_j).$$
By part (ii) of Proposition \ref{prop: inductivelimitCu}, there exists $l\in\N$ such that
$$n\rho_{k,l}(s_k)=\rho_{k,l}(ns_k)\le\rho_{j, l}(nt_j)=n\rho_{j, l}(t_j).$$
Using that multiplication by $n$ on $S_k$ is an order embedding, we obtain $\rho_{k,l}(s_k)\le \rho_{j, l}(t_j)$. In particular,
$$\rho_{k,\infty}(s_k)\le \rho_{j, \infty}(t_j)\le t.$$
Since $k\in\N$ is arbitrary and $s=\sup\limits_{k\in\N} \rho_{k,\infty}(s_k)$, we conclude that $s\le t$.
\end{proof}

\begin{proposition}\label{pro: NCCW complexes}
Let $A$ be a C*-algebra that can be written as the inductive limit of 1-dimensional NCCW-complexes. Then the endomorphism of $\Cu(A)$ given by multiplication by $n$ is an order embedding.
\end{proposition}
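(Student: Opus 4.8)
The plan is to reduce, by means of Lemma~\ref{lem: cancellation}, to the case of a single one-dimensional NCCW complex, and then to read off the conclusion from the known description of the Cuntz semigroup of such an algebra. So first I would write $A=\varinjlim(A_k,\rho_k)$ with each $A_k$ a one-dimensional NCCW complex. Since $\Cu$ is a continuous functor by \cite[Theorem~2]{Coward-Elliott-Ivanescu}, we have $\Cu(A)=\varinjlim(\Cu(A_k),\Cu(\rho_k))$ in $\CCu$, and multiplication by $n$ commutes with each connecting morphism $\Cu(\rho_k)$, these being semigroup homomorphisms. By Lemma~\ref{lem: cancellation} it therefore suffices to prove that multiplication by $n$ is an order embedding on $\Cu(A_k)$ for every $k$; in other words, we may assume that $A$ is itself a one-dimensional NCCW complex.

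For such an $A$ I would invoke two facts available from \cite{Robert}: that $A$ has stable rank one, and that $\Cu(A)$ admits a description by lower semicontinuous rank functions. Concretely, $\Cu(A)$ is order-isomorphic to a subsemigroup of a finite product $\prod_i \mathrm{Lsc}(X_i,\overline{\mathbb{Z}_+})$, where each $X_i$ is a compact space of dimension at most one (a point, or an interval), the order on $\Cu(A)$ being the restriction of the pointwise order on the product; moreover, under this identification the class of $a\otimes 1_n$ is carried to $n$ times the function attached to $[a]$, so that multiplication by $n$ on $\Cu(A)$ corresponds to pointwise multiplication by $n$. Now multiplication by $n$ is an order embedding on $\overline{\mathbb{Z}_+}=\Cu(\C)$: if $nx\le ny$ with $x,y\in\overline{\mathbb{Z}_+}$, then $x\le y$ --- clear when $x$ is finite, and if $x=\infty$ then $ny=\infty$ forces $y=\infty$. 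Hence multiplication by $n$ is an order embedding pointwise on each $\mathrm{Lsc}(X_i,\overline{\mathbb{Z}_+})$, hence on the product, and therefore on the subsemigroup $\Cu(A)$. This establishes Proposition~\ref{pro: NCCW complexes}.

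The step I expect to be the crux is the use of the rank-function description of $\Cu(A)$ \emph{together with the induced order}: it is precisely the one-dimensionality of $A$ that rules out perforation and forces $[a]\le[b]$ in $\Cu(A)$ to be equivalent to the pointwise comparison of the associated functions (an equivalence that already fails for $C(X)$ with $\dim X$ large, and for general AH algebras). A more self-contained alternative would use stable rank one directly: given $n[a]\le n[b]$, for each $\varepsilon>0$ one conjugates $(a-\varepsilon)_+\otimes 1_n$ by a unitary into the hereditary subalgebra $\mathrm{Her}(b\otimes 1_n)$ and then works inside $\mathrm{Her}(b)$. The difficulty with this route is that it only yields that $n[(a-\varepsilon)_+]$ lies in the image of $\Cu(\mathrm{Her}(b))\hookrightarrow\Cu(A)$; passing from there to $[(a-\varepsilon)_+]\le[b]$ seems again to require the low-dimensional structure of $A$, which is why the structure-theoretic route appears preferable.
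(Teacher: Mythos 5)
Your argument is correct and follows essentially the same route as the paper's proof: reduce via Lemma~\ref{lem: cancellation} to a single one-dimensional NCCW complex, then use the concrete realization of its Cuntz semigroup as a subsemigroup (with the induced pointwise order) of a product of copies of $\mathrm{Lsc}([0,1],\overline{\mathbb{Z}_+})$ and $\overline{\mathbb{Z}_+}$, on which multiplication by $n$ is visibly an order embedding. The only cosmetic difference is the reference: the paper cites \cite[Example 4.2]{Antoine-Perera-Santiago} for this description rather than \cite{Robert}, and your closing remarks correctly identify the pointwise-order identification as the crux.
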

\begin{proof}
By Lemma \ref{lem: cancellation}, it is sufficient to show that the proposition holds when $A$ is a 1-dimensional NCCW-complex. Let $E=\bigoplus_{j=1}^r\M_{k_j}(\C)$ and $F=\bigoplus_{j=1}^s\M_{l_j}(\C)$ be finite dimensional C*-algebras, and for $x\in [0,1]$ denote by $\mathrm{ev}_x\colon \mathrm{C}([0,1], F)\to F$ the evaluation map at the point $x$. Assume that $A$ is given by the pullback decomposition
\begin{equation*}
\xymatrix{A \ar[d] \ar[rr] & & E\ar[d] \\ \mathrm C([0,1], F)\ar[rr]_-{\mathrm{ev}_0\oplus \mathrm{ev}_1} & & F\oplus F,}
\end{equation*}
By \cite[Example 4.2]{Antoine-Perera-Santiago}, the Cuntz semigroup of $A$ is order-isomorphic to a subsemigroup of
$$\mathrm{Lsc}\left([0,1], \overline{\mathbb Z_+}^s\right)\oplus (\overline{\mathbb Z_+})^r.$$
Since multiplication by $n$ on this semigroup is an order embedding, the same holds for any subsemigroup;
in particular, it hold for $\Cu(A)$.
\end{proof}

\begin{corollary}\label{cor: Minfty absorbing}
Let $A$ be a C*-algebra in one of the following classes: unital algebras that can written as inductive limits 1-dimensional NCCW-complexes with trivial $\mathrm K_1$-groups; simple algebras with trivial $\mathrm K_0$-groups that can be written as inductive limits 1-dimensional NCCW-complexes with trivial $\mathrm K_1$-groups; and algebras that can written as inductive limits of punctured-tree algebras. Let $n\in \N$.
Suppose that the map multiplication by $n$ on $\Cu(A)$ is an order-isomorphism. Then $A\cong A\otimes \M_{n^\infty}$.
\end{corollary}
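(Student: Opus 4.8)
The plan is to exhibit the isomorphism through the canonical *-homomorphism $\lambda\colon A\to A\otimes\M_{n^\infty}$, $\lambda(a)=a\otimes 1_{\M_{n^\infty}}$, and then to invoke the classification of *-homomorphisms available for each of the three classes—Robert's theorem \cite{Robert} for the first two, and \cite{Ciuperca-Elliott-Santiago} for the third—together with Elliott's two-sided intertwining argument. The starting observation is that the hypothesis on $A$ is precisely that $\Cu(A)$ is uniquely $n$-divisible in the sense of Definition~\ref{df: uniquely n div}, and that multiplication by $n$ is then not merely an order-preserving semigroup isomorphism but an isomorphism in the category $\CCu$: a monotone bijection whose inverse is monotone automatically preserves suprema of increasing sequences and both preserves and reflects the relation $\ll$ (here surjectivity of multiplication by $n$ is what lets one divide an arbitrary increasing sequence in order to test $\ll$).

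Next I would identify $\Cu(\lambda)$. Writing $A\otimes\M_{n^\infty}=\varinjlim(\M_{n^k}(A),j_k)$ with $j_k(x)=x\otimes 1_n$, the map $\lambda$ is exactly the canonical map into this limit from the first building block $A=\M_{n^0}(A)$. Reindexing through the corner embeddings $i_k\colon A\to\M_{n^k}(A)$ (which induce $\Cu$-isomorphisms by \cite{Coward-Elliott-Ivanescu}) and using Lemma~\ref{lem: n} exactly as in the proof of Theorem~\ref{Ctz smgp and unique n divis}, the inductive system computing $\Cu(A\otimes\M_{n^\infty})$ becomes $\Cu(A)\xrightarrow{\times n}\Cu(A)\xrightarrow{\times n}\Cu(A)\to\cdots$, with $\Cu(\lambda)$ the structure map out of the zeroth stage. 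Since every connecting map $\times n$ is a $\CCu$-isomorphism by the first paragraph, the limit is isomorphic to $\Cu(A)$ via its structure maps, so $\Cu(\lambda)\colon\Cu(A)\to\Cu(A\otimes\M_{n^\infty})$ is an order-isomorphism. Moreover $\lambda$ carries a strictly positive element $s_A$ of $A$ to $s_A\otimes 1_{\M_{n^\infty}}$, which is strictly positive in $A\otimes\M_{n^\infty}$; hence $\Cu(\lambda)$ respects the distinguished element of the invariant (and sends $[1_A]$ to $[1_{A\otimes\M_{n^\infty}}]$ when $A$ is unital).

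It then remains to run the classification. For each of the three classes one checks that $A\otimes\M_{n^\infty}$ again belongs to it: $\M_{n^k}(A)$ is a $1$-dimensional NCCW-complex (resp.\ a punctured-tree algebra) whenever $A$ is, and has the same $\mathrm K_1$-group, so the inductive-limit description survives; and in the simple case $\mathrm K_0(A\otimes\M_{n^\infty})=\mathrm K_0(A)\otimes\mathbb{Z}[1/n]=0$. By the ``moreover'' clauses of Theorems~\ref{thm: classification-h-RP-Cutilde} and \ref{classif Rp on NCCW}, on each of these classes the functor $\Cu$ together with the class of a strictly positive element classifies *-homomorphisms; by Theorem~\ref{thm: isomorphism} applied with the trivial group (Elliott's intertwining), it then classifies isomorphisms. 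Applying this to the invariant-isomorphism $\Cu(\lambda)$ produces a *-isomorphism $A\to A\otimes\M_{n^\infty}$.

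I expect the only delicate points to be bookkeeping ones: verifying that the three classes are closed under tensoring with $\M_{n^\infty}$, and making sure one is genuinely entitled to use $\Cu$ in place of $\Cu^\sim$ for both $A$ and $A\otimes\M_{n^\infty}$ at once—this is what the ``moreover'' clauses were recorded for, but it requires knowing that $\Cu$ and $\Cu^\sim$ are naturally equivalent on the class in question, so that $\Cu(\lambda)$ being an isomorphism transfers to $\Cu^\sim(\lambda)$. The Cuntz-semigroup computation itself is essentially the one already carried out in the proof of Theorem~\ref{Ctz smgp and unique n divis}, so no new difficulty arises there.
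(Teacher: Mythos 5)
Your argument is correct, and its first half (unique $n$-divisibility of $\Cu(A)$ implies $\Cu(A)\cong\Cu(A\otimes\M_{n^\infty})$ via the reindexed inductive system with connecting maps equal to multiplication by $n$) is exactly the content of Theorem \ref{Ctz smgp and unique n divis}, which is also where the paper starts. Where you diverge is in the descent from the isomorphism of invariants to an isomorphism of algebras. The paper invokes only the classification \emph{up to stable isomorphism} by the Cuntz semigroup, obtains $A\otimes\K\cong A\otimes\M_{n^\infty}\otimes\K$, and then removes the stabilization by citing Toms--Winter's result that $\M_{n^\infty}$-absorption passes to hereditary subalgebras. You instead realize the invariant isomorphism concretely as $\Cu(\lambda)$ for the first-factor embedding $\lambda(a)=a\otimes 1$, check that it carries the class of a strictly positive element of $A$ to that of a strictly positive element of $A\otimes\M_{n^\infty}$, and apply the unstable classification of isomorphisms (Robert, resp.\ Ciuperca--Elliott--Santiago, plus Elliott intertwining) directly. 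Your route buys independence from the Toms--Winter hereditarity result at the cost of the extra bookkeeping you correctly flag: identifying $\Cu(\lambda)$ with a structure map of the limit system, tracking the distinguished element $[s_A]$, verifying that the three classes are closed under tensoring with $\M_{n^\infty}$ (which the paper's argument also implicitly needs in order to apply stable classification to $A\otimes\M_{n^\infty}$), and transferring between $\Cu$ and $\Cu^\sim$ on these classes. All of these checks go through as you describe, so both proofs are valid; yours is marginally more self-contained within the paper's own toolkit, while the paper's is shorter on the page.
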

\begin{proof}
By Proposition \ref{pro: NCCW complexes} together with the assumptions in the statement, it follows that the map multiplication by $n$ on $\Cu(A)$ is an order-isomorphism. Hence, it is an isomorphism in the category $\Cu$. By part (ii) of Theorem \ref{Ctz smgp and unique n divis}, there is an isomorphism $\Cu(A)\cong \Cu(A\otimes \M_{n^\infty})$ in $\CCu$.

The same arguments used at the end of the proof of Theorem \ref{thm: mainclassification} show that the classes of C*-algebras in the statement can be classified up to stable isomorphism by their Cuntz semigroup. Therefore, we deduce that
$$A\otimes \K\cong A\otimes \M_{n^\infty}\otimes \K.$$
Using that $\M_{n^\infty}$-absorption is inherited by hereditary C*-subalgebras (\cite[Corollary 3.1]{Toms-Winter}), we conclude that $A\cong A\otimes \M_{n^\infty}$.
\end{proof}

\subsection{Absorption of the model action}

We now proceed to obtain an equivariant UHF-absorption result (compare with \cite[Theorems 3.4 and 3.5]{Izumi-II}).

\begin{theorem}\label{thm: Rp action absorbs model action}
Let $G$ be a finite group and let $A$ be a C*-algebra belonging to one of the classes of C*-algebras described in Corollary \ref{cor: Minfty absorbing}. Then the following statements are equivalent:
\begin{enumerate} \item The C*-algebra $A$ absorbs the UHF-algebra $\M_{|G|^\infty}$.
\item There is an action $\alpha\colon G\to\Aut(A)$ with the Rokhlin property such that $\Cu(\alpha_g)=\id_{\Cu(A)}$ for all $g\in G$.
\item There are actions of $G$ on $A$ with the Rokhlin property, and for any action $\beta\colon G\to\Aut(A)$ with the Rokhlin property and for any action $\delta\colon G\to\Aut(A)$ such that $\Cu(\beta_g)=\Cu(\delta_g)$ for all $g\in G$, one has
    $$(A,\beta)\cong (A\otimes \M_{|G|^\infty},\delta\otimes\mu^G),$$
    that is, there is an isomorphism $\varphi\colon A\to A\otimes \M_{|G|^\infty}$ such that
$$\varphi\circ\beta_g=(\delta\otimes\mu^G)_g\circ\varphi$$
for all $g$ in $G$.\end{enumerate}

In particular, if the above statements hold for $A$, and if $\alpha\colon G\to\Aut(A)$ is an action with the Rokhlin property such that $\Cu(\alpha_g)=\id_{\Cu(A)}$ for all $g\in G$, then $(A,\alpha)\cong (A\otimes \M_{|G|^\infty},\id_A\otimes\mu^G)$.\end{theorem}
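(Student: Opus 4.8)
Write $n=|G|$. The plan is to establish the cyclic implications $(1)\Rightarrow(2)\Rightarrow(1)$ and $(1)\Rightarrow(3)\Rightarrow(1)$, and then to obtain the final assertion of the theorem as a particular case of $(1)\Rightarrow(3)$.

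For $(1)\Rightarrow(2)$, I fix an isomorphism $\varphi\colon A\to A\otimes\M_{n^\infty}$ and let $\alpha$ be the transport of $\id_A\otimes\mu^G$ through $\varphi$; this action has the Rokhlin property by Proposition \ref{Rp properties}, and since $\mu^G_g$ is approximately inner the automorphism $\id_A\otimes\mu^G_g$ is a pointwise--norm limit of conjugations by multiplier unitaries, so $\Cu(\id_A\otimes\mu^G_g)=\id$ and hence $\Cu(\alpha_g)=\id_{\Cu(A)}$ for every $g$. For $(2)\Rightarrow(1)$, multiplication by $n$ on $\Cu(A)$ is an order embedding by Proposition \ref{pro: NCCW complexes}, and because $\Cu(\alpha_g)=\id$ for all $g$, Corollary \ref{cor: n-divisible} promotes it to an order-isomorphism; Corollary \ref{cor: Minfty absorbing} then yields $A\cong A\otimes\M_{n^\infty}$. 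For $(3)\Rightarrow(1)$, I pick a Rokhlin action $\beta$ of $G$ on $A$ (one exists by the first clause of (3)) and apply the absorption statement in (3) with $\delta=\beta$; the resulting equivariant isomorphism $(A,\beta)\cong(A\otimes\M_{n^\infty},\beta\otimes\mu^G)$ gives in particular $A\cong A\otimes\M_{n^\infty}$.

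The main content is $(1)\Rightarrow(3)$. Existence of Rokhlin actions on $A$ follows once more by transporting $\id_A\otimes\mu^G$ through an isomorphism $A\cong A\otimes\M_{n^\infty}$. For the absorption clause, observe that (1) forces $\Cu(A)\cong\Cu(A\otimes\M_{n^\infty})$, so $\Cu(A)$ is uniquely $n$-divisible by Theorem \ref{Ctz smgp and unique n divis}; more precisely, the proof of that theorem shows this isomorphism is the canonical map $\nu\colon\Cu(A)\to\Cu(A\otimes\M_{n^\infty})$, $[a]\mapsto[a\otimes 1_{\M_{n^\infty}}]$, induced by the first-factor inclusion. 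Since $s_A\otimes 1$ is strictly positive in $A\otimes\M_{n^\infty}$ whenever $s_A$ is strictly positive in $A$, and any two strictly positive elements are Cuntz equivalent, $\nu$ sends the distinguished class of $A$ to that of $A\otimes\M_{n^\infty}$. Now let $\beta$ be a Rokhlin action of $G$ on $A$ and let $\delta$ be an action of $G$ on $A$ with $\Cu(\beta_g)=\Cu(\delta_g)$ for all $g$; then $\delta\otimes\mu^G$ has the Rokhlin property by Proposition \ref{Rp properties}, and using that $\mu^G_g$ fixes the unit together with $\Cu(\beta_g)=\Cu(\delta_g)$ one checks that $\nu\circ\Cu(\beta_g)=\Cu((\delta\otimes\mu^G)_g)\circ\nu$ for all $g$, that is, $\nu$ intertwines the induced $G$-actions on the two Cuntz semigroups. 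Since $A\otimes\M_{n^\infty}$ belongs to the same class as $A$ (which is straightforward in each of the three cases), part (i) of Theorem \ref{classif Rp on NCCW}, in the version with the functor $\Cu$, applies to $\nu$ and produces an equivariant isomorphism $(A,\beta)\cong(A\otimes\M_{n^\infty},\delta\otimes\mu^G)$, completing $(1)\Rightarrow(3)$. Finally, the last assertion of the theorem is the special case obtained by taking $\beta=\alpha$ and $\delta$ the trivial action $g\mapsto\id_A$: the hypothesis $\Cu(\alpha_g)=\id_{\Cu(A)}$ says exactly $\Cu(\alpha_g)=\Cu((\id_A)_g)$, so (3) gives $(A,\alpha)\cong(A\otimes\M_{n^\infty},\id_A\otimes\mu^G)$.

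The step I expect to demand the most care is the identification, through the proof of Theorem \ref{Ctz smgp and unique n divis}, of the isomorphism $\Cu(A)\cong\Cu(A\otimes\M_{n^\infty})$ with the concrete inclusion-induced map $\nu$; it is precisely the fact that $\nu$ comes from the unital embedding $a\mapsto a\otimes 1$ that lets one read off simultaneously its equivariance (because $\mu^G_g$ fixes the unit) and its normalization. Everything else is a matter of assembling results already in hand, together with the (routine) verification that each of the three classes in Corollary \ref{cor: Minfty absorbing} is stable under tensoring by $\M_{n^\infty}$.
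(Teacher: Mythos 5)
Your proposal is correct and follows essentially the same route as the paper: the only point of divergence is in $(1)\Rightarrow(3)$, where the paper obtains the equivariant $\CCu$-isomorphism by invoking strong self-absorption of $\M_{|G|^\infty}$ to produce a *-isomorphism $\phi\colon A\to A\otimes\M_{|G|^\infty}$ approximately unitarily equivalent to the first-factor embedding $\iota$ (so that $\Cu(\phi)=\Cu(\iota)$ is automatically an isomorphism), whereas you read off that $\Cu(\iota)$ is an isomorphism directly from the inductive-limit computation in the proof of Theorem \ref{Ctz smgp and unique n divis} together with unique $n$-divisibility. Both justifications are valid, and the remaining steps --- equivariance of the inclusion-induced map via $\mu^G_g(1)=1$ and $\Cu(\beta_g)=\Cu(\delta_g)$, the normalization at strictly positive elements, and the appeal to Theorem \ref{classif Rp on NCCW}(i) --- coincide with the paper's.
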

\begin{proof}
(i) implies (ii). Fix an isomorphism $\varphi\colon A\to A\otimes \M_{|G|^{\infty}}$ and define an action $\alpha\colon G\to\Aut(A)$ by $\alpha_g=\varphi^{-1}\circ(\id_A\otimes\mu^G)_g\circ\varphi$ for all $g$ in $G$. For a fixed group element $g$ in $G$, the automorphism $\id_A\otimes\mu^G_g$ of $A\otimes\M_{|G|^\infty}$ is approximately inner, and hence so is $\alpha_g$. It follows that $\Cu(\alpha_g)=\id_{\Cu(A)}$ for all $g$ in $G$, as desired.

(ii) implies (i). Assume that there is an action $\alpha\colon G\to\Aut(A)$ with the Rokhlin property such that $\Cu(\alpha_g)=\id_{\Cu(A)}$ for all $g\in G$. Then $A\cong A\otimes \M_{|G|^\infty}$ by Proposition \ref{pro: NCCW complexes}, Corollary \ref{cor: Minfty absorbing} and Corollary \ref{cor: n-divisible}.

(i) and (ii) imply (iii). Let $\beta$ and $\delta$ be actions of $G$ on $A$ as in the statement. Since $\M_{|G|^{\infty}}$ is a strongly self-absorbing algebra, there exists an isomorphism $\phi\colon A\to A\otimes \M_{|G|^{\infty}}$ that is approximately unitarily equivalent to the map $\iota\colon A\to A\otimes \M_{|G|^{\infty}}$ given by $\iota(a)=a\otimes 1_{\M_{|G|^{\infty}}}$ for $a$ in $A$. In particular, one has $\Cu(\phi)=\Cu(\iota)$. Hence, for every $a\in (A\otimes\K)_+$ we have
$$(\Cu(\phi)\circ \Cu(\beta_g))([a])=\Cu(\iota)[(\beta_g\otimes \id_{\K})(a)]=\left[((\beta_g\otimes \id_{\K})(a))\otimes 1_{\M_{|G|^\infty}}\right]$$
and
\begin{align*}
(\Cu(\delta_g\otimes \mu^G)\circ\Cu(\phi))([a])&=\Cu(\delta_g\otimes \mu^G)\left(\left[a\otimes 1_{\M_{|G|^\infty}}\right]\right)\\
&=\left[((\delta_g\otimes \id_{\K})(a))\otimes 1_{\M_{|G|^\infty}}\right].
\end{align*}
Since $\Cu(\beta_g)=\Cu(\delta_g)$ for all $g\in G$, it follows that
$$\Cu(\phi)\circ \Cu(\beta_g)=\Cu(\delta_g\otimes \mu_g)\circ\Cu(\phi)$$
for all $g$ in $G$. In other words, the $\CCu$-isomorphism $\Cu(\phi)\colon \Cu(A)\to \Cu(A\otimes \M_{|G|^\infty})$ is equivariant. Therefore, by the unital case of Theorem \ref{classif Rp on NCCW}, there exists an isomorphism $\varphi\colon A\to A \otimes \M_{|G|^\infty}$ such that $\varphi\circ\beta_g=(\delta\otimes\mu^G)_g\circ\varphi$ for all $g\in G$, showing that $\beta$ and $\delta\otimes\mu^G$ are conjugate.

(iii) implies (i). The existence of an action $\beta\colon G\to\Aut(A)$ with the Rokhlin property implies the existence of an isomorphism $A\to A\otimes \M_{|G|^\infty}$, simply by taking $\delta=\beta$.

The last claim follows immediately from (iii).
\end{proof}

\begin{bibdiv}
\begin{biblist}

\bib{APT}{article}{
   author={Antoine, R.},
   author={Perera, F.},
   author={Thiel, H.},
   title={Tensor products and regularity properties of Cuntz semigroups},
   journal={Preprint, arXiv:1410.0483},
   volume={},
   date={2014},
   number={},
   pages={},
}

\bib{Antoine-Perera-Santiago}{article}{
   author={Antoine, R.},
   author={Perera, F.},
   author={Santiago, L.},
   title={Pullbacks, $C(X)$-algebras, and their Cuntz semigroup},
   journal={J. Funct. Anal.},
   volume={260},
   date={2011},
   number={10},
   pages={2844--2880},
}

\bib{Blackadar-book}{article}{
   author={Blackadar, B.},
   title={$K$-Theory for Operator Algebras},
   journal={MSRI publications, Second Edition},
   volume={5},
   date={1998},
}

\bib{B-R-T-T-W}{article}{
   author={Blackadar, B.},
   author={Robert, L.},
   author={Tikuisis, A.},
   author={Toms, A.},
   author={Winter, W.},
   title={An algebraic approach to the radius of comparison},
   journal={Trans. Amer. Math. Soc.},
   volume={364},
   date={2012},
   number={7},
   pages={3657--3674},
}

\bib{Brown-Ciuperca}{article}{
   author={Brown, N.},
   author={Ciuperca, A.},
   title={Isomorphism of Hilbert modules over stably finite $C\sp
   *$-algebras},
   journal={J. Funct. Anal.},
   volume={257},
   date={2009},
   number={1},
   pages={332--339},
}

\bib{Ciuperca-Elliott-Santiago}{article}{
   author={Ciuperca, A.},
   author={Elliott, G.},
   author={Santiago, L.},
   title={On inductive limits of type-I $C\sp *$-algebras with
   one-dimensional spectrum},
   journal={Int. Math. Res. Not. IMRN},
   date={2011},
   number={11},
   pages={2577--2615},
}


\bib{Connes-I}{article}{
   author={Connes, A.},
   title={Periodic Automorphisms of the hyperfinite factor of type II$_1$},
   journal={Acta. Sci. Math.},
   volume={39},
   date={1977},
   pages={39--66},
}

\bib{Connes-II}{article}{
   author={Connes, A.},
   title={Outer conjugacy classes of automorphisms of factors},
   journal={Ann. Sci. Ecole Norm. Sup.},
   volume={8},
   date={1975},
   pages={383--420},
}

\bib{Coward-Elliott-Ivanescu}{article}{
   author={Coward, K.},
   author={Elliott, G.},
   author={Ivanescu, C.},
   title={The Cuntz semigroup as an invariant for C*-algebras},
   journal={J. Reine Angew. Math.},
   volume={623},
   date={2008},
   pages={161--193},
}

\bib{Dadarlat-Loring}{article}{
   author={Dadarlat, M.},
   author={Loring, T.},
   title={A universal multicoefficient theorem for the Kasparov groups},
   journal={Duke Math. J.},
   volume={84},
   date={1996},
   number={2},
   pages={355--377},
}

\bib{Elliott}{article}{
   author={Elliott, G.},
   title={Towards a theory of classification},
   journal={Adv. Math.},
   volume={223},
   date={2010},
   number={1},
   pages={30--48},
}

\bib{Elliott-Robert-Santiago}{article}{
   author={Elliott, G.},
   author={Robert, L.},
   author={Santiago, L.},
   title={The cone of lower semicontinuous traces on a $C\sp *$-algebra},
   journal={Amer. J. Math.},
   volume={133},
   date={2011},
   number={4},
   pages={969--1005},
}

\bib{Elliott-Su}{article}{
   author={Elliott, G.},
   author={Su, H.},
   title={$K$-theoretic classification for inductive limit $\Z_2$-actions on AF-algebras},
   journal={Canad. J. Math.},
   volume={48},
   date={1996},
   number={5},
   pages={946--958},
}

\bib{Fack-Marechal-I}{article}{
   author={Fack, T.},
   author={Mar\'echal, O.},
   title={Sur la classification des automorphismes p\'eriodiques des C*-alg\`ebres UHF},
   journal={J. Funct. Anal.},
   volume={40},
   date={1981},
   pages={265--301},
}

\bib{Fack-Marechal-II}{article}{
   author={Fack, T.},
   author={Mar\'echal, O.},
   title={Sur la classification des symetries des C*-alg\`ebres UHF},
   journal={Canad. J. Math.},
   volume={31},
   date={1979},
   pages={496--523},
   issn={0008-4141},
}

\bib{Gong}{article}{
   author={Gong, G.},
   title={On the classification of simple inductive limit $C\sp *$-algebras.
   I. The reduction theorem},
   journal={Doc. Math.},
   volume={7},
   date={2002},
   pages={255--461 (electronic)},
}

\bib{Handelman-Rossmann}{article}{
   author={Handelman, D.}
   author={Rossmann, W.},
   title={Actions of compact groups on AF C*-algebras},
   journal={Illinois J. Math.},
   volume={29},
   date={1985},
   number={51-95},
   pages={},
   review={\MR{0769758}},
}

\bib{Herman-Ocneanu}{article}{
   author={Herman, R.},
   author={Ocneanu, A.},
   title={Stability for Integer Actions on UHF C*-algebras},
   journal={J. Funct. Anal.},
   volume={59},
   date={1984},
   pages={132--144},
}

\bib{Huaxin}{article}{
   author={Lin, H.},
   title={Homomorphisms from AH-algebras},
   journal={Preprint, arXiv:1102.4631},
   volume={},
   date={2013},
   number={},
   pages={},
}

\bib{Izumi-I}{article}{
   author={Izumi, M.},
   title={Finite group actions on C*-algebras with the Rohlin
   property. I},
   journal={Duke Math. J.},
   volume={122},
   date={2004},
   number={2},
   pages={233--280},
}

\bib{Izumi-II}{article}{
   author={Izumi, M.},
   title={Finite group actions on C*-algebras with the Rohlin
   property. II},
   journal={Adv. Math.},
   volume={184},
   date={2004},
   number={1},
   pages={119--160},
}

\bib{Kirchberg}{article}{
   author={Kirchberg, E.},
   title={Central sequences in $C\sp *$-algebras and strongly purely
   infinite algebras},
   conference={
      title={Operator Algebras: The Abel Symposium 2004},
   },
   book={
      series={Abel Symp.},
      volume={1},
      publisher={Springer},
      place={Berlin},
   },
   date={2006},
   pages={175--231},
}

\bib{Kirchberg-Rordam}{article}{
  author={Kirchberg, E.},
   author={R{\o}rdam, M.},
   title={Infinite non-simple C*-algebras: absorbing the Cuntz
   algebras $\scr O\sb \infty$},
   journal={Adv. Math.},
  volume={167},
  date={2002},
   number={2},
   pages={195--264},
}

\bib{Nawata}{article}{
   author={Nawata, N.},
   title={Finite group actions on certain stably projectionless C*-algebras with the Rohlin property},
   journal={Preprint, arXiv:1308.0429},
   volume={},
   date={2013},
   number={},
   pages={},
}

\bib{Nielsen-Thomsen}{article}{
   author={Nielsen, K.},
   author={Thomsen, K.},
   title={Limits of circle algebras},
   journal={Exposition. Math.},
   volume={14},
   date={1996},
   number={1},
   pages={17--56},
}

\bib{Osaka-Phillips}{article}{
   author={Osaka, H.},
   author={Phillips, N.~C.},
   title={Crossed products by finite group actions with the Rokhlin
   property},
   journal={Math. Z.},
   volume={270},
   date={2012},
   number={1-2},
   pages={19--42},
}

\bib{Phillips-Freeness-of-actions}{article}{
   author={Phillips, N.~C.},
   title={Freeness of actions of finite groups on $C^*$-algebras},
   journal={Contemporary Mathematics},
   volume={503},
   date={2009},
   pages={217--257},

}

\bib{Robert}{article}{
   author={Robert, L.},
   title={Classification of inductive limits of 1-dimensional NCCW
   complexes},
   journal={Adv. Math.},
   volume={231},
   date={2012},
   number={5},
   pages={2802--2836},
}

\bib{Robert-Santiago}{article}{
   author={Robert, L.},
   author={Santiago, L.},
   title={Classification of $C\sp \ast$-homomorphisms from $C\sb 0(0,1]$ to
   a $C\sp \ast$-algebra},
   journal={J. Funct. Anal.},
   volume={258},
   date={2010},
   number={3},
   pages={869--892},
}

\bib{Rordam}{article}{
   author={R{\o}rdam, M.},
   title={On the structure of simple C*-algebras tensored with a
   UHF-algebra. II},
   journal={J. Funct. Anal.},
   volume={107},
   date={1992},
   number={2},
   pages={255--269},
}

\bib{RordamKL}{article}{
   author={R{\o}rdam, M.},
   title={Classification of certain infinite simple $C\sp *$-algebras},
   journal={J. Funct. Anal.},
   volume={131},
   date={1995},
   number={2},
   pages={415--458},
}

\bib{SantiagoRP}{article}{
   author={Santiago, L.},
   title={Crossed product by actions of finite groups with the Rokhlin property},
   journal={Preprint, arXiv:1401.6852},
   volume={},
   date={2014},
   number={},
   pages={},
}

\bib{Tikuisis}{article}{
   author={Tikuisis, A.},
   title={Nuclear dimension, $\mathcal Z$-stability, and algebraic simplicity for stably projectionless C*-algebras},
  journal={Math. Ann.},
   volume={358},
   date={2014},
   number={3-4},
   pages={729--778},
}

\bib{Toms-Winter}{article}{
   author={Toms, A.},
   author={Winter, W.},
   title={Strongly self-absorbing C*-algebras},
   journal={Trans. Amer. Math. Soc.},
   volume={359},
   date={2007},
   number={8},
   pages={3999--4029},
}

\bib{Thomsen}{article}{
   author={Thomsen, K.},
   title={Traces, unitary characters and crossed products by ${\bf Z}$},
   journal={Publ. Res. Inst. Math. Sci.},
   volume={31},
   date={1995},
   number={6},
   pages={1011--1029},
}

\end{biblist}
\end{bibdiv}
\end{document}